\newtheorem{theorem}{Theorem}[section]
\newtheorem{remark}{Remark}[section]
\newtheorem{definition}{Definition}[section]
\newtheorem{lemma}{Lemma}[section]
\newtheorem{corollary}{Corollary}[section]
\newtheorem{proposition}{Proposition}[section]
\DeclareMathOperator{\loc}{loc}
\title{Global well-posedness of the three-dimensional non-isentropic compressible magnetohydrodynamic equations under a scaling-invariant smallness condition
\thanks{Xu's research was partially supported by Postgraduate Research an Innovation Project of Southwest University (No. SWUB25035). Zhong's research was partially supported by National Natural Science Foundation of China (No. 12371227) and Fundamental Research Funds for the Central Universities (No. SWU--KU24001).}
}
\author{Lin Xu,\ Xin Zhong{\thanks{E-mail addresses: mathxu@email.swu.edu.cn (L. Xu), xzhong1014@amss.ac.cn (X. Zhong).}}
\date{}\\
\footnotesize School of Mathematics and Statistics, Southwest University, Chongqing 400715, P. R. China}
\begin{document}
\maketitle
\begin{abstract}
We consider the Cauchy problem of the non-isentropic compressible magnetohydrodynamic equations in $\mathbb{R}^3$ with far-field vacuum. By deriving delicate energy estimates and exploiting the intrinsic structure of the system, we establish the global existence and uniqueness of strong solutions provided that the scaling-invariant quantity
\begin{align*}
(1+\bar{\rho}+\tfrac{1}{\bar{\rho}}) [\|\rho_{0}\|_{L^{3}}+ ( \bar{\rho}^{2}+\bar{\rho})( \| \sqrt{\rho_{0}}u_{0}\|_{L^{2}}^{2}+\| b_{0}\|_{L^{2}}^{2}) ] [\|\nabla u_{0}\|_{L^{2}}^{2}+(\bar{\rho}+1)\|\sqrt{\rho_{0}} \theta_{0}\|_{L^{2}}^{2}+\| \nabla b_{0}\|_{L^{2}}^{2}+\|  b_{0}\|_{L^{4}}^{4} ]
\end{align*}
is sufficiently small, where $\bar{\rho}$ denotes the essential supremum of the initial density. Our result may be regarded as an improved version compared with that of Liu and the second author (J. Differential Equations 336 (2022), pp. 456--478) in the sense that an artificial condition $3\mu>\lambda$ on the viscosity coefficients is removed. In particular, we provide a new scaling-invariant quantity regarding the initial data.
\end{abstract}

\textit{Key words and phrases}. Non-isentropic compressible magnetohydrodynamic equations; global strong solutions; scaling-invariant quantity; vacuum.

2020 \textit{Mathematics Subject Classification}. 35Q35; 76N10; 76W05.


\section{Introduction}
The motion of three-dimensional compressible, viscous, and heat-conducting magnetohydrodynamic (MHD) flows is governed by (see, e.g., \cite[Chapter 3]{LT12})
\begin{align}\label{1}
\begin{cases}
\rho_{t}+\operatorname{div}(\rho u)=0, \\[1mm]
(\rho u)_{t}+\operatorname{div}(\rho u \otimes u)-\mu \Delta u-(\lambda+\mu) \nabla \operatorname{div} u+\nabla P=(\operatorname{curl} b) \times b, \\[1mm]
c_{v} \rho\left(\theta_{t}+u \cdot \nabla \theta\right)+P \operatorname{div} u-\kappa \Delta \theta
=2 \mu|\mathfrak{D}(u)|^{2}+\lambda(\operatorname{div} u)^{2}+\nu|\operatorname{curl} b|^{2}, \\[1mm]
b_{t}-\operatorname{curl}(u \times b)=\nu \Delta b, \\[1mm]
\operatorname{div} b=0,
\end{cases}
\end{align}
where the unknowns $\rho \geq 0$, $u \in \mathbb{R}^{3}$, $\theta \geq 0$, and $b \in \mathbb{R}^{3}$ denote the density, velocity, absolute temperature, and magnetic field, respectively. The pressure $P$ is given by the ideal gas law
\begin{align*}
P = R \rho \theta,
\end{align*}
with the gas constant $R>0$. The deformation tensor $\mathfrak{D}(u)$ is defined by
\begin{align*}
\mathfrak{D}(u)=\frac{1}{2}\big(\nabla u+(\nabla u)^{\top}\big),
\end{align*}
where $(\nabla u)^{\top}$ denotes the transpose of $\nabla u$. The positive constants $c_{v}$, $\kappa$, and $\nu$ stand for the specific heat at constant volume, the heat conductivity coefficient, and the magnetic diffusivity, respectively. The viscosity coefficients $\mu$ and $\lambda$ are assumed to satisfy the physical restrictions
\begin{align*}
\mu>0, \quad 2 \mu+3 \lambda \geq 0.
\end{align*}

We consider the Cauchy problem of system \eqref{1} in $\mathbb{R}^{3} \times (0,T)$ with the initial condition
\begin{align}\label{4}
(\rho, u, \theta, b)(x, 0)=(\rho_{0}, u_{0}, \theta_{0}, b_{0})(x), \quad x \in \mathbb{R}^{3},
\end{align}
and the far-field behavior
\begin{align}\label{5}
(\rho, u, \theta, b)(x, t) \rightarrow (0,0,0,0)
\quad \text{as } |x| \rightarrow \infty,\ t>0.
\end{align}

The non-isentropic compressible MHD equations describe the motion of electrically conducting fluids such as plasmas, liquid metals, and electrolytes. They consist of a coupled system of non-isentropic compressible Navier--Stokes equations of fluid dynamics and Maxwell equations of electromagnetism. The system is not merely a superposition of fluid and magnetic field equations, but an intrinsically coupled model in which fluid motion and electromagnetic effects interact in a highly nonlinear way. This coupling leads to a rich structure and makes the analytical study particularly challenging.

There is huge literature on the studies of the non-isentropic compressible MHD equations, such as the global existence of weak solutions \cite{DF06,HW08,LG14}, low Mach number limits \cite{JJL12,JJLX14}, blow-up criteria of solutions \cite{HL13,W21,Z19}, and so on. It is also of great interest to study the global well-posedness of strong solutions to the system \eqref{1}. By energy method, Pu and Guo \cite{PG13} showed the global existence of smooth solutions when the initial data are close to a constant equilibrium in $H^{3}(\mathbb{R}^3)$. Moreover, they obtained the convergence rates of the $L^p$-norm of such solutions to the constant state when the $L^q$-norm of the perturbation is bounded. With the help of the $L^p$-theory of elliptic equations, the global well-posedness of strong solutions with vacuum to the Cauchy problem \eqref{1}--\eqref{5} was established in \cite{LZ20} provided that $3\mu>\lambda$ and $\|\rho_{0}\|_{L^{\infty}}+\|b_{0}\|_{L^{3}}$ is sufficiently small. Such a result was later generalized in \cite{HJP22} by removing an artificial restriction $3\mu>\lambda$ and allowing large oscillation as the smallness condition is imposed on $\|\rho_{0}\|_{L^{1}}+\|b_{0}\|_{L^{2}}$.
More recently, applying initial layer analysis and bootstrap arguments,
Liu and the second author \cite{LZ25} established the global well-posedness and algebraic decay rates of strong solutions to \eqref{1}--\eqref{5} provided that the initial data are of small total energy. Furthermore, the uniform boundedness of the entropy for \eqref{1} was obtained in \cite{LZ23} provided that the initial density decays suitably slow at infinity with the aid of modified De Giorgi type iteration techniques and singularly weighted estimates. Apart from the Cauchy problem with vacuum \cite{LZ20,HJP22,LZ25},
several authors dealt with the initial-boundary value problem of \eqref{1}. Under the condition that the initial data are of small energy,
Chen--Peng--Wang \cite{CPW23} derived the global well-posedness and large-time behavior of strong solutions in 3D exterior domains with Navier-slip boundary conditions for the velocity and magnetic field.
For the regular initial data with small energy, the global existence of classical and weak solutions as well as the exponential decay rate in 3D bounded domains with Navier-slip boundary conditions was proven in \cite{CCW26}, while it was shown in \cite{XYZ25} the global well-posedness and the exponential decay estimates of strong solutions in 3D bounded domains subject to Dirichlet boundary conditions for the velocity and temperature.

It should be noted that all results in \cite{CCW26,CPW23,XYZ25,LZ20,HJP22,LZ25} need some smallness condition depending on the initial data. Recently, under the additional restriction $3\mu>\lambda$ on the viscosity coefficients, Liu and the second author \cite{LZ22} demonstrated the global existence of strong solutions to the Cauchy problem \eqref{1}--\eqref{5} provided that
\begin{align}\label{sca111}
	\hat{\rho} \big[ \|\rho_{0} \|_{L^{3}}+\hat{\rho}^{2} ( \|\sqrt{\rho_{0}} u_{0} \|_{L^{2}}^{2}+ \|b_{0} \|_{L^{2}}^{2} ) \big]
	\big[ \|\nabla u_{0} \|_{L^{2}}^{2}+\hat{\rho} ( \|\sqrt{\rho_{0}} E_{0} \|_{L^{2}}^{2}+ \|\nabla b_{0} \|_{L^{2}}^{2} ) \big]
\end{align}
is properly small, where $E_{0}=\frac{\left|u_{0}\right|^{2}}{2}+c_{v} \theta_{0}$ and $\hat{\rho}=\left\|\rho_{0}\right\|_{L^{\infty}}+1$.
In particularly, {\it the smallness condition is independent of any initial data and the quantity in \eqref{sca111} is invariant} under the parabolic-type scaling
\begin{align}\label{sca}
\rho_{\tau}(x, t)\triangleq\rho(\tau x, \tau^{2} t),\
u_{\tau}(x, t)\triangleq\tau\,u(\tau x, \tau^{2} t),\
\theta_{\tau}(x, t)\triangleq\tau^{2}\,\theta(\tau x, \tau^{2} t),\
b_{\tau}(x, t)\triangleq\tau\,b(\tau x, \tau^{2} t),
\end{align}
for any $\tau>0$. The additional condition $3\mu>\lambda$ is mainly used to get the weighted spatial $L^4$-estimate of the velocity, which plays a crucial role in their analysis. However, from the viewpoint of physics, such a restriction seems unnatural as several fluids have bulk viscosities which are hundreds or thousands of times larger than their shear viscosities (see, e.g., \cite{C11}). Thus, removing artificial assumptions on the viscosity coefficients is not only a mathematical issue, but also physically relevant. This gives us the motivation to improve the result obtained in \cite{LZ22}. More precisely, under some smallness condition on a scaling-invariant quantity, we shall establish global strong solutions to the Cauchy problem \eqref{1}--\eqref{5} without any additional constraint on the viscosity coefficients.

Before stating our main result precisely, we describe the notation throughout the paper. For an integer $k \geq 0$ and $1 \leq q \leq \infty$, the standard Sobolev spaces are defined as
\begin{gather*}
D^{k,q}(\mathbb{R}^{3}) = \{ u \in L^1_{\loc}(\mathbb{R}^{3}) \mid \|\nabla^k u\|_{L^q} < \infty \}, \ \|u\|_{D^{k,q}} = \|\nabla^k u\|_{L^q},\ \|  u \|_{L^{q}}=\|u \|_{L^{q}(\mathbb{R}^{3})}, \\
W^{k,q}(\mathbb{R}^{3}) = L^q(\mathbb{R}^{3}) \cap D^{k,q}(\mathbb{R}^{3}), \ D^k(\mathbb{R}^{3}) = D^{k,2}(\mathbb{R}^{3}), \ H^k(\mathbb{R}^{3}) = W^{k,2}(\mathbb{R}^{3}).
\end{gather*}

Next, we give the definition of the strong solution to \eqref{1}--\eqref{5}.
\begin{definition}
(Strong solution) For $T>0$, $(\rho\geq0, u, \theta\geq0, b)$ is called a strong solution to the problem \eqref{1}--\eqref{5} in $\mathbb{R}^{3} \times[0, T]$, if for some $q \in(3,6]$,
\begin{align*}
\begin{cases}
\rho \in C ([0, T] ; W^{1, q} \cap H^{1} ),~~ \rho_{t} \in C ([0, T] ; L^{2} \cap L^{q} ), \\
(u, \theta, b) \in C ([0, T] ; D^{2} \cap D_{0}^{1} ) \cap L^{2} ([0, T] ; D^{2, q} ),\\
(u_{t}, \theta_{t}, b_{t} ) \in L^{2} ([0, T] ; D_{0}^{1} ),~~ (\sqrt{\rho} u_{t}, \sqrt{\rho} \theta_{t}, b_{t} ) \in L^{\infty} ([0, T] ; L^{2} ),
\end{cases}
\end{align*}
and  $(\rho, u, \theta, b)$  satisfies system \eqref{1} a.e. in  $\mathbb{R}^{3} \times(0, T]$.
\end{definition}

Our main result can be stated as follows.
\begin{theorem}\label{thm1}
Let $q\in(3,6]$ and assume that the initial data $(\rho_{0}\ge 0,u_{0},\theta_{0}\ge 0,b_{0})$ satisfies
\begin{align}\label{xz}
\rho_{0} \in H^{1} \cap W^{1,q},~
(\sqrt{\rho_{0}} u_{0}, \sqrt{\rho_{0}} \theta_{0} ) \in L^{2},~
(u_{0}, \theta_{0}) \in D_{0}^{1} \cap D^{2},~
b_{0} \in H^{2},~ \operatorname{div} b_{0}=0,
\end{align}
and the compatibility condition
\begin{align}\label{com}
\begin{cases}
-\mu \Delta u_{0}-(\lambda+\mu) \nabla \operatorname{div} u_{0}
+\nabla\!\left(R \rho_{0} \theta_{0}\right)-\operatorname{curl} b_{0} \times b_{0}=\sqrt{\rho_{0}}\, g_{1}, \\[1mm]
-\kappa \Delta \theta_{0}
-2 \mu|\mathfrak{D}(u_{0})|^{2}
-\lambda(\operatorname{div} u_{0})^{2}
-\nu\left|\operatorname{curl} b_{0}\right|^{2}
=\sqrt{\rho_{0}}\, g_{2},
\end{cases}
\end{align}
for some $g_{1},g_{2} \in L^{2}(\mathbb{R}^{3})$. There exists a positive constant $\varepsilon_{0}$ depending only on the parameters $R, \mu, \lambda, \nu, \kappa$, and $c_{v}$ such that if
\begin{align}\label{sca1}
N_{0}\triangleq \big(1+\bar{\rho}+\tfrac{1}{\bar{\rho}}\big)\,S_{0}^{\prime}S_{0}^{\prime\prime} \leq \varepsilon_{0},
\end{align}
then the Cauchy problem \eqref{1}--\eqref{5} admits a unique global strong solution $(\rho,u,\theta,b)$. Here $\bar{\rho}\triangleq\|\rho_{0} \|_{L^{\infty}}$,
\begin{gather*}
S_{0}^{\prime}\triangleq \|\rho_{0}\|_{L^{3}}+( \bar{\rho}^{2}+\bar{\rho})
\big( \| \sqrt{\rho_{0}}u_{0}\|_{L^{2}}^{2}+\| b_{0}\|_{L^{2}}^{2}\big),\\
S_{0}^{\prime\prime}\triangleq \|\nabla u_{0}\|_{L^{2}}^{2}
+(\bar{\rho}+1)\|\sqrt{\rho_{0}} \theta_{0}\|_{L^{2}}^{2}
+\| \nabla b_{0}\|_{L^{2}}^{2}
+\| b_{0}\|_{L^{4}}^{4}.
\end{gather*}
\end{theorem}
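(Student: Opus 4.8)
The plan is to combine a local existence theorem for strong solutions with a continuation argument driven by a family of \emph{scaling-invariant} a priori estimates. First I would establish (or invoke) short-time existence and uniqueness on a maximal interval $[0,T^*)$, together with the standard blow-up criterion characterizing $T^*$; the whole game is then to show that, under the smallness of $N_0$ in \eqref{sca1}, the norms entering that criterion cannot concentrate, so $T^*=\infty$. To organize this I would introduce two time-dependent functionals mirroring $S_0'$ and $S_0''$: a low-order quantity $S'(t)$ controlling $\|\rho\|_{L^3}$ together with the conserved-type energy $\|\sqrt{\rho}u\|_{L^2}^2+\|b\|_{L^2}^2$, and a first-order quantity $S''(t)$ controlling $\|\nabla u\|_{L^2}^2+(\bar\rho+1)\|\sqrt{\rho}\theta\|_{L^2}^2+\|\nabla b\|_{L^2}^2+\|b\|_{L^4}^4$ plus the associated dissipation integrals. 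The target is a closed system of inequalities in which every nonlinear contribution carries a factor of the scaling-invariant product $S'S''$, so that smallness of $N_0$ forces a self-improving bound.

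Second, I would carry out the estimates in a carefully chosen order. The basic energy balance — testing the momentum equation with $u$, integrating the temperature equation, and testing the induction equation with $b$ — yields uniform control of $\|\sqrt{\rho}u\|_{L^2}^2$, $\int\rho\theta$, and $\|b\|_{L^2}^2$, while the continuity equation gives the evolution of $\|\rho\|_{L^3}$, closed through $\|\rho\|_{L^6}$ and $\|\nabla u\|_{L^2}$. Next comes the first-order layer: testing the momentum equation with $u_t$ produces $\frac{d}{dt}\!\int(\mu|\nabla u|^2+(\mu+\lambda)(\div u)^2)$ against $\|\sqrt{\rho}\,\dot u\|_{L^2}^2$, where I would use that $2\mu+3\lambda\ge0$ already forces $\mu+\lambda>\mu/3>0$, so the viscous quadratic form is coercive \emph{without} any ratio restriction; testing the temperature equation with $\theta$ controls $\|\sqrt{\rho}\theta\|_{L^2}^2$ and the dissipation $\kappa\|\nabla\theta\|_{L^2}^2$; and testing the induction equation with $b$ and with $|b|^2b$ yields the evolution of $\|\nabla b\|_{L^2}^2$ and $\|b\|_{L^4}^4$. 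Gagliardo–Nirenberg interpolation together with the total pressure $P+\tfrac12|b|^2$ routed through the effective viscous flux tie these estimates together.

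The main obstacle — and the precise place where $3\mu>\lambda$ was previously needed — is the $L^4$-type control used to dominate the cross terms coupling $\nabla u$, the pressure, and the magnetic nonlinearities. In \cite{LZ22} one obtains a weighted spatial $L^4$ estimate for the velocity by testing the momentum equation against $|u|^2u$; the viscous contribution $\int(\mu\Delta u+(\mu+\lambda)\nabla\div u)\cdot|u|^2u$ then generates a quadratic form in $\nabla u$ whose coercivity is exactly the origin of the restriction $3\mu>\lambda$. My plan is to avoid testing the momentum equation against $|u|^2u$ altogether and instead shift the $L^4$ burden onto the magnetic field: testing the induction equation against $|b|^2b$, the \emph{full} Laplacian $\nu\Delta b$ produces the coercive weighted form $\nu\int|b|^2|\nabla b|^2$ for free, with no viscosity-ratio constraint — which is precisely why $\|b_0\|_{L^4}^4$ enters $S_0''$. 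The dangerous nonlinear terms, schematically $\int|b|^4|\nabla u|$ from the induction coupling, $R\int\rho\theta^2\div u$ from the pressure work, and the convective and Lorentz contributions, would then be absorbed into the combined dissipation $\mu\|\nabla u\|_{L^2}^2+\kappa\|\nabla\theta\|_{L^2}^2+\nu\|\nabla b\|_{L^2}^2$ via Hölder and Sobolev embedding, each splitting as (low-order factor $S'$)$\times$(first-order dissipation). Smallness of the product $S_0'S_0''$, rather than of the coefficients, is what guarantees absorption.

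Finally, with the first-order bounds secured I would propagate the density upper bound $\|\rho(\cdot,t)\|_{L^\infty}\le C\bar\rho$ by a Zlotnik-type ODE argument along particle trajectories for $\log\rho$, controlling $\int_0^t\|\div u\|_{L^\infty}\,ds$ through the effective viscous flux $F=(2\mu+\lambda)\div u-P-\tfrac12|b|^2$ and its elliptic regularity — an estimate using only $\mu>0$ and $2\mu+3\lambda\ge0$. Using the compatibility condition \eqref{com} to make $\sqrt{\rho}\,\dot u$ and $\sqrt{\rho}\,\dot\theta$ finite at $t=0$, I would then derive the second-level estimates (differentiating the momentum and temperature equations and testing with $\dot u$ and $\dot\theta$) and the companion estimate for $b_t$, yielding $\sqrt{\rho}u_t,\sqrt{\rho}\theta_t,b_t\in L^\infty_tL^2$ and $\nabla\dot u,\nabla\dot\theta,\nabla b_t\in L^2_{t,x}$, together with the $W^{1,q}$-regularity of $\rho$ propagated from the transport equation. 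These bounds both upgrade the solution to the regularity class of the definition of strong solution and keep the blow-up norm finite, so the continuation argument closes to give $T^*=\infty$; uniqueness then follows from a standard $L^2$ energy estimate on the difference of two solutions.
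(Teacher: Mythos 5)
Your overall architecture matches the paper's: local existence plus a bootstrap on scaling-invariant functionals $S'(t),S''(t)$, closure via the Serrin-type blow-up criterion of \cite{HL13}, and --- crucially --- shifting the $L^4$ weighted estimate from the velocity to the magnetic field by testing the induction equation against $|b|^2b$, which is exactly why $\|b_0\|_{L^4}^4$ sits in $S_0''$ and how the restriction $3\mu>\lambda$ is removed. However, there is a genuine gap in your treatment of the low-order layer. You propose to close the evolution of $\|\rho\|_{L^3}$ directly from the continuity equation ``through $\|\rho\|_{L^6}$ and $\|\nabla u\|_{L^2}$.'' That computation gives
\begin{align*}
\frac{d}{dt}\|\rho\|_{L^3}^3=-2\int\rho^3\operatorname{div}u\,dx\le C\|\rho\|_{L^6}^3\|\nabla u\|_{L^2},
\end{align*}
which requires $\int_0^\infty\|\nabla u\|_{L^2}\,dt<\infty$ to integrate globally in time; the energy inequality only furnishes $\int_0^\infty\|\nabla u\|_{L^2}^2\,dt$, so this does not close on an infinite horizon and no smallness of $N_0$ rescues it. The paper's Lemma \ref{lem2} circumvents this by applying $(-\Delta)^{-1}\operatorname{div}$ to the momentum equation, multiplying by $\rho^3$, and using the renormalized continuity equation: the identity $\operatorname{div}u=\frac{1}{2\mu+\lambda}(F+P+\tfrac12|b|^2)$ turns the pressure and magnetic-pressure contributions into the nonnegative damping term $\int\rho^3(P+\tfrac12|b|^2)\,dx$ on the good side, while the remaining terms ($J_1$, $J_2$ and the boundary term $\|(-\Delta)^{-1}\operatorname{div}(\rho u)\|_{L^\infty}$) are quadratic in $(\nabla u,\nabla b)$ and hence time-integrable. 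This device, imported from \cite{L20,W25}, is the decisive ingredient your plan is missing.

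A related, smaller imprecision occurs in your density upper bound: you cannot control $\int_0^t\|\operatorname{div}u\|_{L^\infty}\,ds$ with only energy-level dissipation (that would need $\|\nabla F\|_{L^6}$-type bounds and an $L^\infty$ bound on $P$, neither of which is available). The paper's Corollary \ref{co2} instead exploits the sign $P\ge0$ along characteristics, writes $F$ as a total time derivative of $(-\Delta)^{-1}\operatorname{div}(\rho u)$ plus the Riesz commutator $[u_i,R_{ij}](\rho u_j)$ plus $(-\Delta)^{-1}\operatorname{div}\operatorname{div}(b\otimes b)$, and estimates each piece separately; this decomposition is essential and should be made explicit. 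Finally, your proposed second-level estimates (testing with $\dot u$, $\dot\theta$, $b_t$) are not needed for the continuation step here: the Serrin criterion only requires $\|\rho\|_{L^\infty_{t,x}}+\|u\|_{L^4_tL^6_x}$, both of which follow from the first-order layer, which is the shortcut the paper takes.
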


\begin{remark}
It is straightforward to verify that the quantity $N_{0}$ in \eqref{sca1} is invariant under the scaling transformation \eqref{sca}. More precisely, if $N_{0} \leq \varepsilon_{0}$ holds for some initial data $(\rho_{0},u_{0},\theta_{0},b_{0})$, then, for any $\tau\neq 0$, the rescaled initial data $(\rho_{\tau,0},u_{\tau,0},\theta_{\tau,0},b_{\tau,0})$ defined by \eqref{sca} at $t=0$ also satisfies $N_{0} \leq \varepsilon_{0}$. In particular, our result can be regarded as an improved version compared with that of Liu and the second author \cite{LZ22} in the sense that an artificial condition $3\mu>\lambda$ on the viscosity coefficients is removed. Moreover, \eqref{sca1} and \eqref{sca111} reveal that the scaling-invariant quantity may be not unique even under the same scaling transformation \eqref{sca}.
\end{remark}

\begin{remark}
Let $\tau>0$ and consider a general rescaling unknowns of the form
\begin{align*}
\rho^{\tau}(x,t) \triangleq \tau^{a}\rho(\tau x,\tau^{\gamma} t), \
u^{\tau}(x,t) \triangleq \tau^{b}u(\tau x,\tau^{\gamma} t), \
\theta^{\tau}(x,t) \triangleq \tau^{c}\theta(\tau x,\tau^{\gamma} t), \
b^{\tau}(x,t) \triangleq \tau^{d}b(\tau x,\tau^{\gamma} t),
\end{align*}
where $a,b,c,d$, and $\gamma$ are to be determined. A straightforward computation yields that
\begin{align*}
\gamma = 2,\ a=0,\ b=1,\ c=2,\ d=1.
\end{align*}
That is, \eqref{sca} is a unique nontrivial scaling-invariant transformation for the system \eqref{1}. This is very different from the case of non-isentropic compressible Navier--Stokes equations (see \cite{L20,W25}). Thus, the magnetic field plays a significant role.
\end{remark}

\begin{remark}
It should be noted that Cheng and Dong \cite{C24} obtained the local well-posedness of strong solutions of \eqref{1} without using the compatibility condition \eqref{com} via time-weighted energy estimates. A natural question then appears: is it possible to remove or weaken \eqref{com} in Theorem \ref{thm1}? New ideas are needed to deal with this issue, which will be left for future investigation.	
\end{remark}

We mainly employ bootstrap arguments (see Lemma \ref{boot}) and a blow-up criterion established in \cite{HL13} to prove Theorem \ref{thm1}. Similarly to the previous works on the 3D Cauchy problem \cite{LZ20,HJP22,LZ25,LZ22}, the key issue is to derive the uniform upper bound of the density. However, it should be pointed out that the methods used in the previous works  \cite{LZ20,LZ25,HJP22} cannot be adopted here since their smallness conditions are imposed directly on the initial data and hence are not compatible with the scaling-invariant framework.
Moreover, as stated above, the analysis in \cite{LZ22} depends heavily on the condition $3\mu>\lambda$. Therefore, to obtain the desired \emph{a priori} estimates under the scaling-invariant assumption \eqref{sca1} (without restriction $3\mu>\lambda$), we need some new observations and ideas.

First, motivated by \cite{L20}, we attempt to obtain the useful dissipation estimates from the basic energy inequality in terms of $L_{t}^{\infty}L_{x}^{3}$-norm of the density and $L_{t}^{2}L_{x}^{2}$-norm of $\nabla\theta$ (see \eqref{x0}).
With the help of {\it a priori hypothesis} \eqref{p1} and the structure of system \eqref{1}, we derive an estimate on the $L_{t}^{\infty}L_{x}^{3}$-norm of density (see Lemma \ref{lem2}). The next step is to obtain an estimate on the $L_{t}^{2}L_{x}^{2}$-norm of the gradient of temperature. At this stage, our argument relies on the temperature equation instead of the total energy equation used in \cite{LZ22}. This requires us to control the quantities $\sqrt{\rho}\dot{u}$ and $\nabla^{2}b$ in $L_{t}^{2}L_{x}^{2}$ (see Lemma \ref{lem3}). Multiplying \eqref{1}$_{2}$ by $u_{t}$ and applying delicate energy estimates, the estimate on the $L_{t}^{2}L_{x}^{2}$-norm for $\sqrt{\rho}\dot{u}$ can be obtained. It should be emphasized that, due to the term $\operatorname{curl} b\times b$, the estimate for $\sqrt{\rho}\dot{u}$ generates the complicated quantity $K_{2}(t)$ (see \eqref{xxxx0}). To control all these magnetic effects, one needs appropriate estimates for the magnetic field, which are derived from \eqref{1}$_{4}$ (see Lemma \ref{lem5}).

Next, the key issue is to enclose the bootstrap argument. Based on the estimates derived in Lemmas \ref{lem1}--\ref{lem5}, one obtains the uniform bound of density (via characteristic arguments) and the desired control of the quantity $S_{t}$ provided that the initial scaling-invariant quantity $N_{0}$ is sufficiently small (see Corollaries \ref{co1} and \ref{co2}). In particular, in order to construct a complete scaling-invariant quantity that meets the requirements of the \emph{a priori} estimates and the bootstrap argument, we multiply \eqref{c9} by $(1+\bar{\rho}+\tfrac{1}{\bar{\rho}})$. Moreover, we succeed in establishing the estimates for $\|\rho\|_{L^{3}}$ in \eqref{c3} and $\|\nabla u \|_{L^{2}}^{2}$ in \eqref{c9}. During this process, because of the strong nonlinear coupling between the fluid velocity, temperature, and magnetic field, the key techniques in \cite{W25} cannot be applied in a straightforward manner. A new ingredient in our analysis is the introduction of the quantity
$$
(\bar{\rho}^{2}+\bar{\rho})\sup_{t\in[0,T]}\|\rho(t)\|_{L^{3}}\int_{0}^{T}\|\nabla\theta\|_{L^{2}}^{2} dt+(\bar{\rho}+1)\sup _{t \in[0, T]} \|\nabla b(t)\|_{L^{2}}^{2} \int_{0}^{T}\|\nabla u\|_{L^{2}}^{2} d t
+\sup _{t \in[0, T]}\|\nabla u(t)\|_{L^{2}}^{2}\int_{0}^{T}\|\nabla u\|_{L^{2}}^{2}\, d t,
$$
which is assumed to be sufficiently small (see \eqref{p1}). This structure allows us to completes the bootstrap argument when the condition \eqref{sca1} holds.

The rest of this paper is organized as follows. Some important inequalities and auxiliary lemmas will be given in Section \ref{sec2}. Section \ref{sec3} is devoted to proving Theorem \ref{thm1}.

\section{Preliminaries}\label{sec2}
In this section, some known facts and auxiliary inequalities used frequently later are collected. We begin with the local existence of a unique strong solution to the problem \eqref{1}--\eqref{5} (see \cite{FY09}).
\begin{lemma}\label{local}
Assume that the initial data $(\rho_{0}, u_{0}, \theta_{0}, b_{0})$ satisfies \eqref{xz} and \eqref{com}. Then there exists a positive time $T_{0}>0$ such that the problem \eqref{1}--\eqref{5} admits a unique strong solution in $\mathbb{R}^{3} \times\left(0, T_{0}\right]$.
\end{lemma}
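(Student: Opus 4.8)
The plan is to establish Lemma \ref{local} by a standard linearization and successive-approximation scheme: construct a sequence of solutions to decoupled linear problems, derive uniform estimates on a short time interval $[0,T_0]$ with $T_0$ independent of the iteration index, extract a convergent subsequence by compactness, and then prove uniqueness by an energy argument on the difference of two solutions. The governing difficulty throughout is the vacuum: since $\rho_0$ may vanish (including at spatial infinity via \eqref{5}), the momentum and temperature equations degenerate where $\rho=0$, and one cannot expect $u,\theta\in L^2$. The natural working spaces are therefore the homogeneous spaces $D_0^1\cap D^2$ of the strong-solution definition, supplemented by the weighted quantities $\sqrt{\rho}\,u_t$ and $\sqrt{\rho}\,\theta_t$. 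The compatibility condition \eqref{com} is exactly the device that permits control of these weighted time derivatives at $t=0$: reading \eqref{1} at the initial instant and using \eqref{com} gives $\sqrt{\rho_0}\,\dot u(0)=-g_1\in L^2$ and $c_v\sqrt{\rho_0}\,\dot\theta(0)=-R\sqrt{\rho_0}\,\theta_0\operatorname{div}u_0-g_2\in L^2$, where $\dot{(\cdot)}=(\cdot)_t+u\cdot\nabla(\cdot)$.

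First I would set up the iteration. Starting from the data \eqref{xz}, define $(\rho^k,u^k,\theta^k,b^k)$ by solving, at each step, linear problems driven by the previous iterate. The density solves the linear transport equation $\rho^k_t+\operatorname{div}(\rho^k u^{k-1})=0$ with $\rho^k|_{t=0}=\rho_0$; the velocity solves the linear Lam\'e-type system
\[
\rho^k u^k_t+\rho^k u^{k-1}\cdot\nabla u^k-\mu\Delta u^k-(\lambda+\mu)\nabla\operatorname{div}u^k=-\nabla\!\big(R\rho^k\theta^{k-1}\big)+(\operatorname{curl}b^{k-1})\times b^{k-1},
\]
with $u^k|_{t=0}=u_0$; the temperature $\theta^k$ solves the linear parabolic equation obtained from \eqref{1}$_3$ by freezing the convection and source coefficients at the previous iterate; and the magnetic field solves $b^k_t-\nu\Delta b^k=\operatorname{curl}(u^{k-1}\times b^{k-1})$ with $b^k|_{t=0}=b_0$, which preserves $\operatorname{div}b^k=0$. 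Each linear subproblem is solvable in the required regularity class: the transport equation by characteristics (or a vanishing-viscosity regularization), and the three parabolic-type equations by standard linear theory, with $L^p$-regularity of the Lam\'e operator recovering $\nabla^2u^k\in L^q$.

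The heart of the argument is a family of a priori estimates uniform in $k$ on a common time $T_0$. For the density I would propagate $\|\rho^k\|_{L^\infty\cap L^3}$, $\|\nabla\rho^k\|_{L^2\cap L^q}$, and $\|\rho^k_t\|_{L^2\cap L^q}$ from the transport equation, picking up only factors of $\|\nabla u^{k-1}\|_{W^{1,q}}$. For the velocity I would test the momentum equation against $u^k_t$ to bound $\|\nabla u^k\|_{L^2}$ and $\|\sqrt{\rho^k}\,u^k_t\|_{L^2}$, then differentiate in time and test against $u^k_t$ to bound $\sup_t\|\sqrt{\rho^k}\,u^k_t\|_{L^2}^2+\int_0^{T_0}\|\nabla u^k_t\|_{L^2}^2\,dt$; the initial seed $\|\sqrt{\rho_0}\,u_t(0)\|_{L^2}$ is finite precisely by \eqref{com}$_1$, after which elliptic regularity for the Lam\'e system yields the $L^2_tD^{2,q}$ bound for $u^k$. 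Analogous tests (using \eqref{com}$_2$ as the seed) control $\theta^k$, and parabolic estimates control $b^k$ in $H^2$. Since every nonlinear coupling enters with at least one factor carrying a positive power of $T$ after time integration, the estimates close for $T_0$ sufficiently small by a continuity/bootstrap argument, uniformly in $k$.

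Finally I would pass to the limit and prove uniqueness. From the uniform bounds, Aubin--Lions compactness yields a subsequence converging strongly enough to pass to the limit in every nonlinear term, producing a strong solution in the class of the definition; attainment of the initial data \eqref{4} and the constraint $\operatorname{div}b=0$ survive the limit. (Equivalently, one may show the iterates are Cauchy in a lower-order norm, obtaining convergence and foreshadowing the uniqueness estimate.) For uniqueness I would take two strong solutions with identical data, write the system for the differences, and run a Gronwall estimate in a low-order norm---$L^2$ for the $\sqrt{\rho}$-weighted velocity and temperature differences and $L^2$ for the gradient of the differences---exploiting that the established regularity bounds all coefficients. The main obstacle, as emphasized, is the vacuum degeneracy at the initial time: \eqref{com} is what converts the formal membership $\sqrt{\rho}\,\dot u,\sqrt{\rho}\,\dot\theta\in L^2$ at $t=0$ into genuine higher-order control, while the far-field decay forces all estimates into homogeneous rather than inhomogeneous Sobolev spaces.
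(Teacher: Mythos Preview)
Your sketch is correct in outline and follows the standard linearization--iteration--compactness--uniqueness route for compressible systems with vacuum; the role of the compatibility condition \eqref{com} as the seed for $\|\sqrt{\rho_0}\,u_t(0)\|_{L^2}$ and $\|\sqrt{\rho_0}\,\theta_t(0)\|_{L^2}$ is identified correctly, as is the need to work in homogeneous spaces because of the far-field vacuum.

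However, the paper does \emph{not} actually prove Lemma~\ref{local}: it simply cites the result from Fan and Yu \cite{FY09}. So there is no ``paper's own proof'' to compare against---the lemma is quoted as a black box from the literature. Your proposal amounts to a reconstruction of what \cite{FY09} does, and is consistent with that reference's method. If your aim is merely to supply what the paper supplies, a one-line citation suffices; if your aim is to give a self-contained argument, your sketch is on the right track but would need the detailed estimates (in particular the careful handling of the $L^2_tD^{2,q}$ bound and the closing of the iteration inequalities for small $T_0$) to be a complete proof.
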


Next, we recall the following abstract bootstrap principle (see \cite[Proposition 1.21]{T06}).
\begin{lemma}\label{boot}
Let $I$ be a time interval. For each $t \in I$, suppose that we have two statements, a  hypothesis $\mathbf{H}(t)$ and a conclusion $\mathbf{C}(t)$. Suppose we can verify the following four assertions:
\begin{enumerate}
\item[$(a)$]\, If $\mathbf{H}(t)$ is true for some time $t \in I$, then $\mathbf{C}(t)$ is also true for that time $t$.
\item[$(b)$]\, If $\mathbf{C}(t)$ is true for some $t \in I$, then  $\mathbf{H} (t^{\prime} )$ is true for all $t^{\prime} \in I$ in a neighborhood of $t$.
	\item[$(c)$]\, If $t_{1}$, $t_{2}$, $\ldots$ is a sequence of times in $I$ which converges to another time $t \in I$, and $\mathbf{C}(t_{n})$ is true for all  $t_{n}$, then $\mathbf{C}(t)$ is true.
\item[$(d)$]\, $\mathbf{H}(t)$ is true for at least one time $t \in I$.
\end{enumerate}
Then $\mathbf{C}(t)$ is true for all $t \in I$.
\end{lemma}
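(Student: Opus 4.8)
The plan is to recast the conclusion as a connectedness argument for the interval $I$. I would introduce the set $\Omega \triangleq \{t \in I : \mathbf{C}(t) \text{ holds}\}$ and aim to show $\Omega = I$. The strategy is to verify that $\Omega$ is nonempty, open in $I$, and closed in $I$; since an interval is connected, any nonempty subset that is simultaneously open and closed must be all of $I$. Nonemptiness is immediate: assertion $(d)$ furnishes a point $t_{0} \in I$ with $\mathbf{H}(t_{0})$ true, and then $(a)$ gives $\mathbf{C}(t_{0})$, so $t_{0} \in \Omega$.

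For openness I would take any $t \in \Omega$, so $\mathbf{C}(t)$ holds. By $(b)$ there is a neighborhood of $t$ in $I$ on which $\mathbf{H}(t')$ holds, and by $(a)$ each such $\mathbf{H}(t')$ upgrades to $\mathbf{C}(t')$; hence the whole neighborhood lies in $\Omega$, proving $\Omega$ open in $I$. The structural point worth emphasizing is that the hypothesis $\mathbf{H}$ functions only as a bridge: openness arises precisely from composing $(b)$, which propagates $\mathbf{C}$ into an $\mathbf{H}$-neighborhood, with $(a)$, which converts that $\mathbf{H}$ back into $\mathbf{C}$. For closedness I would apply $(c)$ directly: if $t_{n} \to t$ in $I$ with each $t_{n} \in \Omega$, then $\mathbf{C}(t_{n})$ holds for every $n$, so $\mathbf{C}(t)$ holds and $t \in \Omega$. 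Since $I \subseteq \mathbb{R}$ is a metric space, sequential closedness coincides with closedness, so $\Omega$ is closed in $I$.

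Combining the three properties with the connectedness of $I$ yields $\Omega = I$, which is exactly the assertion that $\mathbf{C}(t)$ holds for all $t \in I$. I do not expect a genuine obstacle in this argument; the only point requiring care is to notice that $(b)$ outputs a neighborhood on which $\mathbf{H}$ (not $\mathbf{C}$) is true, so that $(a)$ must be invoked a second time to close the openness step. This is precisely the reason both $(a)$ and $(b)$ are needed rather than a single propagation hypothesis, and it reflects the usual bootstrap philosophy that one improves a weak qualitative bound $\mathbf{H}$ into a strong quantitative bound $\mathbf{C}$ that in turn reopens the region of validity of $\mathbf{H}$.
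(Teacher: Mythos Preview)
Your argument is correct and is precisely the standard connectedness proof of the abstract bootstrap principle. Note that the paper does not supply its own proof of this lemma; it simply cites \cite[Proposition~1.21]{T06}, whose proof is exactly the open--closed--nonempty argument you outlined.
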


Then, we need the well-known Gagliardo--Nirenberg inequality (see \cite[Theorem 12.87]{book17}).
\begin{lemma}\label{gn}
Let  $1 \leq p$, $q \leq \infty$, $m \in \mathbb{N}$, $k \in \mathbb{N}_{0}$, with  $0 \leq k<m$, and let  $a$, $r$  be such that
\begin{align*}
0 \leq a \leq 1-\frac{k}{m}
\end{align*}
and
\begin{align*}
(1-a)\bigg(\frac{1}{p}-\frac{m-k}{3}\bigg)+a\bigg(\frac{1}{q}+\frac{k}{3}\bigg)=\frac{1}{r} \in(-\infty,1].
\end{align*}
Then there exists a constant  $C=C(m, p, q, a, k)>0$  such that
\begin{align}\label{gn1}
\|\nabla^{k} f \|_{L^{r}} \leq C\|f\|_{L^{q}}^{a} \|\nabla^{m} f \|_{L^{p}}^{1-a}
\end{align}
for every  $f \in L^{q} (\mathbb{R}^{3} ) \cap D^{m, p} (\mathbb{R}^{3} )$, with the following exceptional cases:

$(i)$ If $ k=0$, $m p<3$, and  $q=\infty$, we assume that  $f$  vanishes at infinity.

$(ii)$ If  $1<p<\infty$  and  $m-k-\frac{3}{p}$  is a non-negative integer, then \eqref{gn1} only holds for  $0<a \leq 1-\frac{k}{m}$.
\end{lemma}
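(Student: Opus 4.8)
The plan is to verify first that the exponent relation is exactly the condition of invariance under the dilation $f \mapsto f(\lambda\,\cdot)$, and then to build the inequality \eqref{gn1} out of the basic Sobolev embedding by interpolation and an induction on the order of differentiation. First I would perform the scaling reduction. Setting $f_{\lambda}(x)=f(\lambda x)$ and using $\|\nabla^{k} f_{\lambda}\|_{L^{r}}=\lambda^{k-3/r}\|\nabla^{k} f\|_{L^{r}}$, $\|f_{\lambda}\|_{L^{q}}=\lambda^{-3/q}\|f\|_{L^{q}}$, and $\|\nabla^{m} f_{\lambda}\|_{L^{p}}=\lambda^{m-3/p}\|\nabla^{m} f\|_{L^{p}}$, I would check that the powers of $\lambda$ on the two sides of \eqref{gn1} balance precisely when
\begin{align*}
k-\tfrac{3}{r}=(1-a)\big(m-\tfrac{3}{p}\big)-\tfrac{3a}{q},
\end{align*}
which is equivalent to the stated exponent identity. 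This shows the condition is forced (dimensional necessity) and renders \eqref{gn1} scale invariant, so that no generality is lost in normalizing one of the three norms.

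Next I would establish the first-order case $m=1$, $k=0$. The base sub-case $a=0$, for which $\tfrac1r=\tfrac1p-\tfrac13$ with $1\le p<3$, is the Gagliardo--Nirenberg--Sobolev inequality, which I would prove by the classical argument: express $|f(x)|$ through the fundamental theorem of calculus as an integral of $|\nabla f|$ along each coordinate line, and then combine the three resulting one-variable bounds by the iterated H\"older (Loomis--Whitney) inequality. The general exponent $0<a<1$ in the first-order case then follows by H\"older interpolation (log-convexity of $t\mapsto\|f\|_{L^{1/t}}$) between the critical exponent $p^{\ast}=\tfrac{3p}{3-p}$ and $L^{q}$. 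When $p>3$, so that the target becomes $L^{\infty}$ or a H\"older seminorm, I would instead invoke Morrey's embedding.

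I would then pass to higher order by induction on $m$. The two ingredients are: (1) the same-integrability convexity estimate $\|\nabla^{j} f\|_{L^{p}}\le C\|f\|_{L^{p}}^{1-j/m}\|\nabla^{m} f\|_{L^{p}}^{j/m}$ for $0<j<m$, obtained by iterating the one-dimensional inequality $\|g'\|_{L^{p}}\le C\|g\|_{L^{p}}^{1/2}\|g''\|_{L^{p}}^{1/2}$ together with a covering and scaling argument; and (2) the first-order step $\|\nabla^{k} f\|_{L^{r}}\le C\|\nabla^{k+1} f\|_{L^{p}}$ with $\tfrac1r=\tfrac1p-\tfrac13$, which lowers the order of the top derivative while advancing the integrability by one Sobolev increment. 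Applying the first-order result of the previous paragraph to $\nabla^{k} f$, redistributing derivatives by (1) and (2), and absorbing the intermediate terms via Young's inequality would cover the full admissible range $0\le a\le 1-\tfrac{k}{m}$, once the exponents are matched against the already-verified scaling relation.

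Finally I would treat the exceptional cases. Case $(i)$, namely $k=0$, $mp<3$, $q=\infty$, needs the hypothesis that $f$ vanishes at infinity so that the $L^{\infty}$ datum is genuinely controlled (otherwise a nonzero additive constant obstructs the bound); here a truncation and approximation by compactly supported functions closes the estimate. The main obstacle is case $(ii)$: when $1<p<\infty$ and $m-k-\tfrac3p$ is a non-negative integer, the endpoint $a=0$ corresponds to the critical Sobolev embedding of $\nabla^{k}f$ into $L^{\infty}$, which \emph{fails}; at this borderline one only obtains an embedding into $\mathrm{BMO}$, equivalently a logarithmically divergent telescoping sum. I would therefore keep $a>0$ strictly and exploit the extra factor $\|f\|_{L^{q}}^{a}$ to kill the logarithm, interpolating the borderline $\mathrm{BMO}$/John--Nirenberg bound against $L^{q}$. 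Making this limiting argument quantitative and uniform over the admissible exponents is the delicate point of the whole proof.
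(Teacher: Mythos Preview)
The paper does not actually prove this lemma: it simply records the Gagliardo--Nirenberg inequality as a known fact with a reference to \cite[Theorem 12.87]{book17} and then lists the special instances it needs. So there is no ``paper's own proof'' to compare against.

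Your outline is a reasonable sketch of how one proves the general inequality from scratch: the scaling check, the first-order Sobolev/Loomis--Whitney base case, the same-integrability convexity estimate for intermediate derivatives, and the treatment of the two exceptional cases are all standard ingredients and essentially the route taken in textbook proofs such as the one cited. If anything, the part you flag as ``the delicate point'' (the borderline $a=0$ case where the critical embedding into $L^{\infty}$ fails) is indeed where the work lies, and your description via $\mathrm{BMO}$/John--Nirenberg interpolation is one of the known ways to handle it; another is a direct Littlewood--Paley or Riesz-potential argument. For the purposes of this paper, however, none of this machinery is needed: only a handful of concrete instances (e.g.\ $\|f\|_{L^{6}}\le C\|\nabla f\|_{L^{2}}$, $\|f\|_{L^{\infty}}\le C\|\nabla f\|_{L^{2}}^{1/2}\|\nabla^{2}f\|_{L^{2}}^{1/2}$) are ever used, and those follow immediately from the basic Sobolev embedding plus H\"older interpolation without the full inductive apparatus.
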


Some special cases of \eqref{gn1} used frequently are
\begin{gather*}
\|f\|_{L^{6}} \leq C \|\nabla f\|_{L^{2}}, \quad
\|f\|_{L^{\infty}} \leq C\|f\|_{L^{6}}^{\frac{1}{2}} \|\nabla f \|_{L^{6}}^{\frac{1}{2}}
\leq C\|\nabla f\|_{L^{2}}^{\frac{1}{2}} \|\nabla^{2} f \|_{L^{2}}^{\frac{1}{2}},\\
\|\nabla f\|_{L^{2}} \leq C\|f\|_{L^{2}}^{\frac{1}{2}} \|\nabla^{2} f \|_{L^{2}}^{\frac{1}{2}},\quad
\|f\|_{L^{\infty}} \leq C\|f\|_{L^{2}}^{\frac{1}{4}} \|\nabla^{2} f \|_{L^{2}}^{\frac{3}{4}},\quad
\|f\|_{L^{3}} \leq C\|f\|_{L^{2}}^{\frac{1}{2}}\|\nabla f\|_{L^{2}}^{\frac{1}{2}}.
\end{gather*}

The effective viscous flux, the material derivative of the velocity, and the vorticity are defined respectively by
\begin{align*}
F \triangleq (2 \mu+\lambda)\operatorname{div} u - P - \frac{1}{2}|b|^{2},\quad
\dot{u} \triangleq u_{t} + u \cdot \nabla u,\quad
\operatorname{curl} u \triangleq \nabla \times u.
\end{align*}
Since $\operatorname{div} b = 0$ and $b\cdot \nabla b = \operatorname{div}(b\otimes b)$, applying $\operatorname{div}$ and $\operatorname{curl}$ to \eqref{1}$_{2}$ yields the elliptic equations
\begin{align}\label{equ}
\Delta F = \operatorname{div}\big(\rho \dot{u} - b \cdot \nabla b\big), \quad
\mu \Delta (\operatorname{curl} u) = \operatorname{curl}\big(\rho \dot{u} - b \cdot \nabla b\big).
\end{align}

Finally, the next lemma gives basic estimates for $F$ and $u$.
\begin{lemma}\label{lem}
Let $(\rho, u, \theta, b)$ be a smooth solution of the problem \eqref{1}--\eqref{5}.  Then there exists a positive constant $C$ depending only on $\mu$ and $\lambda$ such that
\begin{gather}
\|\nabla F\|_{L^{2}}+\|\nabla \operatorname{curl} u\|_{L^{2}} \leq C\Big(\|\rho\|_{L^\infty}^{\frac12}\|\sqrt{\rho} \dot{u}\|_{L^{2}}+\||b||\nabla b|\|_{L^{2}}\Big), \label{eq1}\\
\|F\|_{L^{3}}+\|\operatorname{curl} u\|_{L^{3}}  \leq C\Big(\| \rho \|_{L^{3}}^{\frac{1}{2}}\|\sqrt{\rho} \dot{u}\|_{L^{2}}+ \|\nabla b\|_{L^{2}}^{2}\Big), \label{eq2}\\
\|\nabla u\|_{L^{6}} \leq C\Big(\|\rho\|_{L^\infty}^{\frac12}\|\sqrt{\rho} \dot{u}\|_{L^{2}}+\|\rho\|_{L^\infty}\|\nabla \theta\|_{L^{2}}+\| |b||\nabla b| \|_{L^{2}}\Big).\label{eq3}
\end{gather}
\end{lemma}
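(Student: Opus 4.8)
The starting point is the pair of elliptic equations \eqref{equ}, which express $F$ and $\operatorname{curl} u$ in terms of the common source $G\triangleq\rho\dot{u}-b\cdot\nabla b$. Since $\Delta F=\operatorname{div} G$ and $\mu\Delta(\operatorname{curl} u)=\operatorname{curl} G$, the quantities $\nabla F$ and $\nabla\operatorname{curl} u$ are recovered from $G$ through the second-order operators $\nabla\Delta^{-1}\operatorname{div}$ and $\mu^{-1}\nabla\Delta^{-1}\operatorname{curl}$, which are Calder\'on--Zygmund operators and hence bounded on $L^{p}(\mathbb{R}^{3})$ for every $1<p<\infty$; thus $\|\nabla F\|_{L^{p}}+\|\nabla\operatorname{curl} u\|_{L^{p}}\le C\|G\|_{L^{p}}$. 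For \eqref{eq1} the plan is to take $p=2$, where one may alternatively avoid singular integrals by multiplying $\Delta F=\operatorname{div} G$ by $F$ and integrating by parts to get $\|\nabla F\|_{L^{2}}^{2}=\int\nabla F\cdot G\le\|\nabla F\|_{L^{2}}\|G\|_{L^{2}}$, and treating $\operatorname{curl} u$ similarly. It then remains to bound $\|G\|_{L^{2}}$: writing $\rho\dot{u}=\sqrt{\rho}\,(\sqrt{\rho}\,\dot{u})$ yields $\|\rho\dot{u}\|_{L^{2}}\le\|\rho\|_{L^{\infty}}^{1/2}\|\sqrt{\rho}\,\dot{u}\|_{L^{2}}$, while the pointwise bound $|b\cdot\nabla b|\le|b||\nabla b|$ gives $\|b\cdot\nabla b\|_{L^{2}}\le\||b||\nabla b|\|_{L^{2}}$, which together produce \eqref{eq1}.

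For \eqref{eq2} I would run the same elliptic estimate at the exponent $p=\tfrac32$, obtaining $\|\nabla F\|_{L^{3/2}}+\|\nabla\operatorname{curl} u\|_{L^{3/2}}\le C\|G\|_{L^{3/2}}$, and then invoke the Sobolev embedding $W^{1,3/2}(\mathbb{R}^{3})\hookrightarrow L^{3}(\mathbb{R}^{3})$ (a special case of Lemma \ref{gn}, using that $F,\operatorname{curl} u\to 0$ at infinity) to pass from $\|\nabla F\|_{L^{3/2}}$ to $\|F\|_{L^{3}}$ and from $\|\nabla\operatorname{curl} u\|_{L^{3/2}}$ to $\|\operatorname{curl} u\|_{L^{3}}$. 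The source must now be split differently: H\"older with $\|\sqrt{\rho}\|_{L^{6}}=\|\rho\|_{L^{3}}^{1/2}$ gives $\|\rho\dot{u}\|_{L^{3/2}}\le\|\rho\|_{L^{3}}^{1/2}\|\sqrt{\rho}\,\dot{u}\|_{L^{2}}$, whereas $\|b\cdot\nabla b\|_{L^{3/2}}\le\|b\|_{L^{6}}\|\nabla b\|_{L^{2}}\le C\|\nabla b\|_{L^{2}}^{2}$ by H\"older together with the Sobolev inequality $\|b\|_{L^{6}}\le C\|\nabla b\|_{L^{2}}$. This yields \eqref{eq2}.

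For \eqref{eq3} the plan is to use the elliptic identity $\Delta u=\nabla\operatorname{div} u-\operatorname{curl}\operatorname{curl} u$, which gives the standard bound $\|\nabla u\|_{L^{6}}\le C(\|\operatorname{div} u\|_{L^{6}}+\|\operatorname{curl} u\|_{L^{6}})$. The rotational part is immediate: by Sobolev and \eqref{eq1}, $\|\operatorname{curl} u\|_{L^{6}}\le C\|\nabla\operatorname{curl} u\|_{L^{2}}$. For the dilatational part, the definition of the effective viscous flux gives $(2\mu+\lambda)\operatorname{div} u=F+P+\tfrac12|b|^{2}$, so I would estimate the three pieces separately: $\|F\|_{L^{6}}\le C\|\nabla F\|_{L^{2}}$ by Sobolev and \eqref{eq1}; $\|P\|_{L^{6}}=R\|\rho\theta\|_{L^{6}}\le R\|\rho\|_{L^{\infty}}\|\theta\|_{L^{6}}\le C\|\rho\|_{L^{\infty}}\|\nabla\theta\|_{L^{2}}$; and $\||b|^{2}\|_{L^{6}}=\|b\|_{L^{12}}^{2}\le C\|\nabla(|b|^{2})\|_{L^{2}}\le C\||b||\nabla b|\|_{L^{2}}$, using Sobolev applied to $|b|^{2}\to 0$ at infinity together with $|\nabla|b|^{2}|\le 2|b||\nabla b|$. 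Collecting these estimates gives \eqref{eq3}.

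The proof is essentially a careful bookkeeping exercise once the elliptic equations \eqref{equ} are in hand, so I do not anticipate a deep obstacle. The only points requiring genuine attention are the correct choice of integrability exponents---running the elliptic estimate in $L^{2}$ for \eqref{eq1} but in $L^{3/2}$ for \eqref{eq2}, so that the density enters \eqref{eq2} through $\|\rho\|_{L^{3}}^{1/2}$ rather than $\|\rho\|_{L^{\infty}}^{1/2}$---and the handling of the magnetic contributions, where one must group the quadratic terms $b\cdot\nabla b$ and $|b|^{2}$ as $\||b||\nabla b|\|_{L^{2}}$, $\|\nabla b\|_{L^{2}}^{2}$, or $\|b\|_{L^{6}}\|\nabla b\|_{L^{2}}$ depending on which target norm is being matched.
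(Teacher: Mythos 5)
Your proposal is correct and follows essentially the same route as the paper: the $L^{2}$ elliptic theory for \eqref{equ} combined with the splitting $\rho\dot{u}=\sqrt{\rho}(\sqrt{\rho}\dot{u})$ for \eqref{eq1}, the $L^{3/2}\to L^{3}$ mapping of $(-\Delta)^{-1}\operatorname{div}$ (equivalently, your $L^{3/2}$ Calder\'on--Zygmund estimate plus Sobolev embedding) for \eqref{eq2}, and the decomposition $\|\nabla u\|_{L^{6}}\leq C(\|\operatorname{div}u\|_{L^{6}}+\|\operatorname{curl}u\|_{L^{6}})$ with $\operatorname{div}u$ expressed through $F$, $P$, and $|b|^{2}$ for \eqref{eq3}. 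No gaps.
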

\begin{proof}
The standard $L^{2}$ theory for the elliptic equations \eqref{equ} directly implies \eqref{eq1}. Moreover,
\begin{align*}
\|\nabla u\|_{L^{6}} & \leq C\big(\|\operatorname{curl} u\|_{L^{6}}+\|\operatorname{div} u\|_{L^{6}}\big) \\
& \leq C\big(\|\operatorname{curl} u\|_{L^{6}}+\|F\|_{L^{6}}+\|\rho \theta\|_{L^{6}}+\||b|^{2}\|_{L^{6}}\big) \\
& \leq C\big(\|\nabla \operatorname{curl} u\|_{L^{2}}+\|\nabla F\|_{L^{2}}+\|\rho\|_{L^\infty}\|\nabla \theta\|_{L^{2}}+\||b||\nabla b|\|_{L^{2}}\big)\\
&\leq C\Big(\|\rho\|_{L^\infty}^{\frac12}\|\sqrt{\rho} \dot{u}\|_{L^{2}}+\|\rho\|_{L^\infty}\|\nabla \theta\|_{L^{2}}+\||b||\nabla b|\|_{L^{2}}\Big)
\end{align*}
 gives \eqref{eq3}. The representations of $F$ and $\operatorname{curl} u$ together with Sobolev's inequality yield that
\begin{align*}
\|F\|_{L^{3}}+\|\operatorname{curl} u\|_{L^{3}} \leq & C\| (-\Delta)^{-1} \operatorname{div}(\rho \dot{u}-b\cdot \nabla b)\|_{L^{3}}+C\| (-\Delta)^{-1} \operatorname{curl}  (\rho \dot{u}-b\cdot \nabla b)\|_{L^{3}} \\
\leq&  C\|\rho \dot{u}+|b||\nabla b|\|_{L^{\frac{3}{2}}} \\
\leq &  C\| \rho \|_{L^{3}}^{\frac{1}{2}}\|\sqrt{\rho} \dot{u}\|_{L^{2}}+C\|\nabla b\|_{L^{2}}^{2},
\end{align*}
as the desired \eqref{eq2}.
\end{proof}

\section{Proof of Theorem \ref{thm1}}\label{sec3}
In this section, using the local well-posedness result established in Lemma \ref{local}, we prove the global existence and uniqueness of strong solutions to problem \eqref{1}--\eqref{5}. Throughout this section, $(\rho,u,\theta,b)$ denotes a local strong solution to \eqref{1}--\eqref{5} in $\mathbb{R}^{3}\times(0,T]$ for some $T>0$. For convenience, we write
\begin{align}\label{xz2}
\int \cdot \, d x := \int_{\mathbb{R}^{3}} \cdot \, d x, \quad \nu = \kappa = c_{v} = R = 1.
\end{align}

For later use, we introduce
\begin{align*}
	S_{t} \triangleq \bigg(1+\bar{\rho}+\frac{1}{\bar{\rho}}\bigg) S_{t}^{\prime} S_{t}^{\prime\prime},
\end{align*}
where
\begin{align*}
S_{t}^{\prime} \triangleq \|\rho(t)\|_{L^{3}} + (\bar{\rho}^{2}+\bar{\rho})\big( \| \sqrt{\rho} u(t)\|_{L^{2}}^{2} + \| b(t)\|_{L^{2}}^{2}\big),
\end{align*}
and
\begin{align*}
S_{t}^{\prime\prime} \triangleq \|\nabla u(t)\|_{L^{2}}^{2} + (\bar{\rho}+1)\|\sqrt{\rho} \theta(t)\|_{L^{2}}^{2}
+ \|\nabla b(t)\|_{L^{2}}^{2} + \|b(t)\|_{L^{4}}^{4}.
\end{align*}

The next proposition is the key step in the proof of Theorem \ref{thm1}.
\begin{proposition}\label{pro}
Under the hypotheses of Theorem \ref{thm1}, $\epsilon_{0}>0$ is the small positive constant that appears in \eqref{sca1}. Then there exist positive constants $C_{1}$ and $\epsilon_{2}$, depending only on the physical parameters (but independent of the initial data, $\bar{\rho}$, and $T$), with $\epsilon_{2}$ further being determined by \eqref{sm1}, \eqref{sm2},  \eqref{sm4},  \eqref{sm6}, \eqref{sm11}, \eqref{sm7}, and \eqref{sm9}, such that, if for all $t\in[0,T]$,
\begin{align}\label{p1}
	\begin{cases}
		\rho \leq 2\bar{\rho}, \\[2mm]
		S_{t} + (\bar{\rho}^{2}+\bar{\rho})
		\sup_{t\in[0,T]} \|\rho(t)\|_{L^{3}}\int_{0}^{T} \|\nabla \theta\|_{L^{2}}^{2}\, dt+(\bar{\rho}+1)\sup _{t \in[0, T]} \|\nabla b(t)\|_{L^{2}}^{2} \int_{0}^{T}\|\nabla u\|_{L^{2}}^{2}\, d t\\[2mm]
		+\sup _{t \in[0, T]}\|\nabla u(t)\|_{L^{2}}^{2}\int_{0}^{T}\|\nabla u\|_{L^{2}}^{2}\, d t
		\leq 2\epsilon,
	\end{cases}
\end{align}
for some $\epsilon$ satisfying
\begin{align*}
C_{1}\epsilon_{0} \leq \epsilon \leq \epsilon_{2},
\end{align*}
then
\begin{align}\label{p2}
	\begin{cases}
		\rho \leq \frac{3}{2}\bar{\rho}, \\[2mm]
		S_{t} + (\bar{\rho}^{2}+\bar{\rho})
		\sup_{t\in[0,T]} \|\rho(t)\|_{L^{3}}\int_{0}^{T} \|\nabla \theta\|_{L^{2}}^{2}\, dt+(\bar{\rho}+1)\sup _{t \in[0, T]} \|\nabla b(t)\|_{L^{2}}^{2} \int_{0}^{T}\|\nabla u\|_{L^{2}}^{2}\, d t\\[2mm]
		+\sup _{t \in[0, T]}\|\nabla u(t)\|_{L^{2}}^{2}\int_{0}^{T}\|\nabla u\|_{L^{2}}^{2}\, d t
		\leq \frac{3}{2}\epsilon.
	\end{cases}
\end{align}
\end{proposition}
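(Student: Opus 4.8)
The plan is to treat \eqref{p1} as a standing \emph{a priori} hypothesis and, under it, to derive a closed chain of energy estimates for the building blocks of $S_t$ whose accumulated constants depend only on the physical parameters. The strict improvement from $2\epsilon$ to $\tfrac32\epsilon$ in \eqref{p2} should then come entirely from the structural fact that every nonlinear contribution carries an extra power of $\epsilon$ (or of $N_0$), so that the choices $C_1\epsilon_0\le\epsilon\le\epsilon_2$ with $\epsilon_2$ small force those contributions strictly below the available margin. Throughout, I would convert $\nabla u$ information into information on $\sqrt{\rho}\dot u$, $\nabla b$, and $\nabla\theta$ by means of the effective viscous flux $F$ and the elliptic identities \eqref{equ}, i.e.\ via Lemma~\ref{lem} and the bounds \eqref{eq1}--\eqref{eq3}.

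First I would record the basic energy balance (conservation of total energy together with the $L^\infty_tL^2_x$ bounds on $\sqrt{\rho}u$, $b$, and $\sqrt{\rho}\theta$) and extract from it the dissipation inequality \eqref{x0}, written in terms of $\|\rho\|_{L^3}$ and $\|\nabla\theta\|_{L^2}$. Next, using the renormalized continuity equation to compute $\tfrac{d}{dt}\|\rho\|_{L^3}^3=-2\int\rho^3\operatorname{div}u$ and substituting $\operatorname{div}u=(2\mu+\lambda)^{-1}(F+\rho\theta+\tfrac12|b|^2)$, I would obtain the evolution of $\|\rho\|_{L^3}$ (this is \eqref{c3}), the flux term being controlled by \eqref{eq2}. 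I would then work with the temperature equation \eqref{1}$_3$ rather than the total-energy equation: multiplying by $\theta$ gives $\tfrac{d}{dt}\|\sqrt{\rho}\theta\|_{L^2}^2+\|\nabla\theta\|_{L^2}^2$ with right-hand side essentially $\int(|\nabla u|^2+|\operatorname{curl}b|^2)\theta-\int P\theta\operatorname{div}u$, whose control forces the $L^2_tL^2_x$ bounds on $\sqrt{\rho}\dot u$ and $\nabla^2 b$. To supply these I would test the momentum equation \eqref{1}$_2$ against $u_t$, producing simultaneously the evolution of $\|\nabla u\|_{L^2}^2$ (equation \eqref{c9}) and the integrated bound on $\|\sqrt{\rho}\dot u\|_{L^2}^2$, and I would run the induction equation \eqref{1}$_4$ against $b$, $|b|^2b$, and $-\Delta b$ to bound $\|b\|_{L^4}^4$, $\|\nabla b\|_{L^2}^2$, and $\|\nabla^2 b\|_{L^2}^2$ respectively.

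Assembling these, I would form $S_t=(1+\bar{\rho}+\tfrac1{\bar{\rho}})S_t'S_t''$, multiplying \eqref{c9} by the weight $(1+\bar{\rho}+\tfrac1{\bar{\rho}})$ so that the factors match across the product and the resulting quantity is genuinely invariant under \eqref{sca}. The decisive point is that the three cross-terms appended in \eqref{p1}—namely $(\bar{\rho}^2+\bar{\rho})\sup_t\|\rho\|_{L^3}\int_0^T\|\nabla\theta\|_{L^2}^2$, $(\bar{\rho}+1)\sup_t\|\nabla b\|_{L^2}^2\int_0^T\|\nabla u\|_{L^2}^2$, and $\sup_t\|\nabla u\|_{L^2}^2\int_0^T\|\nabla u\|_{L^2}^2$—are exactly the combinations thrown off when one estimates the coupling term $\int|\nabla u|^2\theta$ and the magnetic source $\int(\operatorname{curl}b\times b)\cdot\dot u$ (the quantity $K_2(t)$ of \eqref{xxxx0}). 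Carrying them inside the hypothesis lets every such term be absorbed with a small prefactor once \eqref{p1} holds; this is the substitute for the weighted $L^4$ velocity estimate of \cite{LZ22}, and it is what allows the restriction $3\mu>\lambda$ to be dropped.

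I expect the main obstacle to be precisely this closure: controlling $\int|\nabla u|^2\theta$ in the temperature estimate and the magnetic source $K_2(t)$ in the $\sqrt{\rho}\dot u$ estimate \emph{without} generating any term that is merely linear in $\epsilon$, since such a term would destroy the strict gain. The remaining part of \eqref{p2}, the pointwise bound $\rho\le\tfrac32\bar{\rho}$, I would get by integrating the continuity equation along characteristics $\dot X=u(X,t)$: since $\tfrac{d}{dt}\rho(X(t),t)=-\rho\,\operatorname{div}u$ with $\operatorname{div}u=(2\mu+\lambda)^{-1}(F+\rho\theta+\tfrac12|b|^2)$ and the pressure and magnetic terms of favorable sign, the growth of $\rho$ is governed by $\int_0^T\|F\|_{L^\infty}\,dt$, which the preceding estimates bound by a small power of $N_0$; taking $\epsilon_0$ small then yields $\rho\le\bar{\rho}\exp(CN_0^{\alpha})\le\tfrac32\bar{\rho}$. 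This is the content of Corollary~\ref{co1}, and combined with the improved bound on $S_t$ of Corollary~\ref{co2} it completes \eqref{p2}.
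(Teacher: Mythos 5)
Your plan follows the paper's proof essentially step for step: the basic energy inequality \eqref{x0}, the $L^{3}$ bound on $\rho$ via the renormalized continuity equation and the effective viscous flux, the $\theta$-weighted temperature estimate, the $u_{t}$ test of the momentum equation producing $K_{1}$ and $K_{2}$, the induction-equation estimates for $\|\nabla b\|_{L^{2}}^{2}+\|b\|_{L^{4}}^{4}$, the weighted assembly of $S_{t}$, and the absorption of the three cross-terms of \eqref{p1} with small prefactors --- this is exactly Lemmas \ref{lem1}--\ref{lem5} and Corollary \ref{co1}.

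The one step that, as written, would fail is the pointwise density bound. You propose to control the growth of $\rho$ along characteristics by $\int_{0}^{T}\|F\|_{L^{\infty}}\,dt$ and assert that the preceding estimates bound this by a small power of $N_{0}$. They do not: at the regularity level available here, $\|F\|_{L^{\infty}}$ would require control of $\|\nabla F\|_{L^{6}}$, i.e.\ of $\rho\dot u$ in $L^{6}$, which is nowhere available, while the accessible bound $\|\nabla F\|_{L^{2}}\lesssim\|\rho\|_{L^{\infty}}^{1/2}\|\sqrt{\rho}\dot u\|_{L^{2}}+\||b||\nabla b|\|_{L^{2}}$ is only square-integrable in time, so no $L^{1}_{t}L^{\infty}_{x}$ estimate for $F$ follows from the energy-level bounds. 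The paper circumvents this in Corollary \ref{co2}: along the trajectory it uses the decomposition \eqref{s2}, namely $F=-\frac{d}{d\tau}\big[(-\Delta)^{-1}\operatorname{div}(\rho u)\big]+[u_{i},R_{ij}](\rho u_{j})+(-\Delta)^{-1}\operatorname{div}\operatorname{div}(b\otimes b)$, integrates the first term exactly in time so that only $\sup_{t}\|(-\Delta)^{-1}\operatorname{div}(\rho u)\|_{L^{\infty}}$ enters (controlled by \eqref{xu0}), and bounds the commutator $[u_{i},R_{ij}](\rho u_{j})$ and the magnetic Riesz term in $L^{1}_{t}L^{\infty}_{x}$ by interpolation and the commutator estimate, each piece coming out as $C\big[(1+\bar{\rho}+\tfrac{1}{\bar{\rho}})S_{0}^{\prime}S_{0}^{\prime\prime}\big]^{1/2}$. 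Without this decomposition the characteristic argument does not close. (A minor slip besides: you attribute the $S_{t}$ bound to Corollary \ref{co2} and the density bound to Corollary \ref{co1}; in the paper it is the other way around.)
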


The proof of Proposition \ref{pro} is based on Corollaries \ref{co1} and \ref{co2}. Throughout the rest of the paper, the symbol $C$ denotes a generic positive constant independent of the initial data, $T$, $\epsilon_{0}$, $\epsilon$, and $\bar{\rho}$, whose value may change from line to line. As a first step, we derive a basic energy estimate.
\begin{lemma}\label{lem1}
It holds that, for all $t \in[0, T]$,
\begin{align}\label{x0}
& \|\sqrt{\rho} u(t)\|_{L^{2}}^{2}+\|b(t)\|_{L^{2}}^{2}
+\int_{0}^{T}\big( \|\nabla u \|_{L^{2}}^{2}+\|\nabla b \|_{L^{2}}^{2}\big)\, d t \notag\\
&\leq C\big(\|\sqrt{\rho_{0}} u_{0} \|_{L^{2}}^{2}+ \|b_{0} \|_{L^{2}}^{2}\big)
+C\sup _{t \in[0, T]}\|\rho(t)\|_{L^{3}}^{2} \int_{0}^{T}\|\nabla \theta \|_{L^{2}}^{2}\, d t.
\end{align}
\end{lemma}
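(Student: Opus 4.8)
The plan is to derive the energy identity by testing the momentum equation \eqref{1}$_2$ with $u$ and the induction equation \eqref{1}$_4$ with $b$, and then to exploit a cancellation between the Lorentz-force work and the magnetic induction work. First I would multiply \eqref{1}$_2$ by $u$ and integrate over $\mathbb{R}^3$; combining $(\rho u)_t+\operatorname{div}(\rho u\otimes u)=\rho\dot u$ with the continuity equation \eqref{1}$_1$ gives $\int\rho\dot u\cdot u\,dx=\frac{d}{dt}\frac12\int\rho|u|^2\,dx$, so that after integrating the viscous terms by parts one obtains
\[
\frac{d}{dt}\frac12\int\rho|u|^2\,dx+\mu\int|\nabla u|^2\,dx+(\lambda+\mu)\int(\operatorname{div}u)^2\,dx=\int P\operatorname{div}u\,dx+\int\big[(\operatorname{curl}b)\times b\big]\cdot u\,dx.
\]
Similarly, multiplying \eqref{1}$_4$ by $b$ and integrating by parts (recall $\nu=1$) yields
\[
\frac{d}{dt}\frac12\int|b|^2\,dx+\int|\nabla b|^2\,dx=\int\operatorname{curl}(u\times b)\cdot b\,dx.
\]

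The key structural observation is that the two coupling terms cancel exactly. Using the scalar triple product identity $(A\times B)\cdot C=A\cdot(B\times C)$ together with the self-adjointness of $\operatorname{curl}$ (valid under the far-field decay \eqref{5}),
\[
\int\big[(\operatorname{curl}b)\times b\big]\cdot u\,dx=\int(\operatorname{curl}b)\cdot(b\times u)\,dx=-\int(\operatorname{curl}b)\cdot(u\times b)\,dx=-\int\operatorname{curl}(u\times b)\cdot b\,dx.
\]
Adding the two identities therefore eliminates the magnetic coupling and leaves only the pressure-work term $\int P\operatorname{div}u\,dx=\int\rho\theta\operatorname{div}u\,dx$ (with $R=1$). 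I would control this by Hölder's inequality, the Gagliardo--Nirenberg bound $\|\theta\|_{L^6}\le C\|\nabla\theta\|_{L^2}$ from Lemma \ref{gn}, and Young's inequality:
\[
\Big|\int\rho\theta\operatorname{div}u\,dx\Big|\le\|\rho\|_{L^3}\|\theta\|_{L^6}\|\nabla u\|_{L^2}\le C\|\rho\|_{L^3}\|\nabla\theta\|_{L^2}\|\nabla u\|_{L^2}\le\frac{\mu}{2}\|\nabla u\|_{L^2}^2+C\|\rho\|_{L^3}^2\|\nabla\theta\|_{L^2}^2.
\]

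To finish, I would note that the physical restriction $2\mu+3\lambda\ge0$ together with $\mu>0$ gives $\lambda+\mu\ge\mu/3>0$, so the term $(\lambda+\mu)\int(\operatorname{div}u)^2\,dx$ is nonnegative and may be discarded; absorbing $\frac{\mu}{2}\|\nabla u\|_{L^2}^2$ into the viscous dissipation, integrating over $[0,t]$, taking the supremum over $t\in[0,T]$, and bounding $\|\rho\|_{L^3}^2$ by its supremum inside the time integral produces \eqref{x0}. Since the argument closes purely at the level of the basic energy law, there is no genuine obstacle here; the only points demanding care are the precise cancellation of the two magnetic coupling terms and the fact that the temperature is not dissipated at this level, which is exactly why the factor $\|\rho\|_{L^3}^2\int_0^T\|\nabla\theta\|_{L^2}^2\,dt$ must be retained on the right-hand side rather than absorbed.
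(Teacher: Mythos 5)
Your proposal is correct and follows essentially the same route as the paper: test \eqref{1}$_2$ with $u$ and \eqref{1}$_4$ with $b$, cancel the magnetic coupling terms, and bound the pressure work $\int\rho\theta\operatorname{div}u\,dx$ by H\"older, $\|\theta\|_{L^6}\le C\|\nabla\theta\|_{L^2}$, and Young. The only cosmetic difference is that the paper absorbs the pressure term into $(\mu+\lambda)\|\operatorname{div}u\|_{L^2}^2$ rather than into $\tfrac{\mu}{2}\|\nabla u\|_{L^2}^2$; both closings are valid since $\mu+\lambda\ge\mu/3>0$.
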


\begin{proof}
Multiplying \eqref{1}$_2$ by $u$ and \eqref{1}$_4$ by $b$, respectively, adding the resulting equations together, and integration by parts, we obtain from H\"{o}lder's and Sobolev's inequalities that
\begin{align*}
&\frac{1}{2} \frac{d}{d t}\big(\|\sqrt{\rho} u\|_{L^{2}}^{2}+\|b\|_{L^{2}}^{2}\big)
+\mu\|\nabla u\|_{L^{2}}^{2}+(\mu+\lambda)\|\operatorname{div} u\|_{L^{2}}^{2}
+\|\nabla b\|_{L^{2}}^{2} \\
& =\int \rho \theta \operatorname{div} u \, d x
\leq \|\rho\|_{L^{3}}\|\theta\|_{L^{6}}\|\operatorname{div} u\|_{L^{2}}
\leq (\mu+\lambda)\|\operatorname{div} u\|_{L^{2}}^{2}
+C\|\rho\|_{L^{3}}^{2}\|\nabla \theta\|_{L^{2}}^{2},
\end{align*}
which yields that
\begin{align}\label{x1}
\frac{d}{d t}\left(\|\sqrt{\rho} u\|_{L^{2}}^{2}+\|b\|_{L^{2}}^{2}\right)
+\mu\|\nabla u\|_{L^{2}}^{2}+\|\nabla b\|_{L^{2}}^{2}
\leq C\|\rho\|_{L^{3}}^{2}\|\nabla \theta\|_{L^{2}}^{2}.
\end{align}
Integrating \eqref{x1} over $(0,T)$ leads to \eqref{x0}.
\end{proof}

Next, we show the following key estimate of $\|\rho \|_{L^{3}}$.
\begin{lemma}\label{lem2}
Under the hypotheses of Proposition \ref{pro},   it holds that, for all $t \in[0, T]$,
\begin{align}\label{xx1}
\|\rho(t)\|_{L^{3}} \leq C \|\rho_{0} \|_{L^{3}}+C(\bar{\rho}^{2}+\bar{\rho}) \int_{0}^{T}\big(\|\nabla u\|_{L^{2}}^{2}+\|\nabla b\|_{L^{2}}^{2}\big) \, d t.
\end{align}
\end{lemma}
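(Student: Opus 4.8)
The plan is to run a renormalised estimate on the continuity equation. Multiplying the mass equation in \eqref{1} by $3\rho^{2}$ and integrating by parts yields
\[
\frac{d}{dt}\|\rho\|_{L^{3}}^{3}=-2\int\rho^{3}\operatorname{div}u\,dx .
\]
The decisive step is to trade $\operatorname{div}u$ for the effective viscous flux: from $(2\mu+\lambda)\operatorname{div}u=F+\rho\theta+\tfrac12|b|^{2}$ this identity becomes
\[
\frac{d}{dt}\|\rho\|_{L^{3}}^{3}+\frac{2}{2\mu+\lambda}\int\rho^{4}\theta\,dx+\frac{1}{2\mu+\lambda}\int\rho^{3}|b|^{2}\,dx=-\frac{2}{2\mu+\lambda}\int\rho^{3}F\,dx .
\]
Since $\rho\ge 0$ and $\theta\ge 0$, the two integrals on the left are nonnegative and may simply be discarded. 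This favourable sign is exactly what purges the temperature from the estimate and explains why the right-hand side of \eqref{xx1} involves neither $\theta$ nor $\nabla\theta$; any attempt to keep $\operatorname{div}u$ in an unsigned norm, or to expand $F$ out again, would bring the pressure back and could not be matched.

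It then remains to estimate $\int\rho^{3}F\,dx$. By Hölder's inequality $\big|\int\rho^{3}F\,dx\big|\le\|\rho\|_{L^{9/2}}^{3}\|F\|_{L^{3}}$, and the \emph{a priori} bound $\rho\le 2\bar{\rho}$ of \eqref{p1} turns the weighted density norm into $\|\rho\|_{L^{3}}$: from $\int\rho^{9/2}\le(2\bar{\rho})^{3/2}\int\rho^{3}$ one gets $\|\rho\|_{L^{9/2}}^{3}\le 2\bar{\rho}\,\|\rho\|_{L^{3}}^{2}$. Thus every contribution to $\int\rho^{3}F$ carries the weight $\|\rho\|_{L^{3}}^{2}$, which cancels the cube through $\tfrac{d}{dt}\|\rho\|_{L^{3}}^{3}=3\|\rho\|_{L^{3}}^{2}\tfrac{d}{dt}\|\rho\|_{L^{3}}$ and leaves a differential inequality for $\|\rho\|_{L^{3}}$ itself. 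Inserting the flux bound \eqref{eq2}, its magnetic part $\|\nabla b\|_{L^{2}}^{2}$ immediately produces the admissible contribution $C\bar{\rho}\|\nabla b\|_{L^{2}}^{2}$, dominated by $C(\bar{\rho}^{2}+\bar{\rho})(\|\nabla u\|_{L^{2}}^{2}+\|\nabla b\|_{L^{2}}^{2})$ and matching \eqref{xx1} after integration in time.

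The crux is the inertial part of \eqref{eq2}, the term $C\bar{\rho}\|\rho\|_{L^{3}}^{1/2}\|\sqrt{\rho}\dot{u}\|_{L^{2}}$, which is not of the dissipative shape demanded by \eqref{xx1} and whose $L_{t}^{2}L_{x}^{2}$ control is only secured later in the argument, so it cannot simply be invoked here. To avoid it, I would integrate the flux identity in time and treat the inertial contribution to $\int\rho^{3}F$ directly from the elliptic equation \eqref{equ} for $F$, rewriting $\rho\dot{u}=(\rho u)_{t}+\operatorname{div}(\rho u\otimes u)$ by means of the continuity equation. The $(\rho u)_{t}$ piece then integrates by parts in time into a total derivative, whose endpoints are controlled by the energy quantities of Lemma \ref{lem1}, plus a remainder in which the zeroth-order operator $(-\Delta)^{-1}\operatorname{div}$ acts on $\rho u\otimes u$ and on $\operatorname{div}(\rho u)$; bounding this operator on $L^{3}$ and using $\rho\le 2\bar{\rho}$ together with $\|u\|_{L^{6}}\le C\|\nabla u\|_{L^{2}}$ produces the quadratic bound $C\bar{\rho}^{2}\|\rho\|_{L^{3}}^{2}\|\nabla u\|_{L^{2}}^{2}$ — the source of both the factor $\bar{\rho}^{2}$ and the viscous dissipation $\|\nabla u\|_{L^{2}}^{2}$ in \eqref{xx1}. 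I expect this step — converting the material-derivative term into the clean dissipation $\int_{0}^{T}(\|\nabla u\|_{L^{2}}^{2}+\|\nabla b\|_{L^{2}}^{2})\,dt$ without resurrecting the temperature, while reconciling the total-derivative endpoints and cancelling the surviving $\|\rho\|_{L^{3}}$-weights through the smallness of \eqref{p1} — to be the delicate heart of the argument. With it in hand, integration in time yields \eqref{xx1} directly.
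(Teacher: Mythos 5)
Your proposal is correct and follows essentially the same route as the paper: after discarding the signed term $\int\rho^{3}\big(P+\tfrac12|b|^{2}\big)\,dx\ge 0$, the paper treats the inertial part of $F$ exactly as you anticipate, multiplying $(-\Delta)^{-1}\operatorname{div}$ of the momentum equation by $\rho^{3}$ so that $(\rho u)_{t}$ becomes the total derivative $\frac{d}{dt}\int(-\Delta)^{-1}\operatorname{div}(\rho u)\,\rho^{3}\,dx$, whose endpoints are absorbed via $\|(-\Delta)^{-1}\operatorname{div}(\rho u)\|_{L^{\infty}}\le C\bar{\rho}^{3/4}\|\sqrt{\rho}u\|_{L^{2}}^{1/2}\|\nabla u\|_{L^{2}}^{1/2}$ and the smallness in \eqref{p1}, while the commutator-type remainders are bounded by $C\bar{\rho}^{2}\|\rho\|_{L^{3}}^{2}\|\nabla u\|_{L^{2}}^{2}$ and $C\bar{\rho}\|\rho\|_{L^{3}}^{2}\|\nabla b\|_{L^{2}}^{2}$. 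The step you defer as the ``delicate heart'' is precisely the computation \eqref{xx4}--\eqref{xx7} of the paper and goes through as you describe, so there is no gap.
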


\begin{proof}
Applying the operator  $(-\Delta)^{-1} \operatorname{div}$  to  \eqref{1}$_{2}$ gives that
\begin{align}\label{xx2}
 (-\Delta)^{-1} \operatorname{div}(\rho u)_{t} +(-\Delta)^{-1} \operatorname{div} \operatorname{div}(\rho u \otimes u)+(2 \mu+\lambda) \operatorname{div} u-P-\frac{|b|^{2}}{2}= (-\Delta)^{-1} \operatorname{div} \operatorname{div}(b \otimes b).
\end{align}
It follows from \eqref{1}$_{1}$ that
\begin{align}\label{xx3}
\partial_{t} \rho^{3}+\operatorname{div} (u \rho^{3} )+2 \operatorname{div} u \rho^{3}=0.
\end{align}
Then, multiplying \eqref{xx2} by $\rho^{3}$ and using \eqref{xx3}, we obtain that
\begin{align}\label{xx4}
	&\rho^{3}(-\Delta)^{-1} \operatorname{div}(\rho u)_{t}+\rho^{3} (-\Delta)^{-1} \operatorname{div} \operatorname{div}(\rho u \otimes u)-\frac{2 \mu+\lambda}{2}\big(\partial_{t} \rho^{3}+\operatorname{div} (u \rho^{3} )\big)-\rho^{3} \bigg(P+\frac{|b|^{2}}{2}\bigg) \notag\\
	&=\rho^{3}(-\Delta)^{-1} \operatorname{div} \operatorname{div}(b \otimes b).
\end{align}
By \eqref{xx3} and integration by parts, one sees that
\begin{align*}
	&\int \rho^{3}  (-\Delta)^{-1} \operatorname{div}(\rho u)_{t}\, d x\notag\\
	&=  \frac{d}{d t} \int \rho^{3}  (-\Delta)^{-1} \operatorname{div}(\rho u) d x+\int\big(\operatorname{div} (\rho^{3} u )+2 \operatorname{div} u \rho^{3}\big)  (-\Delta)^{-1} \operatorname{div}(\rho u) \, d x \notag\\
	&=   \frac{d}{d t} \int \rho^{3}  (-\Delta)^{-1} \operatorname{div}(\rho u) \, d x+\int\big[2 \operatorname{div} u \rho^{3}  (-\Delta)^{-1} \operatorname{div}(\rho u)-\rho^{3} u \cdot \nabla (-\Delta)^{-1} \operatorname{div}(\rho u)\big] d x,
\end{align*}
which combined with \eqref{xx4} implies that
\begin{align}\label{xx5}
\frac{d}{d t} \int\bigg(\frac{2 \mu+\lambda}{2}-(-\Delta)^{-1} \operatorname{div}(\rho u)\bigg) \rho^{3}\, d x+\int \rho^{3} \bigg(P+\frac{|b|^{2}}{2}\bigg) \, d x \triangleq J_{1}+J_{2},
\end{align}	
where
\begin{align*}
J_{1}=\int\big[  \rho^{3} (-\Delta)^{-1} \operatorname{div} \operatorname{div}(\rho u \otimes u)-\rho^{3}u \cdot \nabla  (-\Delta)^{-1} \operatorname{div}(\rho u)+2 \operatorname{div} u \rho^{3}  (-\Delta)^{-1}\operatorname{div}(\rho u)\big]  \, d x,
\end{align*}
and
\begin{align*}
J_{2}= -\int \rho^{3} (-\Delta)^{-1} \operatorname{div} \operatorname{div}(b \otimes b) \, d x.
\end{align*}

By H\"{o}lder's inequality, Sobolev's inequality, and \eqref{p1}, we have
\begin{align*}
	&\left|J_{1}\right|\leq C\|\rho\|_{L^{\infty}}^{2}\|\rho\|_{L^{3}}^{2}\|\nabla u\|_{L^{2}}^{2}+C\|\rho\|_{L^{\infty}}^{2}\|\rho\|_{L^{3}}\|\nabla u\|_{L^{2}}\|\rho u\|_{L^{2}}  \leq C\bar{\rho}^{2}\|\rho\|_{L^{3}}^{2}\|\nabla u\|_{L^{2}}^{2}, \\
	&\left|J_{2}\right| \leq C\|\rho\|_{L^{\infty}}\|\rho\|_{L^{3}}^{2}\| (-\Delta)^{-1}\operatorname{div} \operatorname{div}(b \otimes b)\|_{L^{3}} \leq C\bar{\rho}\|\rho\|_{L^{3}}^{2}\|\nabla b\|_{L^{2}}^{2}.
\end{align*}
Therefore, one gets that
\begin{align}\label{xx6}
&\frac{d}{d t} \int\bigg(\frac{2 \mu+\lambda}{2} -(-\Delta)^{-1} \operatorname{div}(\rho u)\bigg) \rho^{3}\, d x+\int \rho^{3} \bigg(P+\frac{|b|^{2}}{2}\bigg)\, d x\notag\\
 &\leq C\bar{\rho}^{2}\|\rho\|_{L^{3}}^{2}\|\nabla u\|_{L^{2}}^{2}+C\bar{\rho}\|\rho\|_{L^{3}}^{2}\|\nabla b\|_{L^{2}}^{2}\notag\\
&\leq C(\bar{\rho}^{2}+\bar{\rho}) \|\rho\|_{L^{3}}^{2}\big(\|\nabla u\|_{L^{2}}^{2}+\|\nabla b\|_{L^{2}}^{2}\big).
\end{align}
Integrating \eqref{xx6} in $t$ over $(0, T)$ leads to
\begin{align}\label{xx7}
\|\rho(t)\|_{L^{3}}^{3}+\int_{0}^{T}\int\rho^{3}\bigg(P+\frac{|b|^{2}}{2}\bigg)\, dxdt
\leq & C\|\rho_{0}\|_{L^{3}}^{3}+C \bar{\rho}^{\frac{3}{4}} \sup _{t \in[0, T]}\Big(\|\sqrt{\rho} u(t)\|_{L^{2}}^{\frac{1}{2}}\|\nabla u(t)\|_{L^{2}}^{\frac{1}{2}}\Big) \|\rho(t) \|_{L^{3}}^{3}\notag\\
&+C(\bar{\rho}^{2}+\bar{\rho}) \int_{0}^{T}\|\rho\|_{L^{3}}^{2}\big(\|\nabla u\|_{L^{2}}^{2}+\|\nabla b\|_{L^{2}}^{2}\big)\, d t,
\end{align}
due to
\begin{align}\label{xu0}
	\|(-\Delta)^{-1} \operatorname{div}(\rho u)\|_{L^{\infty}} & \leq C\|(-\Delta)^{-1} \operatorname{div}(\rho u)\|_{L^{6}}^{\frac{1}{2}}\|\nabla(-\Delta)^{-1} \operatorname{div}(\rho u)\|_{L^{6}}^{\frac{1}{2}} \notag\\
	& \leq C\|\rho u\|_{L^{2}}^{\frac{1}{2}}\|\rho u\|_{L^{6}}^{\frac{1}{2}}\leq C \bar{\rho}^{\frac{3}{4}}\|\sqrt{\rho} u\|_{L^{2}}^{\frac{1}{2}}\|\nabla u\|_{L^{2}}^{\frac{1}{2}}.
\end{align}

Choosing $\epsilon>0$ in \eqref{p1} sufficiently small such that
\begin{align}\label{sm1}
C \bar{\rho}^{\frac{3}{4}}\sup _{t \in[0, T]}\Big(\|\sqrt{\rho} u(t)\|_{L^{2}}^{\frac{1}{2}}\|\nabla u(t)\|_{L^{2}}^{\frac{1}{2}}\Big)
\leq \frac{1}{2},
\end{align}
then it follows from \eqref{xx7} that
\begin{align*}
\|\rho(t)\|_{L^{3}}^{3} \leq C \|\rho_{0}\|_{L^{3}}^{3}+C(\bar{\rho}^{2}+\bar{\rho}) \sup _{t \in[0, T]}\|\rho(t)\|_{L^{3}}^{2} \int_{0}^{T}\big(\|\nabla u\|_{L^{2}}^{2}+\|\nabla b\|_{L^{2}}^{2}\big)\, d t,
\end{align*}
which together with Young's inequality implies the desired \eqref{xx1}.
\end{proof}

Next, we derive a crucial estimate for $\int_{0}^{T}\|\nabla \theta\|_{L^{2}}^{2} \, dt$.
\begin{lemma}\label{lem3}
Under the hypotheses of Proposition \ref{pro}, it holds that, for all $t \in[0, T]$,
\begin{align}\label{xxx1}
	 &\|\sqrt\rho \theta(t)\|_{L^{2}}^{2}+\int_{0}^{T}\|\nabla \theta\|_{L^{2}}^{2} \, d t\notag\\
	 &\leq C \|\sqrt{\rho_{0}} \theta_{0}\|_{L^{2}}^{2}+C \sup _{t \in[0, T]}\big(\|\nabla u(t)\|_{L^{2}}^{2}\|\rho(t)\|_{L^{3}}\big) \int_{0}^{T}\|\sqrt{\rho} \dot{u}\|_{L^{2}}^{2}\, d t\notag\\
	&\quad +C \sup _{t \in[0, T]}\big(\|b(t)\|_{L^{2}} \|\nabla b(t)\|_{L^{2}}\big)\int_{0}^{T}\|\nabla^{2} b\|_{L^{2}}^{2} \, d t  +C  \int_{0}^{T}\|\nabla u\|_{L^{2}}^{2}\|\nabla b\|_{L^{2}}^{4}\, d t.
\end{align}
\end{lemma}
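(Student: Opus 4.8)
The plan is to test the temperature equation $\eqref{1}_3$ (with the normalization \eqref{xz2}) against $\theta$ itself. Writing the material derivative $\dot\theta=\theta_t+u\cdot\nabla\theta$ and using the continuity equation $\eqref{1}_1$ to rewrite $\int\rho\dot\theta\,\theta\,dx=\frac12\frac{d}{dt}\|\sqrt\rho\,\theta\|_{L^2}^2$, integration by parts yields the basic identity
\begin{align*}
\frac12\frac{d}{dt}\|\sqrt\rho\,\theta\|_{L^2}^2+\|\nabla\theta\|_{L^2}^2 = -\int\rho\theta^2\operatorname{div}u\,dx+\int\big(2\mu|\mathfrak{D}(u)|^2+\lambda(\operatorname{div}u)^2+|\operatorname{curl}b|^2\big)\theta\,dx.
\end{align*}
The whole proof then reduces to bounding the pressure-work term and the viscous/magnetic heating terms on the right so that, after integrating in $t$, one copy of $\tfrac12\|\nabla\theta\|_{L^2}^2$ is absorbed into the left and the remainder reproduces exactly the three quantities on the right of \eqref{xxx1}. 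Because of the far-field vacuum, $\theta$ is controlled only through $\|\theta\|_{L^6}\le C\|\nabla\theta\|_{L^2}$ (never $\|\theta\|_{L^2}$), so every estimate must be routed through $\|\nabla\theta\|_{L^2}$.

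For the velocity heating I would bound $\int|\nabla u|^2\theta\,dx\le C\|\nabla u\|_{L^{12/5}}^2\|\theta\|_{L^6}\le C\|\nabla u\|_{L^2}\|\nabla u\|_{L^3}\|\nabla\theta\|_{L^2}$ by H\"older and the interpolation $\|\nabla u\|_{L^{12/5}}\le C\|\nabla u\|_{L^2}^{1/2}\|\nabla u\|_{L^3}^{1/2}$; Young's inequality then splits off $\tfrac18\|\nabla\theta\|_{L^2}^2$ and leaves $C\|\nabla u\|_{L^2}^2\|\nabla u\|_{L^3}^2$. The crucial step is to estimate $\|\nabla u\|_{L^3}$ through the effective viscous flux: from $\operatorname{div}u=(2\mu+\lambda)^{-1}(F+P+\tfrac12|b|^2)$ and the Calder\'on--Zygmund bound $\|\nabla u\|_{L^3}\le C(\|F\|_{L^3}+\|\operatorname{curl}u\|_{L^3}+\|P\|_{L^3}+\|b\|_{L^6}^2)$, inequality \eqref{eq2} gives $\|\nabla u\|_{L^3}^2\le C(\|\rho\|_{L^3}\|\sqrt\rho\,\dot u\|_{L^2}^2+\|\nabla b\|_{L^2}^4+\|P\|_{L^3}^2)$. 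After multiplying by $\|\nabla u\|_{L^2}^2$ and integrating in time, the first two summands produce precisely $\sup_t(\|\nabla u\|_{L^2}^2\|\rho\|_{L^3})\int_0^T\|\sqrt\rho\,\dot u\|_{L^2}^2\,dt$ and $\int_0^T\|\nabla u\|_{L^2}^2\|\nabla b\|_{L^2}^4\,dt$, i.e. the first and third terms of \eqref{xxx1}.

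For the magnetic heating I would argue analogously: $\int|\operatorname{curl}b|^2\theta\,dx\le C\|\nabla b\|_{L^{12/5}}^2\|\nabla\theta\|_{L^2}\le C\|\nabla b\|_{L^2}^{3/2}\|\nabla^2b\|_{L^2}^{1/2}\|\nabla\theta\|_{L^2}$, using $\|\nabla b\|_{L^6}\le C\|\nabla^2b\|_{L^2}$; Young leaves $C\|\nabla b\|_{L^2}^{3}\|\nabla^2b\|_{L^2}$, and the interpolation $\|\nabla b\|_{L^2}^2\le C\|b\|_{L^2}\|\nabla^2b\|_{L^2}$ converts this into $C\|b\|_{L^2}\|\nabla b\|_{L^2}\|\nabla^2b\|_{L^2}^2$, whose time integral is precisely the second term $\sup_t(\|b\|_{L^2}\|\nabla b\|_{L^2})\int_0^T\|\nabla^2b\|_{L^2}^2\,dt$.

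The main obstacle --- and the reason the estimate is delicate --- is the pressure contributions, namely the pressure-work term $\int\rho\theta^2\operatorname{div}u\,dx$ and the leftover $\|P\|_{L^3}^2$ pieces, where one cannot avoid a self-referential $\|\sqrt\rho\,\theta\|_{L^2}$. The plan is to estimate $\|P\|_{L^3}=\|\rho\theta\|_{L^3}\le C\bar\rho^{3/4}\|\sqrt\rho\,\theta\|_{L^2}^{1/2}\|\nabla\theta\|_{L^2}^{1/2}$ (using $\rho\le 2\bar\rho$ and $\|\sqrt\rho\,\theta\|_{L^6}\le C\bar\rho^{1/2}\|\nabla\theta\|_{L^2}$), so that after Young's inequality these terms become $\delta\|\nabla\theta\|_{L^2}^2$ plus a multiple of $\|\sqrt\rho\,\theta\|_{L^2}^2\|\nabla u\|_{L^2}^4$. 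Taking the supremum in $t$ and invoking the a priori hypothesis \eqref{p1} (which controls $\sup_t\|\nabla u\|_{L^2}^2\int_0^T\|\nabla u\|_{L^2}^2\,dt\le 2\epsilon$, hence $\int_0^T\|\nabla u\|_{L^2}^4\,dt$) then allows one to absorb the self-referential term into the left-hand side once $\epsilon$ is sufficiently small. I expect the bookkeeping of the $\bar\rho$-powers in this final absorption to be the most technical point, since it must remain compatible with the scaling-invariant structure of \eqref{p1}; everything else is H\"older, Gagliardo--Nirenberg, and the elliptic estimates of Lemma \ref{lem}.
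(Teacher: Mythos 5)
Your overall architecture (test $\eqref{1}_3$ against $\theta$, absorb half of $\|\nabla\theta\|_{L^2}^2$, route the viscous heating through the effective viscous flux via \eqref{eq2}, and handle the magnetic heating by interpolating $\|\nabla b\|_{L^2}^2\le C\|b\|_{L^2}\|\nabla^2 b\|_{L^2}$) coincides with the paper's proof, and your treatments of $I_2$ (modulo the pressure piece) and $I_3$ are correct and land on exactly the first and second terms of \eqref{xxx1}. The $I_3$ estimate is done slightly differently (the paper integrates by parts and uses $\|b\|_{L^3}\|\nabla^2 b\|_{L^2}\|\nabla\theta\|_{L^2}$; you interpolate $\|\nabla b\|_{L^{12/5}}$ directly), but both yield $C\|b\|_{L^2}\|\nabla b\|_{L^2}\|\nabla^2 b\|_{L^2}^2$.

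The genuine gap is your handling of the pressure contributions. You bound $\|\rho\theta\|_{L^3}\le C\bar\rho^{3/4}\|\sqrt\rho\,\theta\|_{L^2}^{1/2}\|\nabla\theta\|_{L^2}^{1/2}$, which after Young's inequality produces a term of order $\bar\rho^{3}\|\sqrt\rho\,\theta\|_{L^2}^{2}\|\nabla u\|_{L^2}^{4}$ to be removed by Gr\"onwall/absorption. The hypothesis \eqref{p1} only gives $\int_0^T\|\nabla u\|_{L^2}^4\,dt\le 2\epsilon$, with no compensating negative power of $\bar\rho$, so the exponential (or absorption) factor is $e^{C\bar\rho^{3}\epsilon}$; making this uniformly bounded forces $\epsilon\lesssim\bar\rho^{-3}$, which is incompatible with the requirement that $\epsilon_2$ and the constant $C$ in \eqref{xxx1} be independent of $\bar\rho$. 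Your instinct that "one cannot avoid a self-referential $\|\sqrt\rho\,\theta\|_{L^2}$" is in fact wrong, and avoiding it is precisely how the paper closes the estimate: it splits $\|\rho\theta\|_{L^3}\le\|\rho\|_{L^6}\|\theta\|_{L^6}\le C\bar\rho^{1/2}\|\rho\|_{L^3}^{1/2}\|\nabla\theta\|_{L^2}$ and, for the pressure-work term, $\big|\int\rho\theta^2\operatorname{div}u\,dx\big|\le\|\rho\|_{L^6}\|\theta\|_{L^6}^2\|\nabla u\|_{L^2}\le C\bar\rho^{1/2}\|\rho\|_{L^3}^{1/2}\|\nabla u\|_{L^2}\|\nabla\theta\|_{L^2}^2$ (see \eqref{xxx3}--\eqref{xxx4}). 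Both pressure terms then carry the prefactor $C\bar\rho^{1/2}\|\rho\|_{L^3}^{1/2}\|\nabla u\|_{L^2}$, which is at most $C\sqrt{2\epsilon}$ by \eqref{p1} (since $\bar\rho\,\|\rho\|_{L^3}\|\nabla u\|_{L^2}^2\le S_t\le2\epsilon$), i.e.\ it is small \emph{uniformly in} $\bar\rho$; this is exactly condition \eqref{sm2}, and the $\|\nabla\theta\|_{L^2}^2$ is absorbed into the left-hand side with no Gr\"onwall step and no stray $\bar\rho$-powers. You should replace your $\|P\|_{L^3}$ estimate by this $L^6$--$L^6$ splitting; the rest of your argument then goes through.
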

\begin{proof}
Multiplying \eqref{1}$_{3}$  by  $\theta$ and integrating the resultant over  $\mathbb{R}^{3}$ gives that
\begin{align}\label{xxx2}
\frac{1}{2} \frac{d}{d t} \|\sqrt\rho \theta\|_{L^{2}}^{2}+ \|\nabla \theta\|_{L^{2}}^{2}
	=-\int \rho \theta^{2} \operatorname{div} u \, dx+\int \mathcal{Q}(\nabla u) \theta \, dx+ \int|\operatorname{curl} b|^{2} \theta \, dx \triangleq\sum_{i=1}^{3} I_{i},
\end{align}
where  $\mathcal{Q}(\nabla u)\triangleq2 \mu|\mathfrak{D}(u)|^{2}+\lambda(\operatorname{div} u)^{2}$.
It follows from H\"{o}lder's inequality and Sobolev's inequality that
\begin{align}\label{xxx3}
I_{1} \leq C\|\rho\|_{L^{6}}\|\nabla u\|_{L^{2}}\|\theta^{2}\|_{L^{3}} \leq C\bar{\rho}^{\frac{1}{2}}\|\rho\|_{L^{3}}^{\frac{1}{2}}\|\nabla u\|_{L^{2}}\|\nabla \theta\|_{L^{2}}^{2}.
\end{align}
By virtue of \eqref{eq2} and H\"{o}lder's inequality, one has that
\begin{align}\label{xxx4}
	I_{2} \leq &  C\|\nabla \theta\|_{L^{2}}\|\nabla u\|_{L^{\frac{12}{5}}}^{2} \notag\\
	\leq &  C\|\nabla \theta\|_{L^{2}}\Big(\|\operatorname{div} u\|_{L^{\frac{12}{5}}}^{2}+ \|\operatorname{curl} u\|_{L^{\frac{12}{5}}}^{2}\Big)\notag\\
	\leq &  C\|\nabla \theta\|_{L^{2}}\big(\|\operatorname{div} u\|_{L^{2}}\|\operatorname{div} u\|_{L^{3}}+ \|\operatorname{curl} u\|_{L^{2}}\|\operatorname{curl} u\|_{L^{3}}\big)\notag\\
	\leq &  C\|\nabla \theta\|_{L^{2}}
	\|\nabla u\|_{L^{2}}\big( \|F\|_{L^{3}}+\|\rho \theta\|_{L^{3}}+\|\nabla b\|_{L^{2}}^{2}+\|\operatorname{curl} u\|_{L^{3}}\big) \notag\\	
	\leq & C\|\rho\|_{L^{3}}^{\frac{1}{2}}\|\nabla \theta\|_{L^{2}}\|\nabla u\|_{L^{2}}\|\sqrt{\rho} \dot{u}\|_{L^{2}}+C \bar{\rho}^{\frac{1}{2}}\|\rho\|_{L^{3}}^{\frac{1}{2}}\|\nabla u\|_{L^{2}}\|\nabla \theta\|_{L^{2}}^{2} + C\|\nabla \theta\|_{L^{2}}\|\nabla u\|_{L^{2}}\|\nabla b\|_{L^{2}}^{2}.
\end{align}
Integration by parts together with H{\"o}lder's and Sobolev's inequalities leads to
\begin{align}\label{xxx5}
I_{3} &\leq C \int\big(|b||\nabla^{2} b||\theta|+|b||\nabla b||\nabla \theta|\big) \, d x \notag\\
&\leq C\|b\|_{L^{3}}\|\nabla^{2} b\|_{L^{2}}\|\nabla \theta\|_{L^{2}}\notag\\
&\leq C\|b\|_{L^{2}}^{\frac{1}{2}} \|\nabla b\|_{L^{2}}^{\frac{1}{2}}\|\nabla^{2} b\|_{L^{2}}\|\nabla \theta\|_{L^{2}}.
\end{align}

Thus, inserting \eqref{xxx3}--\eqref{xxx5} into \eqref{xxx2}, one gets that
\begin{align}\label{xxx6}
	 \frac{1}{2} \frac{d}{d t} \|\sqrt\rho \theta\|_{L^{2}}^{2}+ \|\nabla \theta\|_{L^{2}}^{2} &\leq C\|\rho\|_{L^{3}}^{\frac{1}{2}}\|\nabla \theta\|_{L^{2}}\|\nabla u\|_{L^{2}}\|\sqrt{\rho} \dot{u}\|_{L^{2}}+C \bar{\rho}^{\frac{1}{2}}\|\rho\|_{L^{3}}^{\frac{1}{2}}\|\nabla u\|_{L^{2}}\|\nabla \theta\|_{L^{2}}^{2} \notag\\
	&\quad +C\|b\|_{L^{2}}^{\frac{1}{2}}\|\nabla b\|_{L^{2}}^{\frac{1}{2}}\|\nabla^{2} b\|_{L^{2}}\|\nabla \theta\|_{L^{2}}+C\|\nabla \theta\|_{L^{2}}\|\nabla u\|_{L^{2}}\|\nabla^{2} b\|_{L^{2}}^{2}.
\end{align}
Choosing $\epsilon$ in \eqref{p1} sufficiently small such that
\begin{align}\label{sm2}
C \bar{\rho}^{\frac{1}{2}}\|\rho(t)\|_{L^{3}}^{\frac{1}{2}}\|\nabla u(t)\|_{L^{2}} \leq \frac{1}{2},
\end{align}
then one deduces from \eqref{xxx6} that
\begin{align}\label{xxx7}
\frac{d}{d t} \|\sqrt\rho \theta\|_{L^{2}}^{2}+ \|\nabla \theta\|_{L^{2}}^{2} \leq C\|\rho\|_{L^{3}}\|\nabla u\|_{L^{2}}^{2}\|\sqrt{\rho} \dot{u}\|_{L^{2}}^{2}+C\|b\|_{L^{2}} \|\nabla b\|_{L^{2}}\|\nabla^{2} b\|_{L^{2}}^{2}+C\|\nabla u\|_{L^{2}}^{2}\|\nabla b\|_{L^{2}}^{4}.
\end{align}
Integrating \eqref{xxx7} in $t$ over $(0, T)$, we obtain the desired \eqref{xxx1}.
\end{proof}

Then, we get the crucial estimate on $\int_{0}^{T} \|\sqrt{\rho} \dot{u}\|_{L^{2}}^{2} d t$.
\begin{lemma}\label{lem4}
Under the hypotheses of Proposition \ref{pro},  it holds that, for all $t \in[0, T]$,
\begin{align}\label{xxxx0}
	 \|\nabla u(t)\|_{L^{2}}^{2}+\int_{0}^{T} \|\sqrt{\rho} \dot{u}\|_{L^{2}}^{2} d t &\leq C \big(\|\nabla u_{0}\|_{L^{2}}^{2}+\bar{\rho} \|\sqrt{\rho_{0}}\theta_{0}\|_{2}^{2}+\|b_{0}\|_{L^{4}}^{4}\big)+C \bar{\rho} \|\sqrt{\rho}\theta(t)\|_{2}^{2}+C\|b(t)\|_{L^{4}}^{4}  \notag\\
	&\quad +C \sup _{t \in[0, T]} K_{1}(t)   \int_{0}^{T}\|\nabla \theta\|_{L^{2}}^{2} \, d t+C\int_{0}^{T} K_{2}(t) \, d t,
\end{align}
where \begin{align*}
K_{1}(t)\triangleq\bar{\rho}+\bar{\rho}^{2}\|\rho(t)\|_{L^{3}}\|\nabla u(t)\|_{L^{2}}^{2},
\end{align*}
and
\begin{align*}
K_{2}(t)&\triangleq C\Big(\bar{\rho}\|b\|_{L^{2}}\|\nabla b\|_{L^{2}}\|b_{t}\|_{L^{2}}^{2} + \|\rho\|_{L^{3}}\|\nabla u\|_{L^{2}}^{2}\||b|| \nabla b |\|_{L^{2}}^{2}+  \bar{\rho}\|b\|_{L^{2}}\|\nabla b\|_{L^{2}}\|\nabla^{2} b\|_{L^{2}}^{2} +  \bar{\rho}  \|\nabla u\|_{L^{2}}^{2}\|\nabla b\|_{L^{2}}^{4}  \\
&\quad +  \|\rho\|_{L^{3}}\|\nabla u\|_{L^{2}}^{2}\||b||\nabla b |\|_{L^{2}}^{2}+ \bar{\rho}\|b\|_{L^{2}}^{\frac{1}{2}}\|\nabla b\|_{L^{2}}^{\frac{1}{2}}\|b_{t}\|_{L^{2}}\|\nabla \theta\|_{L^{2}}+ \|b\|_{L^{2}}^{\frac{1}{2}}\|\nabla b\|_{L^{2}}^{\frac{1}{2}}\|b_{t}\|_{L^{2}}\||b||\nabla b|\|_{L^{2}}   \\
&\quad +  \|\nabla u\|_{L^{2}}\|\nabla b\|_{L^{2}}^{2}\||b|| \nabla b|  \|_{L^{2}}+ \bar{\rho}\|\nabla u\|_{L^{2}}\|\nabla \theta\|_{L^{2}}\|\nabla b\|_{L^{2}}^{2}+\| b\|_{L^{2}}^{\frac{1}{2}}\| \nabla b\|_{L^{2}}^{\frac{1}{2}}\| \nabla^{2}b\|_{L^{2}}\||b||\nabla b | \|_{L^{2}}   \\
&\quad +  \bar{\rho}^{\frac{1}{2}}\| \rho \|_{L^{3}}^{\frac{1}{2}} \|\nabla u\|_{L^{2}}\|\nabla \theta\|_{L^{2}}\||b||\nabla b | \|_{L^{2}}+\|\nabla \theta\|_{L^{2}}\| b \|_{L^{2}}^{\frac{1}{2}} \| \nabla b \|_{L^{2}}^{\frac{1}{2}} \| \nabla^{2} b \|_{L^{2}}+   \|b\|_{L^{2}} \|\nabla b\|_{L^{2}} \|\nabla^{2} b\|_{L^{2}}^{2}  \Big).
\end{align*}
\end{lemma}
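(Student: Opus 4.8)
The plan is to test the momentum balance against the material derivative $\dot u$. First I would use the continuity equation to write $\rho\dot u=(\rho u)_{t}+\operatorname{div}(\rho u\otimes u)$, so that \eqref{1}$_{2}$ becomes $\rho\dot u-\mu\Delta u-(\lambda+\mu)\nabla\operatorname{div} u+\nabla P=\operatorname{curl} b\times b$. Multiplying by $\dot u$ and integrating over $\mathbb{R}^{3}$, the left-hand side is exactly $\|\sqrt\rho\dot u\|_{L^{2}}^{2}$. Writing $\dot u=u_{t}+u\cdot\nabla u$ in the viscous terms, the $u_{t}$-part yields the energy derivative $-\tfrac12\tfrac{d}{dt}\big(\mu\|\nabla u\|_{L^{2}}^{2}+(\lambda+\mu)\|\operatorname{div} u\|_{L^{2}}^{2}\big)$, while the $u\cdot\nabla u$-part produces, after integration by parts, cubic corrections of the form $\int|\nabla u|^{3}\,dx$. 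These I would control by $C\|\nabla u\|_{L^{2}}^{3/2}\|\nabla u\|_{L^{6}}^{3/2}$ via Gagliardo--Nirenberg and then by \eqref{eq3} of Lemma \ref{lem}, so that, after Young's inequality, a small multiple of $\|\sqrt\rho\dot u\|_{L^{2}}^{2}$ is absorbed on the left while the remaining $\|\nabla u\|_{L^{2}}^{6}$-type quantity is rendered small by the bootstrap hypothesis \eqref{p1}.

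The pressure contribution $-\int\nabla P\cdot\dot u\,dx=\int P\operatorname{div}\dot u\,dx$ I would split through $\operatorname{div}\dot u=\operatorname{div} u_{t}+\nabla u:(\nabla u)^{\top}+u\cdot\nabla\operatorname{div} u$. The leading piece gives the total time derivative $\tfrac{d}{dt}\int P\operatorname{div} u\,dx-\int P_{t}\operatorname{div} u\,dx$. After integrating in time, the endpoint $\int P\operatorname{div} u\,dx$ is estimated at time $t$ by $\|\rho\theta\|_{L^{2}}\|\nabla u\|_{L^{2}}\le\bar\rho^{1/2}\|\sqrt\rho\theta\|_{L^{2}}\|\nabla u\|_{L^{2}}$, which by Young's inequality is absorbed into the leading viscous term on the left up to the boundary quantities $\bar\rho\|\sqrt\rho\theta(t)\|_{L^{2}}^{2}$ and $\bar\rho\|\sqrt{\rho_{0}}\theta_{0}\|_{L^{2}}^{2}$. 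For $\int P_{t}\operatorname{div} u\,dx$ I would eliminate $P_{t}$ by combining \eqref{1}$_{1}$ and \eqref{1}$_{3}$, namely $P_{t}=-\operatorname{div}(\rho u\theta)+\Delta\theta-P\operatorname{div} u+2\mu|\mathfrak{D}(u)|^{2}+\lambda(\operatorname{div} u)^{2}+|\operatorname{curl} b|^{2}$; integrating by parts, every resulting term pairs either $\nabla\theta$ or $\operatorname{div} u$ against flux quantities controlled by \eqref{eq1}--\eqref{eq3}, and Young's inequality collapses them into $\sup_{t}K_{1}(t)\int_{0}^{T}\|\nabla\theta\|_{L^{2}}^{2}\,dt$, the summands $\bar\rho$ and $\bar\rho^{2}\|\rho\|_{L^{3}}\|\nabla u\|_{L^{2}}^{2}$ in $K_{1}$ reflecting the lower-order and the velocity-weighted temperature couplings respectively.

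The magnetic forcing is the crux and the origin of $K_{2}$. I would exploit the identity $\operatorname{curl} b\times b=b\cdot\nabla b-\tfrac12\nabla|b|^{2}$, valid since $\operatorname{div} b=0$, and again pair against $\dot u=u_{t}+u\cdot\nabla u$. The part contracted with $u\cdot\nabla u$ is handled directly by H\"older, Sobolev and \eqref{eq3}. The part contracted with $u_{t}$ I would turn into a total time derivative, $\int(\operatorname{curl} b\times b)\cdot u_{t}\,dx=\tfrac{d}{dt}\int(\operatorname{curl} b\times b)\cdot u\,dx-\int\partial_{t}(\operatorname{curl} b\times b)\cdot u\,dx$; after an integration by parts the endpoint term is bounded by $C\|b\|_{L^{4}}^{2}\|\nabla u\|_{L^{2}}$, which by Young yields the boundary quantities $\|b(t)\|_{L^{4}}^{4}$ and $\|b_{0}\|_{L^{4}}^{4}$ plus an absorbable $\|\nabla u\|_{L^{2}}^{2}$. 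The genuinely hard term is $-\int\partial_{t}(\operatorname{curl} b\times b)\cdot u\,dx$: here I would substitute $b_{t}=\operatorname{curl}(u\times b)+\Delta b$ from \eqref{1}$_{4}$, so that $\partial_{t}(\operatorname{curl} b\times b)$ expands into contractions of $\operatorname{curl}(u\times b)$ and $\Delta b$ with $b$ and $u$. Distributing these with H\"older and Gagliardo--Nirenberg and using $\|\nabla^{2}b\|_{L^{2}}$, $\|b_{t}\|_{L^{2}}$, $\||b||\nabla b|\|_{L^{2}}$ as the natural magnetic norms produces precisely the long list assembled in $K_{2}(t)$.

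Keeping track of this expansion is the main obstacle, both because the vector identity mixes $b\cdot\nabla b$ with $\nabla|b|^{2}$ so that each piece must be re-integrated by parts before $b_{t}$ is inserted, and because the resulting products couple all three unknowns and must each be matched against a summand of $K_{2}$ without leaving an uncontrolled term. Once every term is bounded as above, I would integrate the differential identity over $(0,T)$, move the viscous energy derivative and the small, absorbable multiples of $\|\sqrt\rho\dot u\|_{L^{2}}^{2}$ and $\|\nabla u\|_{L^{2}}^{2}$ to the left, and collect the endpoint data together with the running quantities to arrive at \eqref{xxxx0}.
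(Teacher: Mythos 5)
Your overall architecture (an energy identity at the level of the material acceleration, the effective viscous flux, total-time-derivative tricks for the pressure and magnetic forcings) is the right one, but two of your specific choices break down precisely under the scaling-invariant hypothesis \eqref{p1}, and the paper's proof is engineered to avoid both.

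First, you test the momentum equation with $\dot u$ rather than with $u_t$. Splitting $\dot u=u_t+u\cdot\nabla u$ inside the viscous terms then produces the density-free cubic term $\int|\nabla u|^3\,dx$. Your bound $C\|\nabla u\|_{L^{2}}^{3/2}\|\nabla u\|_{L^{6}}^{3/2}$ together with \eqref{eq3} and Young's inequality leaves, after absorbing $\delta\|\sqrt{\rho}\dot u\|_{L^{2}}^{2}$, a remainder of size $C\bar{\rho}^{3}\|\nabla u\|_{L^{2}}^{6}$. Its time integral is at best $\bar{\rho}^{3}\sup_{t}\|\nabla u\|_{L^{2}}^{2}\cdot\big(\sup_{t}\|\nabla u\|_{L^{2}}^{2}\int_{0}^{T}\|\nabla u\|_{L^{2}}^{2}\,dt\big)$, and \eqref{p1} only makes the bracketed factor small; it gives no bound on $\bar{\rho}^{3}\sup_{t}\|\nabla u\|_{L^{2}}^{2}$ (only weighted combinations such as $\bar{\rho}\|\rho\|_{L^{3}}\|\nabla u\|_{L^{2}}^{2}$ are small, and $\|\rho_0\|_{L^3}$ may be tiny while $\bar\rho$ is huge). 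This is exactly why the paper multiplies by $u_t$ instead: the convective correction then appears as $\int\rho(u\cdot\nabla)u\cdot\dot u\,dx$, which carries the density weight and is bounded by $\|\rho\|_{L^{3}}\|u\|_{L^{6}}^{2}\|\nabla u\|_{L^{6}}^{2}$, absorbable thanks to the smallness of $\bar{\rho}\|\rho\|_{L^{3}}\|\nabla u\|_{L^{2}}^{2}$ in \eqref{sm4}. Your cubic term has no such weight and cannot be closed.

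Second, your treatment of $\int P_t\,\operatorname{div}u\,dx$ does not close as described. Substituting the pressure equation and integrating by parts, the term $\int\Delta\theta\,\operatorname{div}u\,dx=-\int\nabla\theta\cdot\nabla\operatorname{div}u\,dx$ pairs $\nabla\theta$ against $\nabla\operatorname{div}u$, which is \emph{not} one of the flux quantities controlled by \eqref{eq1}--\eqref{eq3}: writing $\nabla\operatorname{div}u=\tfrac{1}{2\mu+\lambda}\nabla(F+P+\tfrac12|b|^{2})$ exposes $\nabla P=\nabla(\rho\theta)$, hence $\nabla\rho$, which is unavailable at this stage. The paper circumvents this by the algebraic identity that converts $\int P_t\operatorname{div}u\,dx$ into exact time derivatives of $\|P\|_{L^{2}}^{2}$, $\int P|b|^{2}$, $\int PF$ plus the single remainder $\tfrac{1}{2\mu+\lambda}\int P_t(F+\tfrac12|b|^{2})\,dx$, in which only $\nabla F$ (controlled by \eqref{eq1}) and $\nabla|b|^{2}$ appear after integration by parts. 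You need this substitution of $\operatorname{div}u$ by the effective viscous flux \emph{before} estimating, not after. By contrast, your handling of the Lorentz force (pulling out $\tfrac{d}{dt}\int(\operatorname{curl}b\times b)\cdot u\,dx$ and eliminating $b_t$ via the induction equation) is essentially equivalent to the paper's version, which integrates by parts in space first and keeps $\|b_t\|_{L^{2}}$ explicitly in $K_{2}$; that part is a matter of bookkeeping only.
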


\begin{proof}
Multiplying \eqref{1}$_{2}$ by $u_{t}$ and integrating the resultant over $\mathbb{R}^{3}$ gives that
\begin{align}\label{xxxx1}
	&\frac{1}{2} \frac{d}{d t} \big(\mu\|\nabla u\|_{L^{2}}^{2}+(\mu+\lambda)\|\operatorname{div} u\|_{L^{2}}^{2}\big)+\|\sqrt{\rho} \dot{u}\|_{L^{2}}^{2} \notag\\
	&= \frac{d}{dt} \int  P \operatorname{div} u \, dx -\int P_{t}  \operatorname{div} u \, dx+\int \rho(u \cdot \nabla) u \cdot \dot{u}\, dx+\int b\cdot \nabla b\cdot u_{t}\, dx -\frac{1}{2} \int \nabla|b|^{2} \cdot u_{t}\, d x.
\end{align}
Using the definition of $F$ and the identity
\begin{align*}
	\frac{d}{d t} \int P \operatorname{div} u \, d x-\int P_{t} \operatorname{div} u \, d x
		= & \frac{d}{d t} \int P \operatorname{div} u \, d x-\frac{1}{2(2 \mu+\lambda)} \frac{d}{d t}\|P\|_{L^{2}}^{2} \\
		& -\frac{1}{2 \mu+\lambda} \int P_{t} F \, d x-\frac{1}{2(2 \mu+\lambda)} \int P_{t}|b|^{2} d x \\
		= & \frac{1}{2(2 \mu+\lambda)} \frac{d}{d t}\|P\|_{L^{2}}^{2}+\frac{1}{2(2 \mu+\lambda)} \frac{d}{d t} \int P|b|^{2}\, d x\\
		&+\frac{1}{2 \mu+\lambda} \frac{d}{d t} \int P F \,  d x  -\frac{1}{2 \mu+\lambda} \int P_{t}\Big(F+\frac{1}{2}|b|^{2}\Big)\, d x,
\end{align*}
the right-hand side of \eqref{xxxx1} can be written as	
\begin{align*}
&\frac{1}{2(2 \mu+\lambda)} \frac{d}{d t}\|P\|_{L^{2}}^{2}+\frac{1}{2(2 \mu+\lambda)} \frac{d}{d t} \int P|b|^{2} \, d x+\frac{1}{2 \mu+\lambda} \frac{d}{d t} \int P F \, d x \notag\\
&-\frac{1}{2 \mu+\lambda} \int P_{t}\Big(F+\frac{1}{2}|b|^{2}\Big) \, d x+\int  \rho(u \cdot \nabla) u \cdot \dot{u} \, dx+\int b\cdot \nabla b\cdot u_{t} \, dx-\frac{1}{2} \int \nabla|b|^{2} \cdot u_{t}\,  d x.
\end{align*}

From  \eqref{1}$_{2}$ and \eqref{1}$_{3}$, the pressure $P$ (recall \eqref{xz2}) satisfies
\begin{align*}
	P_{t}=-\operatorname{div}(Pu)- P \operatorname{div} u+\Delta \theta+\mathcal{Q}(\nabla u)+|\operatorname{curl} b|^{2},
\end{align*}
which leads to
\begin{align}\label{xxxx2}
	\int P_{t}\Big(F+\frac{1}{2}|b|^{2}\Big) \, d x
		&=\int\Big[\big(\mathcal{Q}(\nabla u)-P \operatorname{div} u+|\operatorname{curl} b|^{2}\big) F+( P u- \nabla \theta) \cdot \nabla F\Big] \, d x \notag\\
		&\quad+\frac{1}{2} \int\Big[\big(\mathcal{Q}(\nabla u)-P \operatorname{div} u+|\operatorname{curl} b|^{2}\big)|b|^{2}+( P u- \nabla \theta) \cdot \nabla|b|^{2}\Big] \, d x.
\end{align}
Using  $\|\nabla u\|_{L^{2}}^{2}=\|\operatorname{curl} u\|_{L^{2}}^{2}+\|\operatorname{div} u\|_{L^{2}}^{2}$ and $\int \rho(u \cdot \nabla) u \cdot \dot{u}\, dx\leq \frac{1}{2}\|\sqrt{\rho} \dot{u}\|_{L^{2}}^{2}+C\int  \rho|u|^{2}|\nabla u|^{2}\,  dx$, one obtains from \eqref{xxxx1} and \eqref{xxxx2} that
\begin{align}\label{xxxx3}
&\frac{1}{2} \frac{d}{d t}\bigg(\mu\|\operatorname{curl} u\|_{L^{2}}^{2}+\frac{\|F\|_{L^{2}}^{2}}{2 \mu+\lambda}+\frac{1}{(2 \mu+\lambda)} \int|b|^{2} F \, d x+\frac{\|b\|_{L^{4}}^{4}}{4(2 \mu+\lambda)}\bigg)+ \frac{1}{2}\|\sqrt{\rho} \dot{u}\|_{L^{2}}^{2} \notag\\
&\leq C \int  \rho|u|^{2}|\nabla u|^{2}\,  dx+\int b \cdot \nabla b \cdot u_{t} \, d x-\frac{1}{2} \int \nabla|b|^{2} \cdot u_{t}\,  d x \notag\\
&\quad-\frac{1}{2(2 \mu+\lambda)} \int\Big[\big(\mathcal{Q}(\nabla u)-P \operatorname{div} u+|\operatorname{curl} b|^{2}\big)|b|^{2}+( P u- \nabla \theta) \cdot \nabla|b|^{2}\Big] d x\notag\\
&\quad-\frac{1}{2 \mu+\lambda} \int\Big[\big(\mathcal{Q}(\nabla u)-P \operatorname{div} u+|\operatorname{curl} b|^{2}\big) F+( P u- \nabla \theta) \cdot \nabla F\Big] d x\triangleq\sum_{i=1}^{5} M_{i}.
\end{align}

By H\"{o}lder's, Young's, Gagliardo--Nirenberg inequalities, \eqref{eq1}, and \eqref{eq3}, we get that
\begin{align}
	M_{1} \leq& C\|\rho\|_{L^{3}}\|u\|_{L^{6}}^{2}\|\nabla u\|_{L^{6}}^{2} \notag\\
     \leq& C \bar{\rho}\|\rho\|_{L^{3}}\|\nabla u\|_{L^{2}}^{2}\|\sqrt{\rho} \dot{u}\|_{L^{2}}^{2}+C \bar{\rho}^{2}\|\rho\|_{L^{3}}\|\nabla u\|_{L^{2}}^{2} \|\nabla \theta\|_{L^{2}}^{2}+C\|\rho\|_{L^{3}}\|\nabla u\|_{L^{2}}^{2}\||b||\nabla b |\|_{L^{2}}^{2},\label{xxxx4}\\
	M_{2}= & -\frac{d}{d t} \int b \cdot \nabla u \cdot b \, d x+\int b_{t} \cdot \nabla u \cdot b \, d x+\int b \cdot \nabla u \cdot b_{t} \, d x \notag\\
	\leq & -\frac{d}{d t} \int b \cdot \nabla u \cdot b \, d x+C\|b\|_{L^{3}}\|b_{t}\|_{L^{2}}\|\nabla u\|_{L^{6}} \notag\\
	\leq & -\frac{d}{d t} \int b \cdot \nabla u \cdot b \, d x+C \|b\|_{L^{2}}^{\frac{1}{2}}\|\nabla b\|_{L^{2}}^{\frac{1}{2}}\|b_{t}\|_{L^{2}}\big(\bar{\rho}^{\frac{1}{2}}\|\sqrt{\rho} \dot{u}\|_{L^{2}}+\bar{\rho}\|\nabla \theta\|_{L^{2}}+\||b||  \nabla b | \|_{L^{2}}\big) \notag\\
	\leq & -\frac{d}{d t} \int b \cdot \nabla u \cdot b \, d x +C\bar{\rho}^{\frac{1}{2}}\|b\|_{L^{2}}^{\frac{1}{2}}\|\nabla b\|_{L^{2}}^{\frac{1}{2}}\|b_{t}\|_{L^{2}}\|\sqrt{\rho} \dot{u}\|_{L^{2}}+C\bar{\rho}\|b\|_{L^{2}}^{\frac{1}{2}}\|\nabla b\|_{L^{2}}^{\frac{1}{2}}\|b_{t}\|_{L^{2}}\|\nabla \theta\|_{L^{2}}\notag\\
	& +C\|b\|_{L^{2}}^{\frac{1}{2}}\|\nabla b\|_{L^{2}}^{\frac{1}{2}}\|b_{t}\|_{L^{2}}\||b||\nabla b|\|_{L^{2}}\label{xxxx5},\\
	M_{3}\leq & \frac{1}{2}\frac{d}{d t} \int |b|^{2} \operatorname{div}u \, d x +C\bar{\rho}^{\frac{1}{2}}\|b\|_{L^{2}}^{\frac{1}{2}}\|\nabla b\|_{L^{2}}^{\frac{1}{2}}\|b_{t}\|_{L^{2}}\|\sqrt{\rho} \dot{u}\|_{L^{2}}+C\bar{\rho}\|b\|_{L^{2}}^{\frac{1}{2}}\|\nabla b\|_{L^{2}}^{\frac{1}{2}}\|b_{t}\|_{L^{2}}\|\nabla \theta\|_{L^{2}}\notag\\
	& +C\|b\|_{L^{2}}^{\frac{1}{2}}\|\nabla b\|_{L^{2}}^{\frac{1}{2}}\|b_{t}\|_{L^{2}}\||b||\nabla b|\|_{L^{2}},\label{xxxx6}\\
	M_{4} \leq& C\|\nabla u\|_{L^{2}}\|\nabla u\|_{L^{6}}\|b^{2}\|_{L^{3}}+C\|\nabla u\|_{L^{2}}\|\rho \theta\|_{L^{6}}\|b^{2}\|_{L^{3}}+C\|\nabla b\|_{L^{2}}\|\nabla b\|_{L^{6}}\|b^{2}\|_{L^{3}}\notag\\
	& +C \|\rho\|_{L^{6}}\| u\|_{L^{6}}\|\theta\|_{L^{6}}\|\nabla(|b|^{2})\|_{L^{2}}+C \|\nabla \theta\|_{L^{2}}\|\nabla(|b|^{2})\|_{L^{2}}\notag\\
	 \leq& C\Big(\bar{\rho}^{\frac{1}{2}} \|\nabla u\|_{L^{2}}\|\sqrt{\rho} \dot{u}\|_{L^{2}}\|\nabla b\|_{L^{2}}^{2}+\bar{\rho}\|\nabla u\|_{L^{2}}\|\nabla \theta\|_{L^{2}}\|\nabla b\|_{L^{2}}^{2}+\bar{\rho}^{\frac{1}{2}}\| \rho \|_{L^{3}}^{\frac{1}{2}} \|\nabla u\|_{L^{2}}\|\nabla \theta\|_{L^{2}}\||b||\nabla b | \|_{L^{2}}\Big)\notag\\
	& +C \big(\|\nabla u\|_{L^{2}}\||b|| \nabla b |\|_{L^{2}}\|\nabla b\|_{L^{2}}^{2}+ \|\nabla \theta\|_{L^{2}}\| b \|_{L^{2}}^{\frac{1}{2}} \| \nabla b \|_{L^{2}}^{\frac{1}{2}} \| \nabla^{2} b \|_{L^{2}}+  \|b\|_{L^{2}} \|\nabla b\|_{L^{2}} \|\nabla^{2} b\|_{L^{2}}^{2}\big)\label{xxxx7},\\
	M_{5}  \leq& C\Big(\|\nabla F\|_{L^{2}}\|\nabla u\|_{L^{\frac{12}{5}}}^{2}+ \|\nabla u\|_{L^{2}}\|\rho \theta\|_{L^{3}}\|\nabla F\|_{L^{2}}+ \| b\|_{L^{3}}\|\nabla^{2} b\|_{L^{2}}\|\nabla F\|_{L^{2}}\Big)\notag\\
	&  +C\big( \|\rho\|_{L^{6}}\| u\|_{L^{6}}\|\theta\|_{L^{6}}\|\nabla F\|_{L^{2}}+  \|\nabla \theta\|_{L^{2}}\|\nabla F\|_{L^{2}}\big)\notag\\
	 \leq& C \Big(\bar{\rho}^{\frac{1}{2}}\|\nabla \theta\|_{L^{2}} \|\sqrt{\rho} \dot{u}\|_{L^{2}}+ \|\nabla \theta\|_{L^{2}}\| b \|_{L^{2}}^{\frac{1}{2}} \| \nabla b \|_{L^{2}}^{\frac{1}{2}} \| \nabla^{2} b \|_{L^{2}}+ \bar{\rho}^{\frac{1}{2}} \|\sqrt{\rho} \dot{u}\|_{L^{2}}  \| b\|_{L^{2}}^{\frac{1}{2}}\| \nabla b\|_{L^{2}}^{\frac{1}{2}}\| \nabla^{2}b\|_{L^{2}}\Big)\notag\\
	&  +C\Big(\| b\|_{L^{2}}^{\frac{1}{2}}\| \nabla b\|_{L^{2}}^{\frac{1}{2}}\| \nabla^{2}b\|_{L^{2}}\||b||\nabla b | \|_{L^{2}}+ \bar{\rho}^{\frac{1}{2}} \| \rho\|_{L^{3}}^{\frac{1}{2}}\| \nabla u\|_{L^{2}}\| \nabla \theta\|_{L^{2}} \||b||\nabla b | \|_{L^{2}}\Big)\notag\\
	&  +C\Big(\bar{\rho}^{\frac{1}{2}}\| \rho\|_{L^{3}}^{\frac{1}{2}}\| \nabla u\|_{L^{2}}\|\sqrt{\rho} \dot{u}\|_{L^{2}}^{2}+ \| \rho\|_{L^{3}}^{\frac{1}{2}}\| \nabla u\|_{L^{2}}\|\sqrt{\rho} \dot{u}\|_{L^{2}}\||b||\nabla b | \|_{L^{2}}+ \bar{\rho}^{\frac{1}{2}}\| \nabla u\|_{L^{2}}\|\sqrt{\rho} \dot{u}\|_{L^{2}}\| \nabla b\|_{L^{2}}^{2}\Big)\notag\\
	&  +C\Big(\| \nabla u\|_{L^{2}} \| \nabla b\|_{L^{2}}^{2}\||b||\nabla b | \|_{L^{2}}+ \bar{\rho} \| \rho\|_{L^{3}}^{\frac{1}{2}}\| \nabla u\|_{L^{2}}\| \nabla \theta\|_{L^{2}}  \|\sqrt{\rho} \dot{u}\|_{L^{2}}\Big).\label{xxxx8}
\end{align}

Hence, putting \eqref{xxxx4}--\eqref{xxxx8} into \eqref{xxxx3} and using Cauchy--Schwarz inequality, we derive that
\begin{align}\label{xxxx9}
&\frac{d}{d t}  \mathcal{E}(t) +\|\sqrt{\rho} \dot{u}\|_{L^{2}}^{2}\notag\\
&\leq C \big(\bar{\rho}\|\rho\|_{L^{3}}\|\nabla u\|_{L^{2}}^{2}\|\sqrt{\rho} \dot{u}\|_{L^{2}}^{2}+  \bar{\rho}^{2}\|\rho\|_{L^{3}}\|\nabla u\|_{L^{2}}^{2} \|\nabla \theta\|_{L^{2}}^{2}+ \|\rho\|_{L^{3}}\|\nabla u\|_{L^{2}}^{2}\||b||\nabla b |\|_{L^{2}}^{2}\big)\notag\\
&\quad +C\Big(\bar{\rho}^{\frac{1}{2}}\|b\|_{L^{2}}^{\frac{1}{2}}\|\nabla b\|_{L^{2}}^{\frac{1}{2}}\|b_{t}\|_{L^{2}}\|\sqrt{\rho} \dot{u}\|_{L^{2}}+\bar{\rho}\|b\|_{L^{2}}^{\frac{1}{2}}\|\nabla b\|_{L^{2}}^{\frac{1}{2}}\|b_{t}\|_{L^{2}}\|\nabla \theta\|_{L^{2}}+ \|b\|_{L^{2}}^{\frac{1}{2}}\|\nabla b\|_{L^{2}}^{\frac{1}{2}}\|b_{t}\|_{L^{2}}\||b||\nabla b|\|_{L^{2}}\Big)\notag\\
&\quad +C\Big(\bar{\rho}^{\frac{1}{2}} \|\nabla u\|_{L^{2}}\|\sqrt{\rho} \dot{u}\|_{L^{2}}\|\nabla b\|_{L^{2}}^{2}+\bar{\rho}\|\nabla u\|_{L^{2}}\|\nabla \theta\|_{L^{2}}\|\nabla b\|_{L^{2}}^{2}+\|\nabla u\|_{L^{2}}\||b|| \nabla b |\|_{L^{2}}\|\nabla b\|_{L^{2}}^{2}\Big)\notag\\
&\quad+C\Big(\bar{\rho}^{\frac{1}{2}}\| \rho \|_{L^{3}}^{\frac{1}{2}} \|\nabla u\|_{L^{2}}\|\nabla \theta\|_{L^{2}}\||b||\nabla b | \|_{L^{2}}+ \|\nabla \theta\|_{L^{2}}\| b \|_{L^{2}}^{\frac{1}{2}} \| \nabla b \|_{L^{2}}^{\frac{1}{2}} \| \nabla^{2} b \|_{L^{2}}+  \|b\|_{L^{2}} \|\nabla b\|_{L^{2}} \|\nabla^{2} b\|_{L^{2}}^{2}\Big)  \notag\\
&\quad+C\Big(\bar{\rho}^{\frac{1}{2}}\|\nabla \theta\|_{L^{2}} \|\sqrt{\rho} \dot{u}\|_{L^{2}}+ \bar{\rho}^{\frac{1}{2}} \|\sqrt{\rho} \dot{u}\|_{L^{2}}  \| b\|_{L^{2}}^{\frac{1}{2}}\| \nabla b\|_{L^{2}}^{\frac{1}{2}}\| \nabla^{2}b\|_{L^{2}}+ \| b\|_{L^{2}}^{\frac{1}{2}}\| \nabla b\|_{L^{2}}^{\frac{1}{2}}\| \nabla^{2}b\|_{L^{2}}\||b||\nabla b | \|_{L^{2}}\Big)\notag\\
&\quad +C\Big(\bar{\rho}^{\frac{1}{2}}\| \rho\|_{L^{3}}^{\frac{1}{2}}\| \nabla u\|_{L^{2}}\|\sqrt{\rho} \dot{u}\|_{L^{2}}^{2}+ \| \rho\|_{L^{3}}^{\frac{1}{2}}\| \nabla u\|_{L^{2}}\|\sqrt{\rho} \dot{u}\|_{L^{2}}\||b||\nabla b | \|_{L^{2}} +  \bar{\rho} \| \rho\|_{L^{3}}^{\frac{1}{2}}\| \nabla u\|_{L^{2}}\| \nabla \theta\|_{L^{2}}  \|\sqrt{\rho} \dot{u}\|_{L^{2}}\Big)\notag\\
&\leq \Big(C \bar{\rho}\|\rho\|_{L^{3}}\|\nabla u\|_{L^{2}}^{2}+C \bar{\rho}^{\frac{1}{2}}\|\rho\|_{L^{3}}^{\frac{1}{2}} \|\nabla u\|_{L^{2}}+\frac{1}{16}\Big)\|\sqrt{\rho} \dot{u}\|_{L^{2}}^{2}+CK_{1}(t)\|\nabla \theta\|_{L^{2}}^{2}+CK_{2}(t),
\end{align}
where
\begin{align*}
\mathcal{E}(t)\triangleq\mu\|\operatorname{curl} u\|_{L^{2}}^{2}+\frac{\|F\|_{L^{2}}^{2}}{2 \mu+\lambda}+\frac{1}{2 \mu+\lambda} \int|b|^{2} F \, d x+\frac{\|b\|_{L^{4}}^{4}}{4(2 \mu+\lambda)} +\int b \cdot \nabla u \cdot b \, d x-\frac{1}{2} \int|b|^{2} \operatorname{div} u \, d x.
\end{align*}
Choosing $\epsilon$ in \eqref{p1} sufficiently small such that
\begin{align}\label{sm4}
C\max\Big\{ \bar{\rho}\|\rho(t)\|_{L^{3}}\|\nabla u(t)\|_{L^{2}}^{2},~~\bar{\rho}^{\frac{1}{2}}\|\rho(t)\|_{L^{3}}^{\frac{1}{2}} \|\nabla u(t)\|_{L^{2}}\Big\} \leq \frac{1}{4},
\end{align}
then one infers from \eqref{xxxx9} that
\begin{align}\label{xiu3}
\frac{d}{d t} \mathcal{E}(t)+\|\sqrt{\rho} \dot{u}(t)\|_{L^{2}}^{2} \leq C\left(K_{1}(t)\|\nabla \theta\|_{L^{2}}^{2}+K_{2}(t)\right).
\end{align}
Noting that, for any $\delta>0$,
\begin{align*}
	\left|\int\bigg(\frac{1}{2 \mu+\lambda} |b|^{2} F  +b \cdot \nabla u \cdot b-\frac{1}{2} |b|^{2} \operatorname{div} u\bigg) d x\right|
	&\leq \delta \|\nabla u\|_{L^{2}}^{2}+\delta \|F\|_{L^{2}}^{2}+C\|b\|_{L^{4}}^{4}\\
	&\leq 2\delta \|\nabla u\|_{L^{2}}^{2}
	+C\big(\|b\|_{L^{4}}^{4}+\bar{\rho} \|\sqrt{\rho}\theta\|_{L^{2}}^{2} \big),
\end{align*}
and
\begin{align*}
	\|\nabla u\|_{L^{2}}^{2}
	\leq C\left(\|\operatorname{curl} u\|_{L^{2}}^{2}
	+\|F\|_{L^{2}}^{2}
	+\bar{\rho} \|\sqrt{\rho}\theta\|_{L^{2}}^{2}
	+\|b\|_{L^{4}}^{4}\right).
\end{align*}
Integrating \eqref{xiu3} over $(0,T)$ and choosing $\delta$ sufficiently small, then one gets the desired \eqref{xxxx0}.
\end{proof}

\begin{lemma}\label{lem5}
Under the hypotheses of Proposition \ref{pro},  it holds that, for all $t \in[0, T]$,
\begin{align}\label{xxxxx0}
	&  \|\nabla b(t)\|_{L^{2}}^{2}+\|b(t)\|_{L^{4}}^{4} +\int_{0}^{T}\big(\|b_{t}\|_{L^{2}}^{2}+\|\nabla^{2} b\|_{L^{2}}^{2}+\|| b|  |\nabla b| \|_{L^{2}}^{2}\big) d t \notag\\
	&\leq C\big(\|\nabla b_{0}\|_{L^{2}}^{2}+\|b_{0}\|_{L^{4}}^{4}\big)+ C\sup _{t \in[0, T]}(\|b(t)\|_{L^{2}}^{2}\|\nabla u(t)\|_{L^{2}}^{2}) \sup _{t \in[0, T]}\|\nabla u(t)\|_{L^{2}}^{4}\int_{0}^{T}\|\nabla u\|_{L^{2}}^{2}\, d t.
\end{align}
\end{lemma}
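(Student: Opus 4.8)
The plan is to combine an $H^{1}$-type estimate and an $L^{4}$-type estimate for the induction equation \eqref{1}$_{4}$. Using $\operatorname{div}b=0$ and the identity $\operatorname{curl}(u\times b)=(b\cdot\nabla)u-(u\cdot\nabla)b-b\operatorname{div}u$, the equation reads $b_{t}-\Delta b=(b\cdot\nabla)u-(u\cdot\nabla)b-b\operatorname{div}u$. First I would test with $b_{t}$ and use Cauchy--Schwarz to obtain $\tfrac12\tfrac{d}{dt}\|\nabla b\|_{L^{2}}^{2}+\tfrac12\|b_{t}\|_{L^{2}}^{2}\le C\big(\int|b|^{2}|\nabla u|^{2}\,dx+\int|u|^{2}|\nabla b|^{2}\,dx\big)$, then recover $\|\nabla^{2}b\|_{L^{2}}^{2}=\|\Delta b\|_{L^{2}}^{2}$ from the equation, so that after absorbing a small multiple of $\|b_{t}\|_{L^{2}}^{2}$ one gets $\tfrac{d}{dt}\|\nabla b\|_{L^{2}}^{2}+c\big(\|b_{t}\|_{L^{2}}^{2}+\|\nabla^{2}b\|_{L^{2}}^{2}\big)\le C\big(\int|b|^{2}|\nabla u|^{2}\,dx+\int|u|^{2}|\nabla b|^{2}\,dx\big)$. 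Second, testing with $|b|^{2}b$ and integrating by parts yields $\tfrac14\tfrac{d}{dt}\|b\|_{L^{4}}^{4}+\||b||\nabla b|\|_{L^{2}}^{2}+\tfrac12\|\nabla|b|^{2}\|_{L^{2}}^{2}\le C\int|b|^{4}|\nabla u|\,dx$, the two coupling terms carrying $\operatorname{div}u$ combining after one further integration by parts.

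The heart of the matter is to estimate the three forcing integrals so that $u$ enters \emph{only} through $\|\nabla u\|_{L^{2}}$. For $\int|u|^{2}|\nabla b|^{2}$ I would use $\|u\|_{L^{6}}\le C\|\nabla u\|_{L^{2}}$ together with $\|\nabla b\|_{L^{3}}^{2}\le C\|\nabla b\|_{L^{2}}\|\nabla^{2}b\|_{L^{2}}$, obtaining $\le\eta\|\nabla^{2}b\|_{L^{2}}^{2}+C\|\nabla u\|_{L^{2}}^{4}\|\nabla b\|_{L^{2}}^{2}$. For the cubic term I would write $\int|b|^{4}|\nabla u|\le\|\nabla u\|_{L^{2}}\||b|^{2}\|_{L^{4}}^{2}$ and apply Gagliardo--Nirenberg to $w=|b|^{2}$, namely $\|w\|_{L^{4}}^{2}\le C\|w\|_{L^{2}}^{1/2}\|\nabla w\|_{L^{2}}^{3/2}$ with $\|\nabla w\|_{L^{2}}\le 2\||b||\nabla b|\|_{L^{2}}$, and then Young's inequality to get $\le\eta\||b||\nabla b|\|_{L^{2}}^{2}+C\|\nabla u\|_{L^{2}}^{4}\|b\|_{L^{4}}^{4}$. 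The delicate term is $\int|b|^{2}|\nabla u|^{2}$: the naive Hölder splitting would force $\|\nabla u\|_{L^{3}}$ or $\|\nabla u\|_{L^{6}}$ and hence, via \eqref{eq3}, the quantities $\sqrt{\rho}\dot{u}$ and $\nabla\theta$, which are absent from the target. Instead I would bound $\int|b|^{2}|\nabla u|^{2}\le\|b\|_{L^{\infty}}^{2}\|\nabla u\|_{L^{2}}^{2}$ and use $\|b\|_{L^{\infty}}^{2}\le C\|b\|_{L^{2}}^{1/2}\|\nabla^{2}b\|_{L^{2}}^{3/2}$; Young's inequality then produces $\le\eta\|\nabla^{2}b\|_{L^{2}}^{2}+C\|\nabla u\|_{L^{2}}^{8}\|b\|_{L^{2}}^{2}$. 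It is exactly this genuinely source-like term that, upon integrating in time through $\int_{0}^{T}\|\nabla u\|_{L^{2}}^{8}\|b\|_{L^{2}}^{2}\,dt\le\sup_{t}(\|b\|_{L^{2}}^{2}\|\nabla u\|_{L^{2}}^{2})\sup_{t}\|\nabla u\|_{L^{2}}^{4}\int_{0}^{T}\|\nabla u\|_{L^{2}}^{2}\,dt$, reproduces the second term on the right-hand side of \eqref{xxxxx0}.

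Finally I would add the two estimates, choose $\eta$ small to absorb every $\eta$-term into the dissipation on the left, and set $\Phi\triangleq\|\nabla b\|_{L^{2}}^{2}+\|b\|_{L^{4}}^{4}$ to reach $\tfrac{d}{dt}\Phi+c\big(\|b_{t}\|_{L^{2}}^{2}+\|\nabla^{2}b\|_{L^{2}}^{2}+\||b||\nabla b|\|_{L^{2}}^{2}\big)\le C\|\nabla u\|_{L^{2}}^{4}\Phi+C\|\nabla u\|_{L^{2}}^{8}\|b\|_{L^{2}}^{2}$. Integrating over $(0,T)$ and invoking the a priori hypothesis \eqref{p1}, which gives $\int_{0}^{T}\|\nabla u\|_{L^{2}}^{4}\,dt\le\sup_{t}\|\nabla u\|_{L^{2}}^{2}\int_{0}^{T}\|\nabla u\|_{L^{2}}^{2}\,dt\le 2\epsilon$, I would absorb $C\int_{0}^{T}\|\nabla u\|_{L^{2}}^{4}\Phi\,dt\le 2C\epsilon\sup_{t}\Phi$ into the left-hand side for $\epsilon$ sufficiently small, which delivers \eqref{xxxxx0}. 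The main obstacle is precisely the term $\int|b|^{2}|\nabla u|^{2}$: the crux is to avoid the estimate that would leak $\sqrt{\rho}\dot{u}$ and $\nabla\theta$, and instead route it through $\|b\|_{L^{\infty}}\le C\|b\|_{L^{2}}^{1/4}\|\nabla^{2}b\|_{L^{2}}^{3/4}$ so that only $\|\nabla u\|_{L^{2}}$ and $\|b\|_{L^{2}}$ survive, in accordance with the scaling-invariant bookkeeping of \eqref{p1}.
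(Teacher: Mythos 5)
Your proposal is correct and follows essentially the same route as the paper: the $H^1$ estimate via $b_t-\Delta b$, the $L^4$ estimate via testing with $|b|^2b$, and crucially the bound $\int|b|^2|\nabla u|^2\,dx\le C\|b\|_{L^2}^{1/2}\|\nabla^2 b\|_{L^2}^{3/2}\|\nabla u\|_{L^2}^2$ that yields the source term $\|b\|_{L^2}^2\|\nabla u\|_{L^2}^8$. The only (harmless) deviation is that you keep Gr\"onwall-type terms $C\|\nabla u\|_{L^2}^4\Phi$ and absorb them using $\int_0^T\|\nabla u\|_{L^2}^4\,dt\le 2\epsilon$ from \eqref{p1}, whereas the paper converts everything into the pure source term by writing $\|\nabla b\|_{L^2}^2\le\|b\|_{L^2}\|\nabla^2 b\|_{L^2}$ and invoking the smallness condition \eqref{sm5}.
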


\begin{proof}
According to \eqref{1}$_{4}$, $\operatorname{curl}(u \times b)=b \cdot \nabla u-u \cdot \nabla b-\operatorname{div} u \cdot b$, and Gagliardo--Nirenberg inequality, one gets that
\begin{align}\label{xxxxx1}
	\frac{d}{d t}\|\nabla b\|_{L^{2}}^{2}+\|b_{t}\|_{L^{2}}^{2}+\|\nabla^{2} b\|_{L^{2}}^{2} & =\int\left|b_{t}-\Delta b\right|^{2} d x=\int|b \cdot \nabla u-u \cdot \nabla b-b \operatorname{div} u|^{2} \, d x \notag\\
	& \leq C\|\nabla u\|_{L^{2}}^{2}\|b\|_{L^{\infty}}^{2}+C\|u\|_{L^{6}}^{2}\|\nabla b\|_{L^{3}}^{2} \notag\\
	&\leq C\|b\|_{L^{2}}^{\frac{1}{2}} \|\nabla^{2} b \|_{L^{2}}^{\frac{3}{2}}\|\nabla u\|_{L^{2}}^{2} \notag\\
	&\leq \frac{1}{8} \|\nabla^{2} b \|_{L^{2}}^{2}+C\left(\|b\|_{L^{2}}^{2}\|\nabla u\|_{L^{2}}^{2}\right)\|\nabla u\|_{L^{2}}^{6}.
\end{align}
Multiplying \eqref{1}$_{4}$ by $4|b|^{2} b$ and integration by parts, we use Sobolev's inequality to obtain that
\begin{align*}
	\frac{d}{d t} \|b\|_{L^{4}}^{4}+4 \int\big(|\nabla b|^{2}|b|^{2}+2|\nabla|b||^{2}|b|^{2}\big) \, d x &=4 \int b \cdot \nabla u \cdot b|b|^{2} d x-3 \int|b|^{4} \operatorname{div} u \, d x \\
	&\leq C \int|b|^{2}|\nabla u|^{2} \, d x+C \int|b|^{6} \, d x \\
	&\leq C\|b\|_{L^{\infty}}^{2}\|\nabla u\|_{L^{2}}^{2}+C\|b\|_{L^{6}}^{2}\||b|^{2}\|_{L^{6}}\||b|^{2}\|_{L^{2}} \\
	&\leq C\|b\|_{L^{2}}^{\frac{1}{2}}\|\nabla^{2} b\|_{L^{2}}^{\frac{3}{2}}\|\nabla u\|_{L^{2}}^{2}+C\||b||\nabla b|\|_{L^{2}}\|b\|_{L^{4}}^{2}\|\nabla b\|_{L^{2}}^{2}.
\end{align*}
The last term can be estimated by Gagliardo--Nirenberg and Cauchy--Schwarz inequalities,
\begin{align*}
	C\| |b||\nabla b|\|_{L^{2}}\|b\|_{L^{4}}^{2}\|\nabla b\|_{L^{2}}^{2} & \leq C \| |b||\nabla b |\|_{L^{2}}\|b\|_{L^{3}}\|\nabla b\|_{L^{2}}\|b\|_{L^{2}} \|\nabla^{2} b \|_{L^{2}} \\
	& \leq C\| |b||\nabla b|\|_{L^{2}} \| b \|_{L^{2}}^{\frac{3}{2}} \| \nabla b \|_{L^{2}}^{\frac{3}{2}}\| \nabla^{2} b \|_{L^{2}} \\
	& \leq \frac{1}{2}\||b||\nabla b|\|_{L^{2}}^{2}+C \| b \|_{L^{2}}^{3} \| \nabla b \|_{L^{2}}^{3} \| \nabla^{2} b \|_{L^{2}}^{2}.
\end{align*}
Thus, one has that
\begin{align}\label{xiu2}
	\frac{d}{d t}\|b\|_{L^{4}}^{4}+\||b||\nabla b|\|_{L^{2}}^{2} \leq \frac{1}{8} \|\nabla^{2} b \|_{L^{2}}^{2}+C\big(\|b\|_{L^{2}}^{2}\|\nabla b\|_{L^{2}}^{2}\big)^{\frac{3}{2}} \|\nabla^{2} b \|_{L^{2}}^{2} +C\big(\|b\|_{L^{2}}^{2}\|\nabla u\|_{L^{2}}^{2}\big)\|\nabla u\|_{L^{2}}^{6}.
\end{align}
Combining \eqref{xxxxx1} and \eqref{xiu2}, and using the {\it a priori} assumption \eqref{p1} to choose $\epsilon$  sufficiently small such that for any $t$
\begin{align}\label{sm5}
	C\big(\|b(t)\|_{L^{2}}^{2}\|\nabla b(t)\|_{L^{2}}^{2}\big)^{\frac{3}{2}} \leq \frac{1}{4},
\end{align}
one arrives at
\begin{align}\label{xxxxx2}
	\frac{d}{d t}\big(\|\nabla b\|_{L^{2}}^{2}+\|b\|_{L^{4}}^{4}\big)+\|b_{t}\|_{L^{2}}^{2}+\|\nabla^{2} b\|_{L^{2}}^{2}+\||b||\nabla b|\|_{L^{2}}^{2} \leq C\big(\|b\|_{L^{2}}^{2}\|\nabla u\|_{L^{2}}^{2}\big)\|\nabla u\|_{L^{2}}^{6}.
\end{align}
Consequently, integrating \eqref{xxxxx2} in $t$ over $(0, T)$ leads to \eqref{xxxxx0}.
\end{proof}

\begin{corollary}\label{co1}
Under the hypotheses of Proposition \ref{pro}, it holds that, for all $t \in[0, T]$,
\begin{align}\label{zui}
S_{t}+( \bar{\rho}^{2}+\bar{\rho}) \sup _{t \in[0, T]}\|\rho(t)\|_{L^{3}}\int_{0}^{T}\|\nabla \theta\|_{L^{2}}^{2} d t   \leq \frac{3}{2} \epsilon.
\end{align}
\end{corollary}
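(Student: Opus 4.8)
The plan is to estimate the product $S_{t}=(1+\bar{\rho}+\frac{1}{\bar{\rho}})S_{t}'S_{t}''$ and the density--temperature term in \eqref{zui} by controlling the two factors $S_{t}'$ and $S_{t}''$ separately and then multiplying. The guiding principle is that every quantity produced by Lemmas \ref{lem1}--\ref{lem5} splits into an \emph{initial-data part} that reassembles into $S_{0}'$ or $S_{0}''$, and a \emph{nonlinear part} each of whose summands carries at least one factor that is $\le 2\epsilon$ by the \emph{a priori} hypothesis \eqref{p1}; the latter is therefore absorbable once $\epsilon$ is small. Since $N_{0}=(1+\bar{\rho}+\frac{1}{\bar{\rho}})S_{0}'S_{0}''\le\varepsilon_{0}$ and $C_{1}\varepsilon_{0}\le\epsilon$ with $C_{1}$ chosen large, this will produce the asserted bound $\frac{3}{2}\epsilon$.

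\textbf{Step 1 (bounding $S_{t}'$).} I would insert the energy identity \eqref{x0} into the density estimate \eqref{xx1}, which gives $\sup_{t}\|\rho(t)\|_{L^{3}}\le C\,S_{0}'+C\sup_{t}\|\rho(t)\|_{L^{3}}\,[(\bar{\rho}^{2}+\bar{\rho})\sup_{t}\|\rho\|_{L^{3}}\int_{0}^{T}\|\nabla\theta\|_{L^{2}}^{2}\,dt]$, where the bracketed quantity is $\le 2\epsilon$ by \eqref{p1}. For $\epsilon$ small the last term is absorbed, so $\sup_{t}\|\rho(t)\|_{L^{3}}\le C\,S_{0}'$. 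Multiplying \eqref{x0} by $(\bar{\rho}^{2}+\bar{\rho})$ and treating the analogous small term the same way yields $(\bar{\rho}^{2}+\bar{\rho})(\|\sqrt{\rho}u\|_{L^{2}}^{2}+\|b\|_{L^{2}}^{2})\le C\,S_{0}'$, hence $\sup_{t}S_{t}'\le C\,S_{0}'$.

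\textbf{Step 2 (bounding $S_{t}''$ and the dissipation $\int_{0}^{T}\|\nabla\theta\|_{L^{2}}^{2}$).} This is the coupled core. First I use \eqref{xxxxx0} to dominate $\int_{0}^{T}(\|b_{t}\|_{L^{2}}^{2}+\|\nabla^{2}b\|_{L^{2}}^{2}+\||b||\nabla b|\|_{L^{2}}^{2})$ by $C(\|\nabla b_{0}\|_{L^{2}}^{2}+\|b_{0}\|_{L^{4}}^{4})$ plus a remainder controlled by the last term of \eqref{p1}. I then substitute this, together with the $\int_{0}^{T}\|\sqrt{\rho}\dot{u}\|_{L^{2}}^{2}$ bound from \eqref{xxxx0}, into the temperature estimate \eqref{xxx1} and the velocity estimate \eqref{xxxx0}. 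Every cross term --- the $K_{1}$ and $K_{2}$ contributions, the products such as $\sup_{t}(\|\nabla u\|_{L^{2}}^{2}\|\rho\|_{L^{3}})\int_{0}^{T}\|\sqrt{\rho}\dot{u}\|_{L^{2}}^{2}$, and the various magnetic products --- either reproduces one of the three quantities appearing in \eqref{p1} or is a product of two factors each $\le C\epsilon$; by Cauchy--Schwarz and Young's inequalities, using $\sup_{t}\|\rho\|_{L^{3}}\le C S_{0}'$ from Step 1 and $\rho\le 2\bar{\rho}$, these are absorbed into the dissipation terms on the left. The outcome is $\sup_{t}S_{t}''+\int_{0}^{T}\|\nabla\theta\|_{L^{2}}^{2}\le C\,S_{0}''+C\epsilon\,S_{0}''$, hence $\sup_{t}S_{t}''\le C\,S_{0}''$.

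\textbf{Step 3 (assembling) and the main obstacle.} Combining Steps 1 and 2 gives $S_{t}=(1+\bar{\rho}+\frac{1}{\bar{\rho}})S_{t}'S_{t}''\le C(1+\bar{\rho}+\frac{1}{\bar{\rho}})S_{0}'S_{0}''=C\,N_{0}$. For the density--temperature term I bound $(\bar{\rho}^{2}+\bar{\rho})\sup_{t}\|\rho\|_{L^{3}}\int_{0}^{T}\|\nabla\theta\|_{L^{2}}^{2}\,dt$ via $\sup_{t}\|\rho\|_{L^{3}}\le C S_{0}'$ and $\int_{0}^{T}\|\nabla\theta\|_{L^{2}}^{2}\le C\|\sqrt{\rho_{0}}\theta_{0}\|_{L^{2}}^{2}+C\epsilon S_{0}''$ from Step 2, using the elementary weight comparison $(\bar{\rho}^{2}+\bar{\rho})\le C(1+\bar{\rho}+\frac{1}{\bar{\rho}})(\bar{\rho}+1)$ so that this term also repackages into a multiple of $N_{0}$. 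Thus the left side of \eqref{zui} is $\le C\,N_{0}\le C\varepsilon_{0}\le(C/C_{1})\epsilon$, and choosing $C_{1}$ large enough gives $\le\frac{3}{2}\epsilon$. The main obstacle is Step 2 together with this weight bookkeeping: one must verify that \emph{every} summand in the coupled estimates \eqref{xxx1}, \eqref{xxxx0}, \eqref{xxxxx0} --- in particular the lengthy $K_{2}(t)$ --- carries both enough smallness from \eqref{p1} to be absorbed and the correct power of $\bar{\rho}$, so that the surviving terms recombine exactly into the weighted structure $(1+\bar{\rho}+\frac{1}{\bar{\rho}})S_{0}'S_{0}''$; the appearance of the factor $\frac{1}{\bar{\rho}}$ and the precise split of the $\bar{\rho}$-powers between $S_{t}'$ and $S_{t}''$ is precisely what makes the scaling-invariant bootstrap close without the restriction $3\mu>\lambda$.
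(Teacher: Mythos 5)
Your proposal follows essentially the same route as the paper: bound $S_t'$ by a linear combination of \eqref{x0} and \eqref{xx1} with the cross term absorbed via \eqref{p1}, bound $S_t''$ by combining \eqref{xxx1}, \eqref{xxxx0}, and \eqref{xxxxx0} with all $K_1$, $K_2$, and magnetic cross terms absorbed by the smallness hypotheses, and then multiply by the weight $\big(1+\bar\rho+\tfrac{1}{\bar\rho}\big)$ and invoke $\epsilon\ge C_1\epsilon_0$. The one refinement needed for your Step 3 bookkeeping to close is the \emph{weighted} dissipation bound $(\bar\rho+1)\int_0^T\|\nabla\theta\|_{L^2}^2\,dt\le C S_0''$ (obtained in the paper by multiplying \eqref{xxx1} by $\bar\rho$ before combining, cf.\ \eqref{c5} and \eqref{c9}): with only the unweighted bound $\int_0^T\|\nabla\theta\|_{L^2}^2\,dt\le C S_0''$ that you state, your comparison $(\bar\rho^2+\bar\rho)\le C\big(1+\bar\rho+\tfrac{1}{\bar\rho}\big)(\bar\rho+1)$ leaves an uncancelled factor $(\bar\rho+1)$ in the density--temperature term for large $\bar\rho$.
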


\begin{proof}
Multiplying \eqref{x0} by $2C(\bar{\rho}^{2}+\bar{\rho})$ and adding the resultant to \eqref{xx1} yields that
\begin{align}\label{c1}
& \|\rho(t)\|_{L^{3}}+C( \bar{\rho}^{2}+\bar{\rho}) ( \| \sqrt{\rho}u(t)\|_{L^{2}}^{2}+\| b(t)\|_{L^{2}}^{2} )  +C( \bar{\rho}^{2}+\bar{\rho})\int_{0}^{T} (  \| \nabla u\|_{L^{2}}^{2}+\| \nabla b\|_{L^{2}}^{2} ) \, d t \notag\\
&\leq C   \|\rho_{0}\|_{L^{3}}+C( \bar{\rho}^{2}+\bar{\rho}) ( \| \sqrt{\rho_{0}}u_{0}\|_{L^{2}}^{2}+\| b_{0}\|_{L^{2}}^{2} ) \notag \\
& \quad+C(\bar{\rho}^{2}+\bar{\rho})\sup _{ t\in[0, T]}\|\rho(t)\|_{L^{3}}\int_{0}^{T}\|\nabla \theta\|_{L^{2}}^{2} \, d t\, \sup _{ t\in[0, T]}\|\rho(t)\|_{L^{3}}.
\end{align}
Choosing $\epsilon$ in \eqref{p1} sufficiently small such that
\begin{align}\label{sm6}
C(\bar{\rho}^{2}+\bar{\rho})\sup _{t \in[0, T]}\|\rho(t)\|_{L^{3}} \int_{0}^{T}\|\nabla \theta\|_{L^{2}}^{2} d t
\leq \frac{1}{2},
\end{align}
then one deduces from \eqref{c1} that
\begin{align}\label{c3}
	&  \|\rho(t)\|_{L^{3}}+( \bar{\rho}^{2}+\bar{\rho})\big( \| \sqrt{\rho}u(t)\|_{L^{2}}^{2}+\| b(t)\|_{L^{2}}^{2}\big)  +( \bar{\rho}^{2}+\bar{\rho})\int_{0}^{T}\big(  \| \nabla u\|_{L^{2}}^{2}+\| \nabla b\|_{L^{2}}^{2}\big)\, d t \notag\\
	& \leq C  \big(\|\rho_{0}\|_{L^{3}}+( \bar{\rho}^{2}+\bar{\rho}) ( \| \sqrt{\rho_{0}}u_{0}\|_{L^{2}}^{2}+\| b_{0}\|_{L^{2}}^{2} )\big)= C S_{0}^{\prime}.
\end{align}

It follows from \eqref{xxxxx0} and \eqref{xxx1} that
\begin{align}\label{xxx11}
	&  \|\sqrt\rho \theta(t)\|_{L^{2}}^{2}+\|\nabla b(t)\|_{L^{2}}^{2}+\|b(t)\|_{L^{4}}^{4} +\int_{0}^{T}\big( \|b_{t} \|_{L^{2}}^{2}+ \|\nabla^{2} b \|_{L^{2}}^{2}+\||b||\nabla b| \|_{L^{2}}^{2}+\|\nabla \theta\|_{L^{2}}^{2}\big) d t \notag\\
	&\leq C\big( \|\sqrt{\rho_{0}}\theta_{0}\|_{L^{2}}^{2}+\|\nabla b_{0} \|_{L^{2}}^{2}+ \|b_{0} \|_{L^{4}}^{4}\big)+C \sup _{t \in[0, T]}\big(\|\nabla u(t)\|_{L^{2}}^{2}\|\rho(t)\|_{L^{3}}\big) \int_{0}^{T}\|\sqrt{\rho} \dot{u}\|_{L^{2}}^{2} d t\notag\\
	&\quad +C \sup _{t \in[0, T]}\big(\|b(t)\|_{L^{2}} \|\nabla b(t)\|_{L^{2}}\big)\int_{0}^{T}\|\nabla^{2} b\|_{L^{2}}^{2} d t +C  \int_{0}^{T}\|\nabla u\|_{L^{2}}^{2}\|\nabla b\|_{L^{2}}^{4}\, d t\notag\\
	&\quad + C\sup _{t \in[0, T]}(\|b(t)\|_{L^{2}}^{2}\|\nabla u(t)\|_{L^{2}}^{2}) \sup _{t \in[0, T]}\|\nabla u(t)\|_{L^{2}}^{4}\int_{0}^{T}\|\nabla u\|_{L^{2}}^{2}\, d t.
\end{align}
Choosing $\epsilon$ in \eqref{p1} sufficiently small such that, for any $t\in[0,T]$,
\begin{align}\label{sm11}
C\|b(t)\|_{L^{2}} \|\nabla b(t)\|_{L^{2}}\le \frac{1}{16},
\end{align}
one thus infers from \eqref{xxx11} that
\begin{align}\label{xxx12}
& \|\sqrt\rho \theta(t)\|_{L^{2}}^{2}+\|\nabla b(t)\|_{L^{2}}^{2}+\|b(t)\|_{L^{4}}^{4} +\int_{0}^{T}\big( \|b_{t} \|_{L^{2}}^{2}+ \|\nabla^{2} b \|_{L^{2}}^{2}+\||b||\nabla b| \|_{L^{2}}^{2}+\|\nabla \theta\|_{L^{2}}^{2}\big) d t \notag\\
&\leq  C\big(\|\sqrt{\rho_{0}} \theta_{0}\|_{L^{2}}^{2}+\|\nabla b_{0} \|_{L^{2}}^{2}+ \|b_{0} \|_{L^{4}}^{4}\big)+C   \sup _{t \in[0, T]}\big(\|\nabla u(t)\|_{L^{2}}^{2}\|\rho(t)\|_{L^{3}}\big) \int_{0}^{T}\|\sqrt{\rho} \dot{u}\|_{L^{2}}^{2} d t\notag\\
&\quad+ C\sup _{t \in[0, T]}(\|b(t)\|_{L^{2}}^{2}\|\nabla u(t)\|_{L^{2}}^{2}) \sup _{t \in[0, T]}\|\nabla u(t)\|_{L^{2}}^{2}\int_{0}^{T}\|\nabla u\|_{L^{2}}^{2}\, d t\sup _{t \in[0, T]}\|\nabla u(t)\|_{L^{2}}^{2}\notag\\
&\quad +C  \int_{0}^{T}\|\nabla u\|_{L^{2}}^{2}\|\nabla b\|_{L^{2}}^{4}\, d t .
\end{align}
Multiplying \eqref{xxx1} by $\bar{\rho}$ gives that
\begin{align}\label{c5}
	 &\bar{\rho}\|\sqrt\rho \theta(t)\|_{L^{2}}^{2}+\bar{\rho}\int_{0}^{T}\|\nabla \theta\|_{L^{2}}^{2} \, d t\notag\\
	&\leq   C\bar{\rho}\|\sqrt{\rho_{0}} \theta_{0}\|_{L^{2}}^{2}+C\bar{\rho} \sup _{t \in[0, T]}\left(\|\nabla u(t)\|_{L^{2}}^{2}\|\rho(t)\|_{L^{3}}\right) \int_{0}^{T}\|\sqrt{\rho} \dot{u}\|_{L^{2}}^{2}\,  d t\notag\\
	&\quad +C\bar{\rho} \sup _{t \in[0, T]}\big(\|b(t)\|_{L^{2}} \|\nabla b(t)\|_{L^{2}}\big)\int_{0}^{T}\|\nabla^{2} b\|_{L^{2}}^{2} \, d t  +C\bar{\rho}  \int_{0}^{T}\|\nabla u\|_{L^{2}}^{2}\|\nabla b\|_{L^{2}}^{4}\, d t.
\end{align}
Substituting \eqref{xxx12} and \eqref{c5} into \eqref{xxxx0} leads to
\begin{align}\label{c6}
	& \|\nabla u(t)\|_{L^{2}}^{2}+ (\bar{\rho}+1)  \| \sqrt{\rho}\theta(t)\|_{L^{2}}^{2}+\| \nabla b(t)\|_{L^{2}}^{2}+\|  b(t)\|_{L^{4}}^{4} \notag\\
	&\quad +\int_{0}^{T}\left(  \| \sqrt{\rho}\dot{u}\|_{L^{2}}^{2}+  (\bar{\rho}+1)  \| \nabla \theta\|_{L^{2}}^{2}+ \|b_{t} \|_{L^{2}}^{2}+ \|\nabla^{2} b \|_{L^{2}}^{2}+\||b|| \nabla b| \|_{L^{2}}^{2}\right) d t \notag\\
	&\leq C \left(\|\nabla u_{0}\|_{L^{2}}^{2}+ (\bar{\rho}+1)  \| \sqrt{\rho_{0}}\theta_{0}\|_{L^{2}}^{2}+\| \nabla b_{0}\|_{L^{2}}^{2}+\|  b_{0}\|_{L^{4}}^{4}\right)\notag\\
	&\quad +C (\bar{\rho}+1)  \sup _{t \in[0, T]}\big(\|\nabla u(t)\|_{L^{2}}^{2}\|\rho(t)\|_{L^{3}}\big) \int_{0}^{T}\|\sqrt{\rho} \dot{u}\|_{L^{2}}^{2} \, d t\notag\\
	&\quad + C\sup _{t \in[0, T]}(\|b(t)\|_{L^{2}}^{2}\|\nabla u(t)\|_{L^{2}}^{2}) \sup _{t \in[0, T]}\|\nabla u(t)\|_{L^{2}}^{2}\int_{0}^{T}\|\nabla u\|_{L^{2}}^{2}\, d t\sup _{t \in[0, T]}\|\nabla u(t)\|_{L^{2}}^{2}\notag\\
	 &\quad +C (\bar{\rho}+1) \int_{0}^{T}\|\nabla u\|_{L^{2}}^{2}\|\nabla b\|_{L^{2}}^{4}\, d t+C \sup _{t \in[0, T]} K_{1}(t)  \int_{0}^{T}\|\nabla \theta\|_{L^{2}}^{2} d t \notag\\
	 &\quad  +C\bar{\rho} \sup _{t \in[0, T]}\big(\|b(t)\|_{L^{2}} \|\nabla b(t)\|_{L^{2}}\big)\int_{0}^{T}\|\nabla^{2} b\|_{L^{2}}^{2} \, d t+C\int_{0}^{T} K_{2}(t) d t\notag\\
	&\leq C  \left(\|\nabla u_{0}\|_{L^{2}}^{2}+(\bar{\rho}+1) \| \sqrt{\rho_{0}}\theta_{0}\|_{L^{2}}^{2}+\| \nabla b_{0}\|_{L^{2}}^{2}+\|  b_{0}\|_{L^{4}}^{4}\right)+C\int_{0}^{T} K_{2}(t) d t\notag\\
	&\quad +C (\bar{\rho}+1) \int_{0}^{T}\|\nabla u\|_{L^{2}}^{2}\|\nabla b\|_{L^{2}}^{4}\, d t,
\end{align}
where $\epsilon$ in \eqref{p1} is so small that, for any $t\in[0,T]$,
\begin{align}\label{sm7}
C\max\big\{&(\bar{\rho}+1)  \|\nabla u(t)\|_{L^{2}}^{2}\|\rho(t)\|_{L^{3}},~\bar{\rho}\|b(t)\|_{L^{2}}\|\nabla b(t)\|_{L^{2}},~\|  b(t)\|_{L^{2}}^{2}\|\nabla u(t)\|_{L^{2}}^{2},\notag\\
&\sup _{t \in[0, T]}\|\nabla u(t)\|_{L^{2}}^{2}\int_{0}^{T}\|\nabla u\|_{L^{2}}^{2}\, d t\big\}\leq \frac{1}{16}.
\end{align}
In addition, noting that
\begin{align*}
C (\bar{\rho}+1) \int_{0}^{T}\|\nabla u\|_{L^{2}}^{2}\|\nabla b\|_{L^{2}}^{4}\, d t\leq C (\bar{\rho}+1)\sup _{t \in[0, T]} \|\nabla b(t)\|_{L^{2}}^{2} \int_{0}^{T}\|\nabla u\|_{L^{2}}^{2}\, d t \sup _{t \in[0, T]} \|\nabla b(t)\|_{L^{2}}^{2},
\end{align*}
and
\begin{align*}
	&\int_{0}^{T} K_{2}(t)\, dt\\
	&=C\int_{0}^{T} \Big(  \bar{\rho}\|b\|_{L^{2}}\|\nabla b\|_{L^{2}}\|b_{t}\|_{L^{2}}^{2} + \|\rho\|_{L^{3}}\|\nabla u\|_{L^{2}}^{2}\||b|| \nabla b |\|_{L^{2}}^{2}+  \bar{\rho}\|b\|_{L^{2}}\|\nabla b\|_{L^{2}}\|\nabla^{2} b\|_{L^{2}}^{2} +  \bar{\rho}  \|\nabla u\|_{L^{2}}^{2}\|\nabla b\|_{L^{2}}^{4}  \\
	&\quad +  \|\rho\|_{L^{3}}\|\nabla u\|_{L^{2}}^{2}\||b||\nabla b |\|_{L^{2}}^{2}+ \bar{\rho}\|b\|_{L^{2}}^{\frac{1}{2}}\|\nabla b\|_{L^{2}}^{\frac{1}{2}}\|b_{t}\|_{L^{2}}\|\nabla \theta\|_{L^{2}}+ \|b\|_{L^{2}}^{\frac{1}{2}}\|\nabla b\|_{L^{2}}^{\frac{1}{2}}\|b_{t}\|_{L^{2}}\||b||\nabla b|\|_{L^{2}}   \\
	&\quad +  \|\nabla u\|_{L^{2}}\|\nabla b\|_{L^{2}}^{2}\||b|| \nabla b|  \|_{L^{2}}+ \bar{\rho}\|\nabla u\|_{L^{2}}\|\nabla \theta\|_{L^{2}}\|\nabla b\|_{L^{2}}^{2}+\| b\|_{L^{2}}^{\frac{1}{2}}\| \nabla b\|_{L^{2}}^{\frac{1}{2}}\| \nabla^{2}b\|_{L^{2}}\||b||\nabla b | \|_{L^{2}}   \\
	&\quad +   \bar{\rho}^{\frac{1}{2}}\| \rho \|_{L^{3}}^{\frac{1}{2}} \|\nabla u\|_{L^{2}}\|\nabla \theta\|_{L^{2}}\||b||\nabla b | \|_{L^{2}}+\|\nabla \theta\|_{L^{2}}\| b \|_{L^{2}}^{\frac{1}{2}} \| \nabla b \|_{L^{2}}^{\frac{1}{2}} \| \nabla^{2} b \|_{L^{2}}+   \|b\|_{L^{2}} \|\nabla b\|_{L^{2}} \|\nabla^{2} b\|_{L^{2}}^{2}   \Big)\, dt
\end{align*}
can be absorbed by the left-hand side of \eqref{c6} provided $\epsilon$ in \eqref{p1} is chosen so small that, for any $t\in[0,T]$,
\begin{align}\label{sm9}
C\max\Big\{ \|\rho(t) \|_{L^{3}} \|\nabla u(t)\|_{L^{2}}^{2},~(\bar{\rho}+1)  \|\nabla b(t)\|_{L^{2}} \|b(t)\|_{L^{2}},~(\bar{\rho}+1)\sup _{t \in[0, T]} \|\nabla b(t)\|_{L^{2}}^{2} \int_{0}^{T}\|\nabla u\|_{L^{2}}^{2}\, d t\Big\}  \leq \frac{1}{16}.
\end{align}
Consequently, one obtains that
\begin{align}\label{c9}
&\|\nabla u(t)\|_{L^{2}}^{2}+(\bar{\rho}+1) \| \sqrt{\rho}\theta(t)\|_{L^{2}}^{2}+\| \nabla b(t)\|_{L^{2}}^{2}+\|  b(t)\|_{L^{4}}^{4}\notag\\ &\quad+\int_{0}^{T}\big(  \| \sqrt{\rho}\dot{u}\|_{L^{2}}^{2}+ (\bar{\rho}+1)\| \nabla \theta\|_{L^{2}}^{2}+ \|b_{t} \|_{L^{2}}^{2}+ \|\nabla^{2} b \|_{L^{2}}^{2}+\||b||\nabla b| \|_{L^{2}}^{2}\big) \, d t \notag\\
&  \leq C  \big(\|\nabla u_{0}\|_{L^{2}}^{2}+(\bar{\rho}+1) \| \sqrt{\rho_{0}}\theta_{0}\|_{L^{2}}^{2}+\| \nabla b_{0}\|_{L^{2}}^{2}+\|  b_{0}\|_{L^{4}}^{4}\big)= C S_{0}^{\prime\prime}.
\end{align}
Multiplying \eqref{c9} by $\big(1+\bar{\rho}+\frac{1}{\bar{\rho}}\big)$ and using \eqref{c3}, we arrive at
\begin{align*}
&\bigg(1+\bar{\rho}+\frac{1}{\bar{\rho}}\bigg)S_{t}^{1} S_{t}^{2}+( \bar{\rho}^{2}+\bar{\rho})\sup _{t \in[0, T]}\|\rho(t)\|_{L^{3}} \int_{0}^{T}\|\nabla \theta\|_{L^{2}}^{2} d t\notag\\
&\quad +(\bar{\rho}+1)\sup _{t \in[0, T]} \|\nabla b(t)\|_{L^{2}}^{2} \int_{0}^{T}\|\nabla u\|_{L^{2}}^{2}\, d t
+\sup _{t \in[0, T]}\|\nabla u(t)\|_{L^{2}}^{2}\int_{0}^{T}\|\nabla u\|_{L^{2}}^{2}\, d t\notag\\
&\leq C \bigg(1+\bar{\rho}+\frac{1}{\bar{\rho}}\bigg) S_{0}^{\prime} S_{0}^{\prime\prime}= C \epsilon_{0}\leq \frac{3}{2} \epsilon,
\end{align*}
provided $\epsilon \geq \frac{2}{3}C \epsilon_{0}\triangleq C_{1}\epsilon_{0}$. This completes the proof of Corollary \ref{co1}.
\end{proof}

\begin{corollary}\label{co2}
Under the hypotheses of Proposition \ref{pro}, it holds that, for any  $(x, t) \in \mathbb{R}^{3} \times[0, T]$,
\begin{align*}
\rho(x, t) \leq \frac{3}{2} \bar{\rho}.
\end{align*}

\end{corollary}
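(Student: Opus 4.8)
\emph{Proposal.} The plan is to control $\rho$ along particle trajectories, using the effective viscous flux to replace $\operatorname{div} u$ by a quantity whose accumulation in time is governed by the dissipation integrals already bounded in Corollary \ref{co1}. Let $X(t;x)$ denote the flow of $u$, i.e.\ $\frac{d}{dt}X(t;x)=u(X(t;x),t)$ with $X(0;x)=x$. Along $X$ the mass equation $\eqref{1}_1$ becomes $\frac{d}{dt}\log\rho(X(t;x),t)=-\operatorname{div} u$. Writing $(2\mu+\lambda)\operatorname{div} u=F+P+\tfrac12|b|^2$ with $P=\rho\theta\ge 0$ and discarding the two nonnegative contributions, I obtain the one-sided bound
\[
\frac{d}{dt}\log\rho(X(t;x),t)\le-\frac{1}{2\mu+\lambda}F(X(t;x),t),
\]
so that, after integration in time and $\rho_0\le\bar\rho$,
\[
\rho(X(t;x),t)\le\bar\rho\,\exp\Big(\frac{1}{2\mu+\lambda}\int_0^t(-F)(X(s;x),s)\,ds\Big).
\]
Everything then reduces to showing that the accumulated flux $\sup_{x}\int_0^t(-F)(X(s;x),s)\,ds$ does not exceed $(2\mu+\lambda)\log\tfrac32$.

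First I would record that $F$ is \emph{not} directly available in $L^\infty$ under the present estimates: $\eqref{eq2}$ and $\eqref{eq1}$ only give $F\in L^3$ with $\nabla F\in L^2$, which does not embed into $L^\infty$ in three dimensions, so the accumulated flux cannot be bounded by $\int_0^T\|F\|_{L^\infty}\,dt$ and a pointwise estimate along a single trajectory is insufficient. Instead I would exploit the same structure as in Lemma \ref{lem2}: testing the flux identity $\eqref{xx2}$ against $\rho^{p}$ and using the transport identity $\partial_t\rho^{p}+\operatorname{div}(u\rho^{p})+(p-1)\rho^{p}\operatorname{div} u=0$, the time-derivative term organizes into $\frac{d}{dt}\int\rho^{p}(-\Delta)^{-1}\operatorname{div}(\rho u)$ plus commutators, exactly as for $p=3$. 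The boundary contribution is controlled by $\eqref{xu0}$ and absorbed through the smallness already imposed in $\eqref{sm1}$, while the pressure and magnetic Riesz terms are estimated in $L^{3}$ and $L^{6}$ (where singular integrals are bounded) rather than in $L^\infty$. The outcome should be a differential inequality with leading coefficient $1$ on $\|\rho_0\|_{L^{p}}$ whose integrated forcing is dominated by $(\bar\rho^2+\bar\rho)\int_0^T(\|\nabla u\|_{L^2}^2+\|\nabla b\|_{L^2}^2)\,dt$; by $\eqref{c3}$ this quantity is $\le CS_{0}'\le C\epsilon$, hence the forcing is $O(\epsilon)$.

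The main obstacle I anticipate is keeping the constants \emph{uniform in $p$}, which is essential for passing to the limit $p\to\infty$ (using $\|\rho_0\|_{L^{p}}\to\bar\rho$ and $\|\rho(t)\|_{L^{p}}\to\|\rho(t)\|_{L^\infty}$). The nonlinear flux terms generate higher norms $\|\rho\|_{L^{cp}}$ with $c>1$, which must be interpolated against the a priori bound $\rho\le 2\bar\rho$ from $\eqref{p1}$; this interpolation naively produces factors like $(2\bar\rho)^{\theta p}$, and the delicate point is to organize the estimate so that such $p$-dependent factors cancel against the corresponding powers of $\|\rho\|_{L^{p}}$ and do not survive the limit. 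Granting this uniform control, the resulting bound reads $\|\rho(t)\|_{L^\infty}\le\bar\rho\,(1+C\epsilon)$, equivalently the accumulated flux in the characteristic formulation is $\le C\epsilon$; choosing $\epsilon\le\epsilon_2$ small enough that $C\epsilon\le\tfrac12$ then yields $\rho\le\tfrac32\bar\rho$ on $\mathbb{R}^{3}\times[0,T]$, as asserted in Corollary \ref{co2}.
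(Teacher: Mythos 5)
There is a genuine gap. You correctly set up the Lagrangian formulation, correctly reduce the problem to bounding $\int_0^t(-F)(X(s;x),s)\,ds$, and correctly diagnose the central obstruction: $F$ is only controlled in $L^3\cap D^{1,2}$, so the accumulated flux along a trajectory cannot be bounded by $\int_0^T\|F\|_{L^\infty}\,dt$. But your proposed resolution --- redoing Lemma \ref{lem2} with $\rho^{p}$ in place of $\rho^{3}$ and sending $p\to\infty$ --- is abandoned exactly at its decisive step. You yourself flag that the nonlinear terms generate $p$-dependent interpolation factors and then write ``granting this uniform control,'' which is the whole difficulty. Worse, there is a structural obstruction you do not mention: the renormalization $\partial_t\rho^{p}+\operatorname{div}(u\rho^{p})+(p-1)\rho^{p}\operatorname{div} u=0$ converts the term $(2\mu+\lambda)\operatorname{div} u\,\rho^{p}$ into a total derivative with coefficient $\tfrac{2\mu+\lambda}{p-1}$, which degenerates as $p\to\infty$, whereas the competing weight $(-\Delta)^{-1}\operatorname{div}(\rho u)$ is small but not sign-definite and does \emph{not} shrink with $p$. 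Thus the absorption that works at $p=3$ (via \eqref{sm1}) has no uniform-in-$p$ analogue, and the limit $p\to\infty$ cannot be taken along the lines you sketch without a substantially new idea.

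The paper's proof resolves the $L^\infty$ issue differently and completely: it substitutes the momentum equation into $F$ \emph{along the trajectory}, obtaining the representation \eqref{s2},
\begin{align*}
F(X(\tau;x,t),\tau)=-\frac{d}{d\tau}\big[(-\Delta)^{-1}\operatorname{div}(\rho u)\big]+[u_{i},R_{ij}](\rho u_{j})+(-\Delta)^{-1}\operatorname{div}\operatorname{div}(b\otimes b),
\end{align*}
where the first term is a material derivative. Its time integral therefore telescopes into a boundary term controlled pointwise by \eqref{xu0}, while the remaining commutator $[u_{i},R_{ij}](\rho u_{j})$ and the magnetic Riesz term \emph{are} in $L^\infty$ (by the commutator estimate and Gagliardo--Nirenberg, respectively), with time integrals of size $[(1+\bar{\rho}+\tfrac{1}{\bar{\rho}})S_{0}^{\prime}S_{0}^{\prime\prime}]^{1/2}$. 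This is the classical Hoff / Huang--Li--Xin device, and it is the missing ingredient your argument needs; without it (or a genuinely completed $L^{p}\to L^{\infty}$ iteration), the proposal does not establish $\rho\le\tfrac32\bar\rho$.
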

\begin{proof}
Fix $(x,t) \in \mathbb{R}^{3} \times [0,T]$ and a constant $\delta>0$, we set
\begin{align*}
\rho^{\delta}(y, s)=\rho(y, s)+\delta \exp \left\{-\int_{0}^{s} \operatorname{div} u(X(\tau ; x, t), \tau) d \tau\right\}>0,
\end{align*}
where  $X(s ; x, t)$  is the flow map defined by
\begin{align*}
	\begin{cases}
	\frac{d}{d s} X(s ; x, t)=u(X(s ; x, t), s), ~~0 \leq s<t, \\
	X(t ; x, t)=x.
\end{cases}
\end{align*}
Using the fact that
\begin{align*}
\frac{d}{d s} (f(X(s ; x, t), s)= (f_{s}+u \cdot \nabla f )(X(s ; x, t), s),
\end{align*}
one obtains from \eqref{1}$_{1}$ that
\begin{align*}
\frac{d}{d s} \rho^{\delta}(X(s ; x, t), s)+\rho^{\delta}(X(s ; x, t), s) \operatorname{div} u(X(s ; x, t), s)=0.
\end{align*}
This implies that
\begin{align}\label{xu1}
Y^{\prime}(s)=g(s)+b^{\prime}(s),
\end{align}
where
\begin{gather}
	Y(s)=\ln \rho^{\delta}(X(s ; x, t), s), ~~ g(s)=-\frac{P(X(s ; x, t), s)}{2 \mu+\lambda},\notag\\
		b(s)=-\frac{1}{2 \mu+\lambda} \int_{0}^{s}\Big(\frac{1}{2}|b(X(\tau ; x, t), \tau)|^{2}+F(X(\tau ; x, t), \tau)\Big) d \tau\label{s1}.
\end{gather}

A direct computation yields that
\begin{align}\label{s2}
F(X(\tau ; x, t), \tau)= -\frac{d}{d \tau}\big[(-\Delta)^{-1} \operatorname{div}(\rho u)\big]+ [u_{i}, R_{i j} ] (\rho u_{j} )+(-\Delta)^{-1} \operatorname{div} \operatorname{div}(b \otimes b),
\end{align}
where  $ [u_{i}, R_{i j} ]=u_{i} R_{i j}-R_{i j} u_{i}$ and  $R_{i j}=\partial_{i}(-\Delta)^{-1} \partial_{j}$  denotes the Riesz transform on  $\mathbb{R}^{3}$. Hence, \eqref{s1} and \eqref{s2} give that
\begin{align}\label{s3}
	b(t)-b(0) \leq  & C\sup _{s \in[0, T]}  \|(-\Delta)^{-1} \operatorname{div}(\rho u)(s) \|_{L^{\infty}} +C\int_{0}^{T}\|b\|_{L^{\infty}}^{2} \mathrm{~d} \tau\notag\\
	& +C\int_{0}^{T} \| [u_{i}, R_{i j} ] (\rho u_{j} )\|_{L^{\infty}} d \tau+C \int_{0}^{T} \|(-\Delta)^{-1} \operatorname{div} \operatorname{div}(b \otimes b) \|_{L^{\infty}} d \tau
	\triangleq\sum_{i=1}^{4} N_{i}.
\end{align}

For $N_{1}$, \eqref{xu0} implies that
\begin{align}\label{xu3}
N_{1} \leq C \bar{\rho}^{\frac{3}{4}}\|\sqrt{\rho} u\|_{L^{2}}^{\frac{1}{2}}\|\nabla u\|_{L^{2}}^{\frac{1}{2}}.
\end{align}
Using Gagliardo--Nirenberg inequality, H\"{o}lder's inequality, the commutator estimates,  \eqref{c9}, and \eqref{c3}, we have
\begin{align}
	N_{2} &\leq C  \int_{0}^{T} \|b\|_{L^{\infty}}^{2} d \tau\notag\\
	      &\leq C \int_{0}^{T} \bar{\rho}^{\frac{1}{2}} \|\nabla b\|_{L^{2}}  \bar{\rho}^{-\frac{1}{2}}\|\nabla^{2} b \|_{L^{2}} d \tau\notag\\
	      &\leq  C \bigg(\bar{\rho} \int_{0}^{T}\|\nabla b\|_{L^{2}}^{2} d \tau\bigg)^{\frac{1}{2}}\bigg(\int_{0}^{T}\bar{\rho}^{-1}\|\nabla^{2} b\|_{L^{2}}^{2} d \tau\bigg)^{\frac{1}{2}}\notag\\
	      &\leq C \bigg[\bigg(1+\bar{\rho}+\frac{1}{\bar{\rho}}\bigg) S_{0}^{\prime} S_{0}^{\prime\prime}\bigg]^{\frac{1}{2}},\label{xu9}\\
N_{3}&\leq C\int_{0}^{T} \| [u^{i}, R_{i j} ] (\rho u^{j} ) \|_{L^{\infty}} d \tau\notag\\
	 &\leq C\int_{0}^{T}\|[u, R_{i j}](\rho u)\|_{L^{3}}^{\frac{1}{5}}\|\nabla[u, R_{i j}](\rho u)\|_{L^{4}}^{\frac{4}{5}}d \tau \notag\\
	 &\leq C \int_{0}^{T} \bar{\rho}\|\nabla u\|_{L^{2}}\|\nabla u\|_{L^{6}} d \tau \notag\\
	 &\leq  C \int_{0}^{T} \bar{\rho}\|\nabla u\|_{L^{2}}\Big(\bar{\rho}^{\frac{1}{2}}\|\sqrt{\rho} \dot{u}\|_{L^{2}}+\bar{\rho}\|\nabla \theta\|_{L^{2}}+\||b||\nabla b|\|_{L^{2}}\Big) d \tau \notag\\
	 &\leq   C \bar{\rho}\bigg(\int_{0}^{T}\|\nabla u\|_{L^{2}}^{2} d \tau\bigg)^{\frac{1}{2}}\bigg(\int_{0}^{T} \bar{\rho}\|\sqrt{\rho} \dot{u}\|_{L^{2}}^{2} d \tau\bigg)^{\frac{1}{2}} \notag\\
	 &\quad+ C \bar{\rho}^{2}\bigg(\int_{0}^{T}\|\nabla u\|_{L^{2}}^{2} d \tau\bigg)^{\frac{1}{2}}\bigg(\int_{0}^{T}\|\nabla \theta\|_{L^{2}}^{2} d \tau\bigg)^{\frac{1}{2}}+ C \bar{\rho}\bigg(\int_{0}^{T}\|\nabla u\|_{L^{2}}^{2} d \tau\bigg)^{\frac{1}{2}}\bigg(\int_{0}^{T}\||b|| \nabla b |\|_{L^{2}}^{2} d \tau\bigg)^{\frac{1}{2}}\notag\\
	 &\leq C\bigg[\bigg(1+\bar{\rho}+\frac{1}{\bar{\rho}}\bigg) S_{0}^{\prime} S_{0}^{\prime\prime}\bigg]^{\frac{1}{2}}\label{xu4},\\
	N_{4}&\leq C \int_{0}^{T}  \|(-\Delta)^{-1} \operatorname{div} \operatorname{div}(b \otimes b) \|_{L^{\infty}}  d \tau     \notag\\
	&  \leq C \int_{0}^{T} \|(-\Delta)^{-1} \operatorname{div} \operatorname{div}(b \otimes b) \|_{L^{6}}^{\frac{1}{2}}  \|\nabla (-\Delta)^{-1} \operatorname{div} \operatorname{div}(b \otimes b) \|_{L^{6}}^{\frac{1}{2}}   d \tau\notag\\
	&\leq C \int_{0}^{T}\||b||\nabla b|\|_{L^{2}}^{\frac{1}{2}}\||b||\nabla b|\|_{L^{6}}^{\frac{1}{2}}  d \tau\notag\\
	&\leq C \int_{0}^{T}\|b\|_{L^{\infty}}\|\nabla b\|_{L^{2}}^{\frac{1}{2}}\|\nabla^{2} b\|_{L^{2}}^{\frac{1}{2}}  d \tau\notag\\
	&\leq C \bigg[\bigg(1+\bar{\rho}+\frac{1}{\bar{\rho}}\bigg) S_{0}^{\prime} S_{0}^{\prime\prime}\bigg]^{\frac{1}{2}}\label{xu5}.
\end{align}

Integrating \eqref{xu1} over $[0, t]$ and using \eqref{s3}--\eqref{xu5}, one derives that
\begin{align}\label{s4}
	\ln \rho^{\delta}(x, s) \leq & \ln (\bar{\rho}+\delta)+b(t)-b(0) \notag\\
	\leq & \ln (\bar{\rho}+\delta)+ C \bar{\rho}^{\frac{3}{4}}\|\sqrt{\rho} u\|_{L^{2}}^{\frac{1}{2}}\|\nabla u\|_{L^{2}}^{\frac{1}{2}}+C \bigg[\bigg(1+\bar{\rho}+\frac{1}{\bar{\rho}}\bigg) S_{0}^{\prime} S_{0}^{\prime\prime}\bigg]^{\frac{1}{2}}\notag\\
	\leq& \ln (\bar{\rho}+\delta) +\ln \frac{3}{2},
\end{align}
where the smallness of $\epsilon$ in \eqref{zui} has been used in the last step. Letting $\delta\to 0$ in \eqref{s4} yields that
\begin{align*}
\rho \leq \frac{3}{2} \bar{\rho}.
\end{align*}
for all $(x,t)\in\mathbb{R}^{3}\times[0,T]$. This completes the proof of Corollary \ref{co2}.
\end{proof}

\begin{proof}[Proof of Theorem \ref{thm1}.]
Recall that a Serrin-type blow-up criterion for strong solutions obtained in \cite{HL13}. More precisely,
\begin{align}\label{blow}
\lim _{T \rightarrow T^{*}}\left(\|\rho\|_{L^{\infty}\left(0, T ; L^{\infty}\right)}+\|u\|_{L^{s}\left(0, T ; L^{r}\right)}\right)=\infty,
\quad \frac{2}{s}+\frac{3}{r} \leq 1
\end{align}
with exponents $r$ and $s$ satisfying $s>1$, $3<r \leq \infty$, where $T^{*}$ denotes the maximal existence time of the strong solution.

According to the local existence theory in Lemma \ref{local}, there exists a unique local strong solution $(\rho, u, \theta, b)$ to the problem \eqref{1}--\eqref{5}. Denote by $T_{\max}>0$ its maximal existence time. Combining Proposition \ref{pro} with the blow-up criterion \eqref{blow} leads to the global existence and uniqueness of strong solutions. Indeed, the uniform upper bound for $\rho$ has been obtained in Corollary \ref{co2}. Moreover, taking $s=4$ and $r=6$ in \eqref{blow} and using \eqref{c3} and \eqref{c9}, there exists a constant $\bar{C}>0$ independent of $T_{\max}$ such that
\begin{align}\label{blow1}
\sup _{t \in[0, T_{\max}]} \|\rho\|_{L^{\infty}}+\int_{0}^{T_{\max }}\|u\|_{L^{6}}^{4}dt \leq \sup _{t \in[0, T_{\max}]} \|\rho\|_{L^{\infty}}+C\sup _{t \in[0, T_{\max}]}\|\nabla u(t)\|_{L^{2}}^{2} \int_{0}^{T_{\max }}\|\nabla u\|_{L^{2}}^{2}dt \leq \bar{C}.
\end{align}
Thus, \eqref{blow} and \eqref{blow1} imply that $T_{\max }=\infty$. Consequently, we can extend the local strong solution to be a global one.
\end{proof}

\section*{Conflict of interest}
The authors have no conflicts to disclose.

\section*{Data availability}
No data was used for the research described in the article.

\end{document}